\theoremstyle{plain}
\numberwithin{equation}{section}
\newtheorem{thm}{Theorem}[section]
\newtheorem{theorem}[thm]{Theorem}
\newtheorem{lemma}[thm]{Lemma}
\newtheorem{corollary}[thm]{Corollary}
\newtheorem{example}[thm]{Example}
\newtheorem{remark}[thm]{Remark}
\newtheorem{definition}[thm]{Definition}
\newtheorem{proposition}[thm]{Proposition}
\renewcommand{\epsilon}{\varepsilon}
\renewcommand{\phi}{\varphi}
\def\PV{\mathcal{P}}
\def\QV{\mathcal{Q}}
\def\PVW{\mathcal{P}_{w}}
\def\pp{\ldotp\ldotp}
\def\cd3#1{\textbf{\textsf{#1}}}
\def\sa#1{\cd3{#1}}
\def\lp{\textit{lp}}
\def\act{A}
\begin{document}


\setcounter{page}{1}

\title[Abelian Powers and Repetitions in Sturmian Words]{Abelian Powers and Repetitions in Sturmian Words}

\author[G. Fici]{Gabriele Fici}
\address[G. Fici]{Dipartimento di Matematica e Informatica\\
                Universit\`a di Palermo\\
                Italy}
\email[Corresponding author]{gabriele.fici@unipa.it}
\thanks{Some of the results in this paper have been presented without proofs at the 17th International Conference on Developments in Language Theory, DLT 2013 \cite{FiLaLeLeMiPG13}.}

\author[A. Langiu]{Alessio Langiu}
\address[A. Langiu]{ Department of Informatics\\
   King's College London\\
   UK}
\email{alessio.langiu@kcl.ac.uk}

\author[T. Lecroq]{Thierry Lecroq}
\address[T. Lecroq, A. Lefebvre, \'E. Prieur-Gaston]{LITIS EA4108, FR CNRS 3638 Normastic, Normandie Universit\'e\\ 
	Universit\'e de Rouen\\
	 France }
\email{\{thierry.lecroq,arnaud.lefebvre,elise.prieur\}@univ-rouen.fr}

\author[A. Lefebvre]{Arnaud Lefebvre}

\author[F. Mignosi]{Filippo Mignosi}
\address[F. Mignosi]{DISIM\\ Universit\`a dell'Aquila\\ 
   Italy}
\email{filippo.mignosi@di.univaq.it}

\author[J. Peltom\"{a}ki]{Jarkko Peltom\"{a}ki}
\address[J. Peltom\"{a}ki]{Turku Centre for Computer Science, TUCS\\Department of Mathematics and Statistics\\University of Turku, Finland}
\email{jspelt@utu.fi}

\author[\'E. Prieur]{\'Elise Prieur-Gaston}

\maketitle

\begin{abstract}
 Richomme, Saari and Zamboni (J.~Lond.~Math.~Soc.~83:~79--95,~2011) proved that at every position of a Sturmian word starts an  abelian power of exponent $k$ for every $k > 0$. We improve on this result by studying the maximum exponents of abelian powers and abelian repetitions (an abelian repetition is an analogue of a fractional power) in Sturmian words. We give a formula for computing the maximum exponent of an abelian power of abelian period $m$ starting at a given position in any Sturmian word of rotation angle $\alpha$. By considering all possible abelian periods $m$, we recover the result of Richomme, Saari and Zamboni.

As an analogue of the critical exponent, we introduce the abelian critical exponent $\act(s_\alpha)$ of a Sturmian word $s_\alpha$ of angle $\alpha$ as the quantity $\act(s_\alpha) = \limsup k_{m}/m=\limsup k'_{m}/m$, where $k_{m}$ (resp. $k'_{m}$) denotes the maximum exponent of an abelian power (resp.~of an abelian repetition) of abelian period $m$ (the superior limits coincide for Sturmian words). We show that $\act(s_\alpha)$ equals the Lagrange constant of the number $\alpha$. This yields a formula for computing $\act(s_\alpha)$ in terms of the partial quotients of the continued fraction expansion of $\alpha$. Using this formula, we prove that $\act(s_\alpha) \geq \sqrt{5}$ and that the equality holds for the Fibonacci word. We further prove that $\act(s_\alpha)$ is finite if and only if $\alpha$ has bounded partial quotients, that is, if and only if $s_{\alpha}$ is $\beta$-power-free for some real number $\beta$.

Concerning the infinite Fibonacci word, we prove that: \emph{i}) The longest prefix that is an abelian repetition of period $F_j$, $j>1$, has length $F_j( F_{j+1}+F_{j-1} +1)-2$ if $j$ is even or $F_j( F_{j+1}+F_{j-1} )-2$ if $j$ is odd, where $F_{j}$ is the $j$th Fibonacci number; \emph{ii}) The minimum abelian period of any factor is a Fibonacci number. 
Further, we derive a formula for the minimum abelian periods of the finite Fibonacci words: we prove that for $j\geq 3$ the Fibonacci word $f_j$, of length $F_{j}$, has minimum abelian period equal to $F_{\lfloor{j/2}\rfloor}$ if $j = 0, 1, 2\mod{4}$ or to $F_{1+\lfloor{j/2}\rfloor}$  if  $ j  = 3\mod{4}$.
\end{abstract}

\section{Introduction}

Sturmian words are infinite words having exactly $n+1$ distinct factors of each length $n\geq 0$. By the celebrated theorem of Morse and Hedlund \cite{MoHe38}, they are the aperiodic binary words with minimal factor complexity. Every Sturmian word is characterized by an irrational number $\alpha$ and a real number $\rho$ called the \emph{angle} and the \emph{initial point} respectively. The Sturmian word $s_{\alpha,\rho}$ is defined by rotating the point $\rho$ by the angle $\alpha$ in the torus $I=\mathbb{R}/\mathbb{Z}=[0,1)$ and by writing a letter $\sa{b}$ when the point falls in the interval $[0,1-\alpha)$ and a letter $\sa{a}$ when the point falls in the complement. The Fibonacci word $f=\sa{abaababaabaababaabab}\cdots$ is a well-known Sturmian word obtained by taking both the angle and the initial point equal to $\phi-1$, where $\phi=(1+\sqrt{5})/2$ is the Golden Ratio. The Fibonacci word $f$ is also the limit of the sequence of finite Fibonacci words $f_{n}$, defined by $f_{0}=\sa{b}$, $f_{1}=\sa{a}$ and $f_{j}=f_{j-1}f_{j-2}$ for every $j>1$, that are the natural counterpart of the Fibonacci numbers in the setting of words. 

Sturmian words have several equivalent definitions and a lot of combinatorial properties that make them well-studied objects in discrete mathematics and theoretical computer science. 
In fact, there exists a huge bibliography on Sturmian words (see for instance the survey papers 
\cite{BerstelRecent07,Berstel-Reutenauersurvey}, \cite[Chap. 2]{lothaire-book:2002}, 
\cite[Chap. 6]{Pitheasfogg} and the references therein). 

There are mainly two approaches to Sturmian words: one is purely combinatorial, while the other uses techniques from elementary number theory to derive correspondences between the finer arithmetic properties of the irrational $\alpha$ and the factors of the Sturmian words of angle $\alpha$. In the language of computer science, such correspondences are called \emph{semantics}. In this paper, we aim at building upon such approach by showing new semantics allowing us to give new and tight results on the abelian combinatorics of Sturmian words. Indeed, this approach extends to the abelian setting the well-known fruitful semantics that in the last decades have allowed researchers to derive deep and important results on the combinatorics of infinite words from the theory of codings of rotations and continued fractions of irrationals. Interestingly these semantics also allowed researchers to shed new light on consolidated theories by exploiting the opposite direction. A remarkable example of this is represented by the work of B. Adamczewski and Y. Bugeaud \cite{AB1,AB2}.

Concerning the maximum exponent of repetitions in Sturmian words, there exists a vast bibliography (see for example \cite{Damanik200223,BeHoZa06,Vandeth2000283,Justin2001363,Krieger200770,Be99,CaDel00} and the references therein), which stems from the seminal work on the Fibonacci word presented in \cite{MignosiPirillo}. Indeed, the study of repetitions in words is a classical subject both from the combinatorial and the algorithmic point of view. Repetitions are strictly related to the notion of periodicity. Recall that a word $w$ of length $|w|$ has a \emph{period} $p>0$ if $w_i=w_{i+p}$ for every $1\leq i \leq |w|-p$, where $w_i$ is the letter in the position $i$ of $w$. The \emph{exponent} of $w$ is the ratio $|w|/\pi_w$ between its length $|w|$ and its \emph{minimum period $\pi_w$}. When studying the degree of repetitiveness of a word, we are often interested in the factors whose exponent is at least $2$, called \emph{repetitions}. Repetitions whose exponent is an integer are called \emph{integer powers} since a word $w$ with integer exponent $k\geq 2$ can be written as $w=u^{k}$, i.e., $w$ is the concatenation of $k$ copies of a non-empty word $u$ of length $\pi_w$. If instead $k$ is not an integer, then the word $w$ is a \emph{fractional power}. In this case we can write $w=u^{\lfloor k \rfloor}u'$, where $u'$ is the prefix of $u$ of length $\pi_w(k-\lfloor k \rfloor)$. For example, the word $w=aabaaba$ is a $7/3$-power since it has minimum period $3$ and length $7$.  A good reference on periodicity is \cite[Chap.~8]{lothaire-book:2002}. 

A measure of repetitiveness of an infinite word is given by the supremum of the exponents of its factors, called the \emph{critical exponent} of the word. If this supremum $\beta$ is finite, then the word is said to be $\beta^+$-power-free (or simply $\beta$-power-free if $\beta$ is irrational, so there are no factors with exponent $\beta$). For example, the critical exponent of the Fibonacci word $f$ is $2+\phi$ \cite{MignosiPirillo}, so $f$ is $(2+\phi)$-power-free. In general, a Sturmian word $s_{\alpha,\rho}$ is $\beta$-power-free for some $\beta$ if and only if the continued fraction expansion of $\alpha$ has bounded partial quotients \cite{Mi89}. The critical exponent of $s_{\alpha,\rho}$ can be explicitly determined by a formula involving these partial quotients \cite{CaDel00,Justin2001363,Damanik200223,Pelto}.

Recently, the extension of these notions to the so-called abelian setting has received a lot of interest. 
Abelian properties of words have been studied since 
the very beginning of formal language theory and combinatorics on words. The notion of the Parikh vector of a word (see later for its definition) has become a standard and is often used without an explicit reference to the original 1966 paper by Parikh \cite{Parikh:1966:CL:321356.321364}. Abelian  powers were first considered in 1957 by Erd\H{o}s \cite{Erdos1961221} as a natural generalization of usual integer powers.
Research concerning abelian 
properties of words and languages developed afterwards in 
different directions. 
For instance, there is an increasing interest in 
abelian properties of words linked to periodicity; see, for example, 
\cite{AKP2012,CI2006,CRSZ2010,DR2012,PZ13,Richomme201179,SS2011}.
 
Recall that the Parikh vector $\PV_{w}$ of a finite word $w$ enumerates the total number of each letter of the alphabet in $w$. Therefore, two words have the same Parikh vector if and only if one can be obtained from the other by permuting letters. 
An \emph{abelian decomposition} of a word $w$ is a factorization $w=u_0u_1 \cdots u_{j-1}u_j$, where  $j\geq 2$, the words $u_1$, $\ldots$, $u_{j-1}$ have the same Parikh vector $\PV$ and the Parikh vectors of the words $u_0$ and $u_j$ are contained in $\PV$ (that is, they are component-wise less
or equal to $\PV$ but not equal to $\PV$). The sum $m$ of the components of the Parikh vector $\PV$ (that is, the length of $u_1$) is called an \emph{abelian period} of $w$ (cf.~\cite{CI2006}).  
The words $u_0$ and $u_j$, the first and the last factor of the decomposition, are respectively called the \emph{head} and the \emph{tail} of the abelian decomposition. Notice that different abelian decompositions can give the same abelian period. For example, the word $w=abab$ has an abelian period $2$ with $u_0=\epsilon$ (the empty word), $u_1=u_2=ab$ and $u_3=\epsilon$ or with $u_0=a$, $u_1=ba$ and $u_2=b$.
The \emph{abelian exponent} of $w$ is the ratio $|w|/\mu_w$ between its length $|w|$ and its minimum abelian period $\mu_w$. We say that a word $w$ is an \emph{abelian repetition} of period $m$ and exponent $k$ if it has an abelian decomposition $w=u_0u_1 \cdots u_{j-1}u_j$ with $j\geq 3$ (so that $u_1$ and $u_2$ exist and are nonempty) such that $|u_1| = m$ and $|w|/m = k$. If we are uninterested in the period and exponent, then we simply call $w$ an abelian repetition.
An \emph{abelian power} (also known as a \emph{weak repetition}~\cite{Cummings_weakrepetitions}) is a word $w$ that has an abelian decomposition with empty head and empty tail. Let $m$ be the abelian period of $w$ corresponding to the decomposition. Then we say that the word $w$ is an abelian power of period $m$ and exponent $|w|/m$. If the exponent of $w$ equals $1$, then we say that $w$ is a \emph{degenerated} abelian power of period $m$.

\subsection{Our results}

The main contribution of this paper is the description of a framework that allows us to translate arithmetic properties of rotations of an irrational number in the torus  $I=\mathbb{R}/\mathbb{Z}=[0,1)$ to properties of abelian powers and repetitions in Sturmian words.

In \cite{Mi89} (and in a very preliminary form in \cite{KnuthAMM}) a bijection (that we call \emph{Sturmian bijection}) between factors of Sturmian 
words and subintervals of the torus $I$ is described. 
In the last three decades, the Sturmian bijection has allowed researchers to shed light on the combinatorics of the factors of Sturmian words especially from the point of view of repetitions. Most of the results on maximal repetitions in Sturmian words stem in fact from the Sturmian bijection.

We show in this paper that the Sturmian bijection preserves abelian properties of
factors. In particular, using the Sturmian bijection, we prove that a Sturmian word of rotation angle $\alpha$ contains an abelian power of period $m$ and exponent $k\geq 2$ if and only if $\|m\alpha\|< \frac{1}{k}$, where $\|x\|$ is the distance between $x$ and the nearest integer. As a consequence, the maximum exponent of an abelian power of period $m$ in a Sturmian word of rotation angle $\alpha$ is $\left \lfloor 1/ \|m\alpha\|  \right \rfloor$. Furthermore, we prove that the Sturmian word  $s_{\alpha,\rho}$ of angle $\alpha$ and initial point $\rho$ contains an abelian power of period $m$ and exponent $k\geq 2$ starting at position $n$ if and only if $\{\rho+n\alpha\}<1-k\{m\alpha\}$ or $\{\rho+n\alpha\}>k\{-m\alpha\}$ save for few exceptional positions related to the points $\{\{-rm\alpha\}\colon r \geq 0\}$. We recover the result of \cite{Richomme201179} that abelian powers of arbitrarily large exponent occur at every position of a Sturmian word.

A Sturmian word always contains abelian powers of arbitrarily large exponent, so we cannot define a direct analogue of the critical exponent to the abelian setting. Instead we define the \emph{abelian critical exponent} of a Sturmian word $s_\alpha$ of angle $\alpha$ as the quantity $\act(s_\alpha) = \limsup k_{m}/m=\limsup k'_{m}/m$ where $k_m$ (resp.~$k'_m$) denotes the maximum exponent of an abelian power (resp.~of an abelian repetition) of abelian period $m$ in $s_\alpha$ (in fact, the two superior limits coincide). We show that $\act(s_\alpha)$ equals the \emph{Lagrange constant} of the irrational $\alpha$, a well-known constant in number theory (see Section \ref{sec:approx} for its definition). Via this connection, we determine $\act(s_\alpha)$ in terms of the partial quotients of the continued fraction expansion of $\alpha$. This allows us to prove that $\act(s_\alpha) \geq \sqrt{5}$ for every Sturmian word $s_\alpha$ and that the equality holds for the Fibonacci word. We further prove that $\act(s_\alpha)$ is finite if and only if the continued fraction expansion of $\alpha$ has bounded partial quotients, that is, if and only if $s_{\alpha}$ is $\beta$-power-free for some real number $\beta$.

We finally focus on the particular case of the Fibonacci word $f=s_{\phi-1,\phi-1}$. We prove that in the Fibonacci word the maximum exponent of an abelian power of period $F_j$---the $j$th Fibonacci number---equals $\lfloor \phi F_j + F_{j-1} \rfloor$ and that for every $F_j$, $j>1$, the longest prefix of the Fibonacci word that is an abelian repetition of period $F_j$ has length $F_j( F_{j+1}+F_{j-1} +1)-2$ if $j$ is even and $F_j( F_{j+1}+F_{j-1} )-2$ if $j$ is odd. 
We then prove that the minimum abelian period of any factor of the Fibonacci word is a Fibonacci number; a result analogous to a result of Currie and Saari \cite{CuSa09} concerning ordinary periods.
These results allow us to give an exact formula for the minimum abelian periods of the finite Fibonacci words. More precisely, we prove that for every $j\geq 3$ the Fibonacci word $f_j$, of length $F_{j}$, has minimum abelian period $F_{\lfloor{j/2}\rfloor}$ if $j = 0, 1, 2\mod{4}$ and $F_{1+\lfloor{j/2}\rfloor}$ if $j = 3\mod{4}$.

\medskip

The paper is organized as follows. In Section~\ref{sec:preliminaries} we give the basic definitions and fix the notation. In Section~\ref{sec:Sturmian} we recall needed results on Sturmian words and present connections with the Parikh vectors of their factors. In Section~\ref{sec:ab_pow_and_ab_repet} we give the main results about abelian powers in Sturmian words, while in Section~\ref{sec:approx} we use standard techniques from elementary number theory to study abelian repetitions in Sturmian words and the abelian critical exponent of Sturmian words. In the final Section~\ref{sec:abrepFibo} we deal with the abelian repetitions in the Fibonacci word and abelian periods of its factors.

\section{Preliminaries}\label{sec:preliminaries}

Let $\Sigma=\{a_{1},a_{2},\ldots ,a_{\sigma}\}$ be an ordered alphabet of cardinality $\sigma$, and let $\Sigma^*$ be the set of finite words over $\Sigma$. We let $|w|$ denote the length of the word $w$. The empty word has length $0$ and is denoted by $\epsilon$. We let $w_i$ denote the $i$th letter of $w$ and $w_{i\pp j}$ with $1 \leq i \leq j \leq |w|$ the factor $w_i w_{i+1} \cdots w_j$ of $w$. We say that the factor $w_{i\pp j}$ occurs at position $i$ in $w$. 
We let $|w|_a$ denote the number of occurrences of the letter $a\in\Sigma$ in the word $w$.
An integer $p>0$ is an (ordinary) period of a word $w$ if $w_i=w_{i+p}$ for all $i$ such that $1\leq i \leq |w|-p$.

The \emph{Parikh vector} of a word $w$, denoted by $\PVW$, counts the occurrences of each letter of $\Sigma$ in $w$, i.e., $\PVW=(|w|_{a_{1}},\ldots,|w|_{a_{\sigma}})$. Given the Parikh vector $\PVW$ of a word $w$, $\PVW [i]$ denotes its $i$th component
and $|\PVW|$ its norm (the sum of its components). Thus, for a word $w$ and $i$ such that $1\leq i\leq\sigma$, we have $\PVW [i]=|w|_{a_i}$ and $|\PVW|=\sum_{i=1}^{\sigma}\PVW[i]=|w|$. Given two Parikh vectors $\PV$ and $\QV$, we say that $\PV$ is \emph{contained} in $\QV$ if $|\PV| < |\QV|$ and $\PV[i]\leq \QV[i]$ for every $i$ such that $1\leq i\leq \sigma$. If the Parikh vector $\PV$ is contained in $\QV$, then we simply write $\PV \subset \QV$.

Recall from the introduction that an \emph{abelian decomposition} of a word $w$ is a factorization $w=u_0u_1 \cdots u_{j-1}u_j$ where $j \geq 2$, the words $u_1, u_2, \ldots, u_{j-1}$ have the same Parikh vector $\PV$ and the Parikh vectors of $u_0$ (the \emph{head}) and $u_j$ (the \emph{tail}) are contained in $\PV$. The norm of $\PV$ is an \emph{abelian period} of $w$.
The \emph{abelian exponent} of $w$ is the ratio $|w|/\mu_w$ between its length $|w|$ and its minimum abelian period $\mu_w$.
We say that a word $w$ is an \emph{abelian repetition} if it has an abelian decomposition $w=u_0u_1 \cdots u_{j-1}u_j$ with $j\geq 3$. 
An \emph{abelian power} is a word for which there exists an abelian decomposition with an empty head and an empty tail. An abelian power $w$ is \emph{degenerated} if $j = 2$ in its abelian decomposition $w = u_0u_1 \cdots u_{j-1}u_j$ with an empty head and an empty tail, that is, if and only if the norm of the Parikh vector of $u_1$ equals $|w|$.

\begin{example}
The word $w=abaababa$ is an abelian repetition of minimum period $2$ and abelian exponent $4$ since $w=a\cdot ba\cdot ab \cdot ab \cdot a$. Notice that $w$ is also an abelian repetition of  period $3$ and exponent $8/3$ since $w=\varepsilon \cdot aba\cdot aba \cdot ba$.

If a word $w$ is an abelian power of maximum exponent $k$, then $k$ is not necessarily the abelian exponent of $w$. For
instance, if $w = (baaba)^2$, then $w$ is an abelian power of period $5$ and exponent $2$, but its abelian exponent is
$10/3$ as $w = b \cdot aab \cdot aba \cdot aba \cdot \epsilon$.
\end{example}

The following lemma, which is a natural extension of the properties of ordinary periods to the abelian setting, is a straightforward consequence of the definition of the abelian period.

\begin{lemma}\label{lem:ap}
  Let $v$ be a factor of a word $w$. Then $\mu_w \geq \mu_v$. On the other hand, if $w$ has an abelian period $m$ such
  that $m\leq |v|$, then $m$ is also an abelian period of $v$.
\end{lemma}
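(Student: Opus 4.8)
The plan is to prove the second assertion (if $w$ has an abelian period $m\le|v|$ then so does $v$) first, and then deduce the first one. For the second, fix an abelian decomposition $w=u_0u_1\cdots u_{j-1}u_j$ realizing the abelian period $m$, so that $|u_i|=m$ and $\PV_{u_i}=\PV$ for $1\le i\le j-1$ with $|\PV|=m$, while $|u_0|,|u_j|<m$; write $v=w_{p\pp q}$ with $|v|\ge m$. The idea is to cut $v$ along the block boundaries of this decomposition. Suppose first that $v$ contains at least one of $u_1,\dots,u_{j-1}$ entirely, and let $u_s,u_{s+1},\dots,u_{s+\ell}$ ($\ell\ge 0$) be the blocks contained in $v$; they are consecutive, and $v=h\,u_su_{s+1}\cdots u_{s+\ell}\,t$, where $h$ is the part of $v$ preceding $u_s$. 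Then $h$ is a suffix of $u_{s-1}$ (the factor of $w$ just before $u_s$), so $h=\epsilon$ or $\PV_h\subset\PV$; symmetrically $t=\epsilon$ or $\PV_t\subset\PV$. This factorization has $\ell+3\ge 3$ parts, all inner blocks having Parikh vector $\PV$ of norm $m$, hence it is an abelian decomposition of $v$ witnessing that $m$ is an abelian period of $v$.

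The remaining case is that $v$ contains none of $u_1,\dots,u_{j-1}$ entirely. Since each such block has length exactly $m$ and $|v|\ge m$, a short counting argument (otherwise $v$ would contain a whole block lying strictly between the extreme factors of $w$ that it meets, or would have length less than $m$) forces $v$ to overlap exactly two consecutive factors of the decomposition: $v=X'Y'$ with $X'$ a suffix of $u_a$ (or of $u_0$) and $Y'$ a prefix of $u_{a+1}$ (or of $u_j$), where $1\le|X'|,|Y'|<m$, so $m\le|X'|+|Y'|\le 2m-2$. If $|v|=m$ then $v=\epsilon\cdot v\cdot\epsilon$ already realizes $m$ as an abelian period of $v$, so I may assume $m<|v|<2m$ and look for an abelian decomposition $v=h\cdot B\cdot t$ with a single inner block $B$ of length $m$. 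Since $|X'|,|Y'|<m$, such a $B$ must cross the boundary between $X'$ and $Y'$: it is a suffix of $X'$ followed by a prefix of $Y'$, while $h$ is a prefix of $X'$ and $t$ a suffix of $Y'$. As $\PV_{u_a}$ and $\PV_{u_{a+1}}$ equal $\PV$ (or are contained in it), the Parikh vectors of $X'$, $Y'$ and of all of their prefixes and suffixes are componentwise bounded by $\PV$; exploiting this, I would choose the point at which $B$ crosses the boundary so that $\PV_h\subset\PV_B$ and $\PV_t\subset\PV_B$ (the norm conditions hold automatically since $|h|+|t|=|v|-m<m$), which gives the required decomposition.

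For the first assertion, note that any nonempty word $v$ has $|v|$ itself as an abelian period, via $v=\epsilon\cdot v\cdot\epsilon$, so $\mu_v\le|v|$. If $\mu_w\le|v|$, applying the second assertion with $m=\mu_w$ shows that $\mu_w$ is an abelian period of $v$, whence $\mu_v\le\mu_w$; and if $\mu_w>|v|$, then $\mu_v\le|v|<\mu_w$. In either case $\mu_w\ge\mu_v$. I expect the main obstacle to be precisely the last case of the second assertion: showing that inside a short factor $v$ straddling a single block boundary one can always place the length-$m$ window $B$ so that both leftover pieces have Parikh vectors dominated by that of $B$. For a binary alphabet --- the only case relevant to Sturmian words --- this reduces to a short case analysis according to where $B$ crosses the boundary between $X'$ and $Y'$.
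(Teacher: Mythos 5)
Your reduction of the first assertion to the second is correct, your Case~1 (where $v$ wholly contains one of the inner blocks $u_1,\dots,u_{j-1}$) is complete, and the degenerate sub-case $|v|=m$ is fine. The genuine gap is exactly where you anticipate it: in the remaining sub-case ($v=X'Y'$ straddling a single block boundary, $m<|v|<2m$) you only \emph{assert} that the length-$m$ window $B$ can be placed so that $\PV_h\subset\PV_B$ and $\PV_t\subset\PV_B$, and the reason you offer --- that the Parikh vectors of $X'$, $Y'$ and of all their prefixes and suffixes are dominated by $\PV$ --- does not do the job. Over a three-letter alphabet it is simply false: take $w=abc\cdot cab$, which has abelian period $3$ via $\epsilon\cdot abc\cdot cab\cdot\epsilon$, and the factor $v=bcca$ of length $4$. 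The only two candidate decompositions of $v$ with a block of length $3$ are $\epsilon\cdot bcc\cdot a$ and $b\cdot cca\cdot\epsilon$, and in each the leftover letter does not occur in the block, so $3$ is not an abelian period of $v$; one checks moreover that $\mu_v=4>3=\mu_w$, so over a ternary alphabet \emph{both} assertions of the lemma fail as stated (the paper states the lemma for a general alphabet but only ever applies it to binary words, where it does hold).

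For the binary case the missing ingredient is not a case analysis on where $B$ crosses the $X'/Y'$ boundary but a discrete intermediate-value argument, and it uses nothing about $X'$ and $Y'$ beyond $|v|<2m$. Put $d=|v|-m$ (so $1\le d\le m-2$), $N_a=|v|_a$, $N_b=|v|_b$, and for $0\le p\le d$ let $h=v_{1\pp p}$, $B=v_{p+1\pp p+m}$, $t=v_{p+m+1\pp |v|}$ and let $e_a(p)=|h|_a+|t|_a$ be the number of $\sa{a}$'s outside the window; then $e_a(p)+e_b(p)=d$ and $e_a$ changes by at most $1$ as $p$ increases by $1$. It suffices to find $p$ with $e_a(p)\le\lfloor N_a/2\rfloor$ and $e_b(p)\le\lfloor N_b/2\rfloor$, for then $\max(|h|_a,|t|_a)\le e_a(p)\le N_a-e_a(p)=|B|_a$ and likewise for $\sa{b}$, while $|h|,|t|\le d<m$ gives the norm condition for containment. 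The target window $d-\lfloor N_b/2\rfloor\le e_a(p)\le\lfloor N_a/2\rfloor$ is a nonempty integer interval because $\lfloor N_a/2\rfloor+\lfloor N_b/2\rfloor\ge (m+d)/2-1\ge d$; the endpoint values $e_a(0)$ and $e_a(d)$ count $\sa{a}$'s in the disjoint factors $v_{m+1\pp m+d}$ and $v_{1\pp d}$, so they cannot both exceed $\lfloor N_a/2\rfloor$ (their sum is at most $N_a$), and symmetrically $e_b(0)$ and $e_b(d)$ cannot both exceed $\lfloor N_b/2\rfloor$; hence either an endpoint already lies in the target interval or the two endpoints lie on opposite sides of it and the unit-step walk $e_a$ must pass through it. Supplying this step turns your architecture into a correct proof for binary alphabets, which is all the paper needs; the paper itself gives no proof, declaring the lemma a straightforward consequence of the definitions, which, as the ternary example shows, it is not.
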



\section{Sturmian Words}\label{sec:Sturmian}

From now on, we fix the alphabet $\Sigma=\{\sa{a,b}\}$ and the torus $I=\mathbb{R}/\mathbb{Z}=[0,1)$. Recall that given a real number $\alpha$,  $\lfloor \alpha \rfloor$ is the largest integer smaller than or equal to $\alpha$, $\lceil \alpha \rceil$ is the smallest integer greater than or equal to $\alpha$ and $\{\alpha\}=\alpha-\lfloor \alpha \rfloor$ is the fractional part of  $\alpha$. Notice that $\{-\alpha\}= 1-\{\alpha\}$. We let $\|\alpha\|$ denote the distance between $\alpha$ and the nearest integer, i.e., $\|\alpha\|=\min(\{\alpha\},\{-\alpha\})$. Observe that $\|\alpha\|=\|-\alpha\|$.
Most of the content present in this section is based on the results from \cite{Mi89} (see also \cite[Chap.~2]{lothaire-book:2002} and \cite[Chap.~6]{Pitheasfogg}).

Let us recall the definition of Sturmian words as codings of a rotation.
Let $\alpha\in I$ be irrational and $\rho\in I$. 
The Sturmian word $\underline{s}_{\alpha,\rho}$ (resp.~$\overline{s}_{\alpha,\rho}$) of  \emph{angle} $\alpha$ and \emph{initial point} $\rho$ is the infinite word $a_{0}a_{1}a_{2}\cdots$ defined by
$$a_{n} =
\left\{
	\begin{array}{ll}
		\sa{b}  & \mbox{if } \{ \rho + n\alpha \}\in I_{\sa{b}},\\
		\sa{a}  & \mbox{if } \{ \rho + n\alpha \}\in I_{\sa{a}}, 
	\end{array}
\right.$$ 
where $I_{\sa{b}}=[0,1-\alpha)$ and $I_{\sa{a}}=[1-\alpha,1)$   (resp.~$I_{\sa{b}}=(0,1-\alpha]$ and $I_{\sa{a}}=(1-\alpha,1]$).

In other words, take the unit circle and consider a point initially in position $\rho$. Then rotate this point on the circle (clockwise) by the angles $\alpha$, $2\alpha$, $3\alpha$, etc, and write consecutively the letters associated with the intervals the rotated points fall into. The infinite sequence of letters obtained is the Sturmian word $\underline{s}_{\alpha,\rho}$ or $\overline{s}_{\alpha,\rho}$, depending on the choice of the intervals $I_{\sa{b}}$ and $I_{\sa{a}}$. See \figurename~\ref{Fig:gab1} for an illustration. Notice that the words $\underline{s}_{\alpha,\rho}$ and $\overline{s}_{\alpha,\rho}$ differ by at most two letters: a single occurrence of $\sa{ba}$ changes into $\sa{ab}$ or vice versa. Notice that the words $\sa{ba}$ and $\sa{ab}$ are abelian equivalent.

Mostly the choice of the intervals $I_{\sa{b}}$ and $I_{\sa{a}}$ is irrelevant. We thus adopt the convention that
$s_{\alpha,\rho}$ stands for either of the Sturmian words $\underline{s}_{\alpha,\rho}$ or
$\overline{s}_{\alpha,\rho}$. When we want to emphasize which choice of intervals is used to obtain the word
$s_{\alpha,\rho}$, we write $s_{\alpha,\rho} = \underline{s}_{\alpha,\rho}$ or
$s_{\alpha,\rho} = \overline{s}_{\alpha,\rho}$. Alternatively the choice for a fixed Sturmian word is made explicit
by telling if $0 \in I_{\sa{b}}$ or $0 \notin I_{\sa{b}}$. We later focus on specific subintervals of the torus, and
the choice of the intervals $I_{\sa{b}}$ and $I_{\sa{a}}$ affects the endpoints of these subintervals. We let
$I(\alpha,\beta)$, $\alpha, \beta \in I$, $\alpha < \beta$, to stand for the subinterval $[\alpha,\beta)$ if
$0 \in I_{\sa{b}}$ and for $(\alpha,\beta]$ if $0 \notin I_{\sa{b}}$.

\begin{figure}[!ht]
\centering
\includegraphics[scale=0.3]{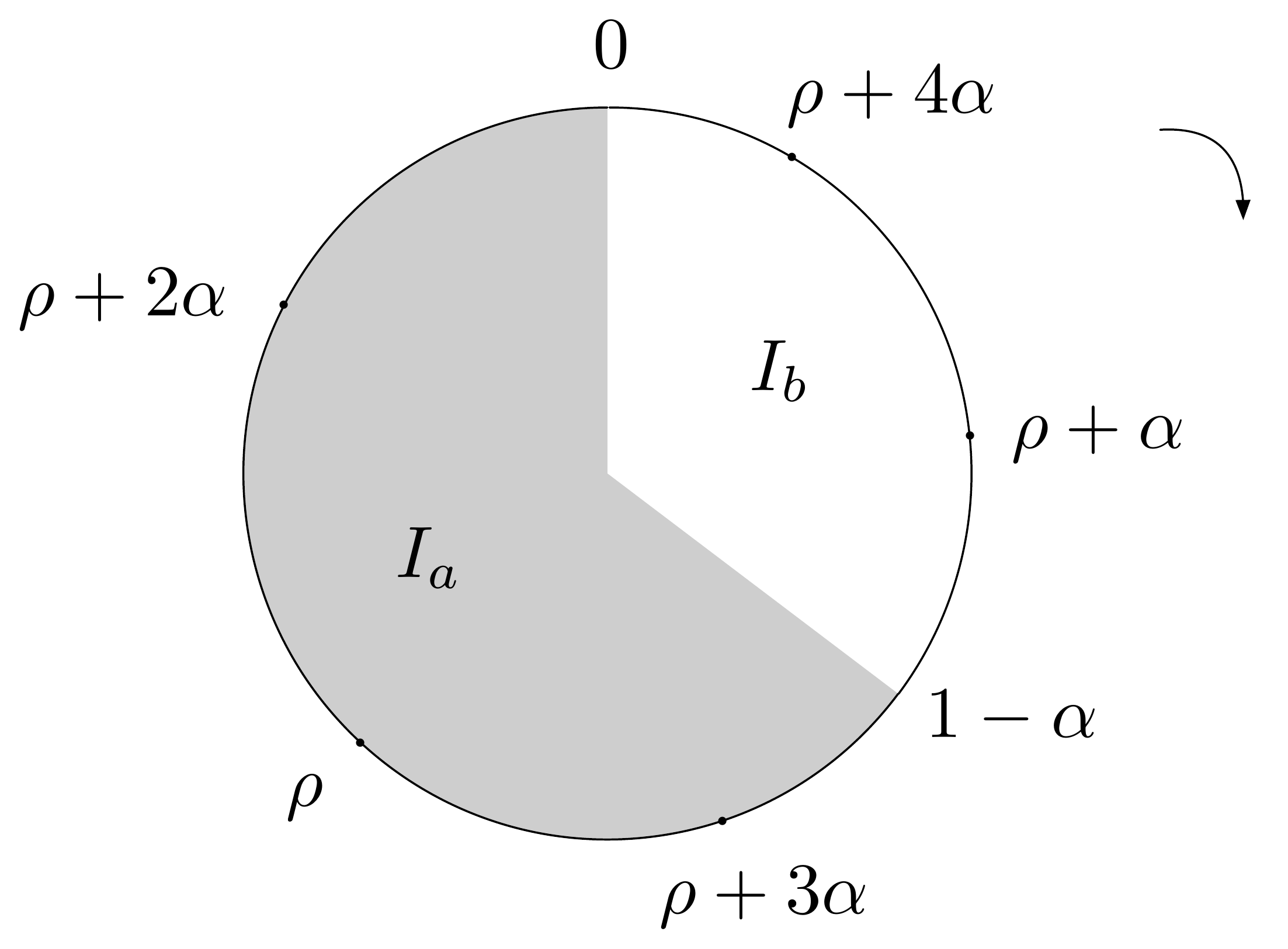} 
\caption{The rotation of the initial point $\rho = \phi-1 \approx 0.618$ by the angle $\alpha = \phi-1$ generating the Fibonacci word $f=s_{\phi-1,\phi-1}=\sa{abaababaabaabab}\cdots$. \label{Fig:gab1}}
\end{figure}

For example, let $\phi=(1+\sqrt 5 )/2\approx 1.618$ be the Golden Ratio, and consider the Sturmian word $f=s_{\phi-1,\phi-1}$, called the \emph{Fibonacci word}. Since the (approximated) first values of the sequence $(\{ (\phi-1) + n(\phi-1) \})_{n\geq 0}$ are $0.618$, $0.236$, $0.854$, $0.472$, $0.090$, $0.708$, $0.326$, $0.944$, $0.562$, $0.180$, $0.798$, $0.416$, $0.034$, and since $1-\alpha=\{-\alpha\}=2-\phi \approx0.382$, we have $$f=\sa{abaababaabaababaababaabaababaabaab}\cdots.$$ The choice of the intervals $I_{\sa{b}}$ and $I_{\sa{a}}$ is irrelevant here as none of the numbers in the sequence $(\{ (\phi-1) + n(\phi-1) \})_{n\geq 0}$ equals $\{-\alpha\}$ or $0$.

A Sturmian word for which $\rho=\alpha$, like the Fibonacci word, is called \emph{characteristic}. Notice that $\underline{s}_{\alpha,0}=\sa{b}s_{\alpha,\alpha}$ and $\overline{s}_{\alpha,0}=\sa{a}s_{\alpha,\alpha}$ for every $\alpha$.

An equivalent view is to fix the point and rotate the intervals backwards. The interval $I_{\sa{b}}=I_{\sa{b}}^{0}$ is rotated at each step, so that after $i$ rotations it is transformed into the interval $I_{\sa{b}}^{-i}=I(\{-i\alpha\},\{-(i+1)\alpha\})$, while $I_{\sa{a}}^{-i}=I\setminus I_{\sa{b}}^{-i}$. See \figurename~\ref{Fig:gab2} for an illustration.

\begin{figure}[!ht]
\centering
\includegraphics[scale=0.5]{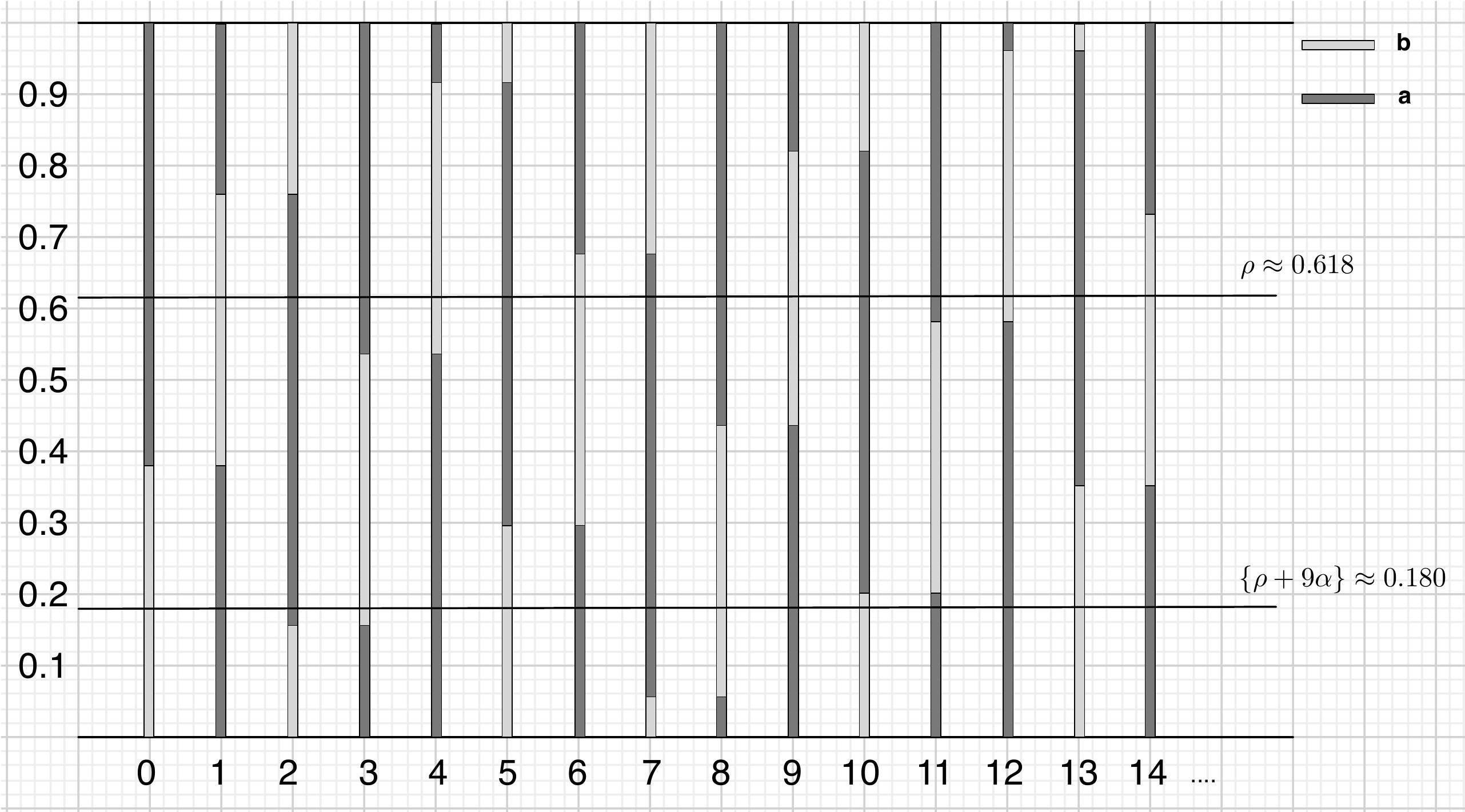} 
\caption{The interval $I_{\sa{b}}=I_{\sa{b}}^{0}=I(0,1-\alpha)$  is rotated at each step, defining the intervals $I_{\sa{b}}^{-i}=I(\{-i\alpha\},\{-(i+1)\alpha\})$ (light gray). For every $i$, the complement of the interval $I_{\sa{b}}^{-i}$ is the interval $I_{\sa{a}}^{-i}$ (dark gray). In the figure, $\alpha=\phi-1\approx 0.618$. The Fibonacci word $f$ can be obtained by looking at the horizontal line of height $\rho=\alpha$. The factor of length $15$ starting at position $9$ of the Fibonacci word $f$, that is $\sa{baababaababaaba}$, can be obtained by looking at the horizontal line of height $\{\rho+9\alpha\}$ (Proposition \ref{prima}).\label{Fig:gab2}}
\end{figure}

This representation is convenient since one can read within it not only a Sturmian word but also any of its factors. More precisely, for every positive integer $m$, the factor of length $m$ of $s_{\alpha,\rho}$ starting at position $n$ is determined only by the value of $\{\rho+n\alpha\}$, as shown in the following proposition.

\begin{proposition}\label{prima}
 Let $s_{\alpha,\rho}=a_{0}a_{1}a_{2}\cdots$ be a Sturmian word. Then, for every $n$ and $i$, we have:
 $$a_{n+i} = \left\{ \begin{array}{lllll}
\sa{b} & \mbox{if $\{\rho + n\alpha\}\in I_{\sa{b}}^{-i}$;}\\
\sa{a} & \mbox{if $\{\rho + n\alpha\}\in I_{\sa{a}}^{-i}$.}
\end{array} \right.$$
\end{proposition}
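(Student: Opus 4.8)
The plan is to reduce the statement to the definition of $s_{\alpha,\rho}$ together with the single elementary identity $\{x+y\}=\{\{x\}+y\}$, valid for all real $x,y$ since $x-\{x\}\in\Z$. By definition $a_{n+i}=\sa{b}$ exactly when $\{\rho+(n+i)\alpha\}\in I_{\sa{b}}=I_{\sa{b}}^{0}$, and $a_{n+i}=\sa{a}$ when $\{\rho+(n+i)\alpha\}\in I_{\sa{a}}^{0}$. Writing $t=\{\rho+n\alpha\}$ and applying the identity, $\{\rho+(n+i)\alpha\}=\{(\rho+n\alpha)+i\alpha\}=\{t+i\alpha\}$, so $a_{n+i}$ is determined solely by the position of the single point $\{t+i\alpha\}$ relative to $I_{\sa{b}}^{0}$, that is, by where $t$ lands after being rotated forward $i$ times by the angle $\alpha$.

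Next I would translate ``rotate the point forward $i$ times'' into ``rotate the interval backward $i$ times''. The map $R\colon I\to I$, $R(x)=\{x+\alpha\}$, is a bijection of the torus, and by the very definition of $I_{\sa{b}}^{-i}$ given before the statement (namely $I_{\sa{b}}^{0}$ rotated backward $i$ steps) we have $R^{i}(I_{\sa{b}}^{-i})=I_{\sa{b}}^{0}$, hence $R^{i}(x)\in I_{\sa{b}}^{0}\iff x\in I_{\sa{b}}^{-i}$ for every $x\in I$. Since $R^{i}(t)=\{t+i\alpha\}$, we conclude $a_{n+i}=\sa{b}\iff t=\{\rho+n\alpha\}\in I_{\sa{b}}^{-i}$. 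Because $R$ is a bijection it sends complements to complements, so $I_{\sa{a}}^{-i}=I\setminus I_{\sa{b}}^{-i}$ and the same argument gives $a_{n+i}=\sa{a}\iff\{\rho+n\alpha\}\in I_{\sa{a}}^{-i}$, completing the proof. An equivalent route is induction on $i$: the case $i=0$ is the definition, and the inductive step applies the case $i-1$ to the point $\{(\rho+n\alpha)+\alpha\}$ and the interval $I_{\sa{b}}^{-(i-1)}$, then uses the torus identity $I_{\sa{b}}^{-(i-1)}-\alpha=I_{\sa{b}}^{-i}$.

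The only point requiring a little care, and essentially the only content beyond unwinding definitions, is the treatment of the endpoints: $I_{\sa{b}}^{0}$ is half-open, the convention $I(\cdot,\cdot)$ switches between $[\cdot,\cdot)$ and $(\cdot,\cdot]$ according to whether $0\in I_{\sa{b}}$, and $I_{\sa{b}}^{-i}$ may wrap around $0$ (precisely when $\{-i\alpha\}>\{-(i+1)\alpha\}$), in which case it is $I(\{-i\alpha\},1)\cup I(0,\{-(i+1)\alpha\})$ rather than one subinterval. This causes no real difficulty: $R$ is an exact bijection of $I=\R/\Z$, so it transports $I_{\sa{b}}^{0}$ together with the inclusion/exclusion status of each endpoint and any wraparound faithfully onto $I_{\sa{b}}^{-i}$, and the equivalence $R^{i}(x)\in I_{\sa{b}}^{0}\iff x\in I_{\sa{b}}^{-i}$ then holds on the nose for every $x$, endpoints included. (Irrationality of $\alpha$ ensures $\{\rho+n\alpha\}$ does not accidentally sit on an endpoint unless $\rho$ forces it, but this is not even needed for the statement as phrased.)
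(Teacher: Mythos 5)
Your proof is correct. The paper states Proposition~\ref{prima} without any proof, treating it as immediate from the definition of the rotated intervals $I_{\sa{b}}^{-i}$; your argument---using $\{\rho+(n+i)\alpha\}=\{\{\rho+n\alpha\}+i\alpha\}$ to conjugate the forward rotation of the point into the backward rotation of the interval, so that $R^{i}(t)\in I_{\sa{b}}^{0}$ if and only if $t\in R^{-i}(I_{\sa{b}}^{0})=I_{\sa{b}}^{-i}$---is exactly the intended justification, and your explicit handling of the endpoint conventions and of the wraparound of $I_{\sa{b}}^{-i}$ around $0$ supplies the only details the paper leaves tacit.
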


For example, suppose we want to know the factor of length 15  starting at position 9 in the Fibonacci word $f$. We have $\{\rho+9\alpha\}=\{\phi-1+9(\phi-1)\}\approx 0.180$. The first terms of the sequence $(\{-i\alpha\})_{i\geq 0}$ are, approximately, $0$, $0.382$, $0.764$, $0.146$, $0.528$, $0.910$, $0.292$, $0.674$, $0.056$, $0.438$, $0.820$, $0.202$, $0.584$, $0.966$, $0.348$, $0.729$. So we get $a_{9}a_{10}\cdots a_{23}=\sa{baababaababaaba}$ (see \figurename~\ref{Fig:gab2}).

A remarkable consequence of Proposition \ref{prima} is the following: Given a Sturmian word $s_{\alpha,\rho}$ and  a positive integer $m$, the $m+1$ different factors of $s_{\alpha,\rho}$ of length $m$ are completely determined by the intervals $I_{\sa{b}}^{0}, I_{\sa{b}}^{-1},\ldots, I_{\sa{b}}^{-(m-1)}$, that is, only by the points $\{-i\alpha\}$ for $0\leq i< m$. In particular, they do not depend on the initial point $\rho$, so the set of factors of $s_{\alpha,\rho}$ is the same as the set of factors of $s_{\alpha,\rho'}$ for any $\rho$ and $\rho'$. 
Hence, from now on, we let $s_{\alpha}$ denote any Sturmian word of angle $\alpha$.

If we arrange the $m+2$ points $0,1,\{-\alpha\},\{-2\alpha\},\ldots,\{-m \alpha\}$ in increasing order, we determine a partition of $I$ in $m+1$ half-open subintervals $L_0(m),L_{1}(m),\ldots,L_{m}(m)$. Each of these subintervals is in bijection with a factor of length $m$ of any Sturmian word of angle $\alpha$ (see \figurename~\ref{Fig:gab3}). 

\begin{figure}[!ht]
\centering
\includegraphics[scale=0.6]{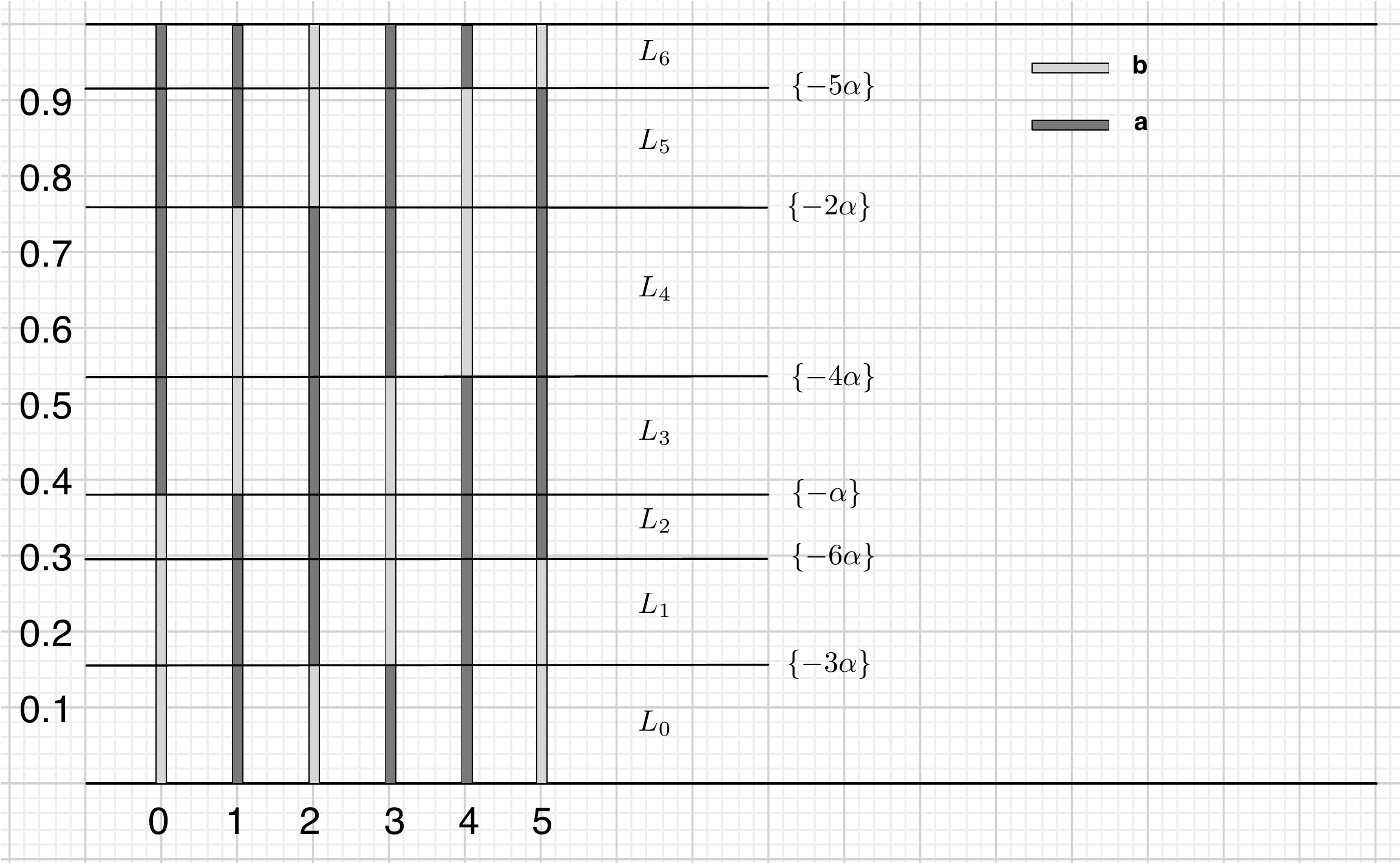} 
\caption{
The points $0$, $1$, $\{-\alpha\}$, $\{-2\alpha\}$, $\{-3\alpha\}$, $\{-4\alpha\}$, $\{-5\alpha\}$ and $\{-6\alpha\}$
arranged in increasing order. This gives the intervals $L_{0}(6)\approx I(0,0.146)$, $L_{1}(6)\approx I(0.146,0.292)$, $L_{2}(6)\approx I(0.292,0.382)$, $L_{3}(6)\approx I(0.382,0.528)$, $L_{4}(6)\approx I(0.528,0.764)$, $L_{5}(6)\approx I(0.764,0.910)$ and $L_{6}(6)\approx I(0.910,1)$. These intervals are associated respectively with the factors
$\sa{babaab}$,$\sa{baabab}$,$\sa{baabaa}$,$\sa{ababaa}$,$\sa{abaaba}$,$\sa{aababa}$ and $\sa{aabaab}$ of length $6$ of the Fibonacci word. \label{Fig:gab3}}
  
\end{figure}

Moreover, the factors associated with these intervals are lexicographically ordered (decreasingly), as stated in the following proposition. 

\begin{proposition}\label{Pro:otherbranch}
Let $m\geq 1$ and $0\le j,k\le m$. Then  the factor 
associated with the interval  $L_j(m)$ is lexicographically greater than the factor 
associated with the interval $L_k(m)$ if and only if $j<k$.
\end{proposition}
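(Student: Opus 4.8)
The plan is to proceed by induction on $m$, using Proposition \ref{prima} to characterize the $i$-th letter of the factor associated with $L_j(m)$ in terms of the position of $L_j(m)$ relative to the interval $I_{\sa{b}}^{-i}$. First I would set up notation: for a subinterval $L$ of the partition, write $w(L)$ for the factor of length $m$ associated with $L$, and recall from Proposition \ref{prima} that if $x \in L$ then the $(i+1)$-st letter of $w(L)$ is $\sa{b}$ if $x \in I_{\sa{b}}^{-i}$ and $\sa{a}$ otherwise, for $0 \le i < m$. Since $\sa{a}$ is lexicographically greater than $\sa{b}$ (the alphabet order is $\sa{a} < \sa{b}$ — I should double-check the convention used in the paper; if instead $\sa{b} < \sa{a}$ the argument is symmetric), the factor $w(L)$ is lexicographically larger exactly when, at the first index $i$ where two factors differ, its point lies in $I_{\sa{a}}^{-i}$ rather than $I_{\sa{b}}^{-i}$.

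The base case $m = 1$ is immediate: the two intervals are $L_0(1) = I(0,1-\alpha) = I_{\sa{b}}^{0}$ and $L_1(1) = I(1-\alpha,1) = I_{\sa{a}}^{0}$, giving factors $\sa{b}$ and $\sa{a}$ respectively, and $\sa{a} >_{\mathrm{lex}} \sa{b}$ while $0 = j < k = 1$, as claimed. For the inductive step, I would observe that the partition for $m$ refines the partition for $m-1$: the points defining $L_0(m),\dots,L_m(m)$ are those defining $L_0(m-1),\dots,L_{m-1}(m-1)$ together with the single extra point $\{-m\alpha\}$, which splits exactly one interval $L_t(m-1)$ into two pieces $L_t(m)$ and $L_{t+1}(m)$, while $L_j(m) \subseteq L_j(m-1)$ for $j \le t$ and $L_j(m) \subseteq L_{j-1}(m-1)$ for $j \ge t+1$. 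For two intervals lying in distinct $L$-intervals of level $m-1$, the first $m-1$ letters already determine the lexicographic order and the induction hypothesis applies directly (the relative order of indices is preserved by the refinement). The only new case is comparing $L_t(m)$ with $L_{t+1}(m)$: these have the same first $m-1$ letters (both were $\subseteq L_t(m-1)$), so they differ only in the last letter, and that last letter is determined by whether the point lies in $I_{\sa{b}}^{-(m-1)}$ or $I_{\sa{a}}^{-(m-1)}$. Here I would use the key geometric fact that the endpoint $\{-m\alpha\}$ separating $L_t(m)$ from $L_{t+1}(m)$ is precisely an endpoint of $I_{\sa{b}}^{-(m-1)} = I(\{-(m-1)\alpha\},\{-m\alpha\})$, so one of $L_t(m)$, $L_{t+1}(m)$ lies inside $I_{\sa{b}}^{-(m-1)}$ and the other inside $I_{\sa{a}}^{-(m-1)}$; a short case analysis on whether $\{-m\alpha\}$ is the left or right endpoint of $I_{\sa{b}}^{-(m-1)}$ (equivalently, on the sign of $\{-m\alpha\} - \{-(m-1)\alpha\}$, i.e.\ whether $\{-\alpha\} < 1-\alpha$ shifts us left or right — in fact $I_{\sa{b}}^{-(m-1)}$ is the interval immediately to one fixed side) shows that the piece inside $I_{\sa{a}}^{-(m-1)}$, whose last letter is $\sa{a}$, is the one with the smaller index, consistent with the claim.

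The main obstacle I anticipate is the bookkeeping in this last case: one must verify carefully that the new point $\{-m\alpha\}$ always falls in the interior of exactly one existing interval (this uses irrationality of $\alpha$, so the $m+2$ points $0,1,\{-\alpha\},\dots,\{-m\alpha\}$ are pairwise distinct) and that of the two halves $L_t(m)$, $L_{t+1}(m)$ the correct one receives the letter $\sa{a}$. Rather than tracking left/right endpoints explicitly, a cleaner route may be to argue directly from Proposition \ref{prima} without induction: given $j < k$, pick points $x \in L_j(m)$ and $y \in L_k(m)$, let $i$ be the least index with $a_{n+i}$ differing between the two (such $i$ exists since the factors are distinct, the $L$-intervals being in bijection with the $m+1$ distinct factors of length $m$), and show that $x \in I_{\sa{a}}^{-i}$, $y \in I_{\sa{b}}^{-i}$ by noting that the interval $I_{\sa{b}}^{-i}$ is a union of consecutive $L$-intervals whose indices form an interval of integers, and $j < k$ forces $x$ to lie above $I_{\sa{b}}^{-i}$ (in $I_{\sa{a}}^{-i}$) while $y$ lies in it — here one needs the fact that the $L$-intervals are numbered in increasing order of position and that each $I_{\sa{b}}^{-i}$ and $I_{\sa{a}}^{-i}$ is a contiguous block in this numbering, which follows since the endpoints of $I_{\sa{b}}^{-i}$ are among the partition points. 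I would likely present whichever of these two versions is shorter in the final write-up; the inductive version has the advantage of matching the refinement picture in \figurename~\ref{Fig:gab3}.
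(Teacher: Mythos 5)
Your overall strategy---induction on $m$ via the refinement of the partition, with the only nontrivial comparison being between the two halves of the interval split by the new point---is exactly the paper's, and you correctly isolate the one step the paper leaves implicit, namely which of the two halves receives which final letter. But your resolution of that step is backwards, and the error is masked by a second, compensating reversal of the letter order. With the paper's ordered alphabet $\Sigma=\{\sa{a},\sa{b}\}$ one has $\sa{a}<\sa{b}$, so $\sa{b}$ is the lexicographically \emph{larger} letter (check against \figurename~\ref{Fig:gab3}: $L_0(6)$ carries $\sa{babaab}$, which must exceed $\sa{aabaab}$ at $L_6(6)$); your assertion that ``$\sa{a}$ is lexicographically greater than $\sa{b}$'' contradicts your own parenthetical ``the alphabet order is $\sa{a}<\sa{b}$''. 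Consequently your base case, as written, verifies the \emph{negation} of the claim: $L_0(1)$ carries $\sa{b}$ and $L_1(1)$ carries $\sa{a}$, so the statement for $m=1$ requires $\sa{b}>_{\mathrm{lex}}\sa{a}$, not $\sa{a}>_{\mathrm{lex}}\sa{b}$. Correspondingly, in the inductive step the \emph{lower} half of the split interval (the one keeping the smaller index $t$) lies in $I_{\sa{b}}^{-(m-1)}$, not in $I_{\sa{a}}^{-(m-1)}$: the arc $I_{\sa{b}}^{-(m-1)}=I(\{-(m-1)\alpha\},\{-m\alpha\})$ is the positively oriented arc of length $1-\alpha$ ending at the new point $\{-m\alpha\}$, and its other endpoint $\{-(m-1)\alpha\}$ is already a partition point of level $m-1$, hence not interior to $L_t(m-1)$; so the portion of $L_t(m-1)$ below $\{-m\alpha\}$ sits entirely in $I_{\sa{b}}^{-(m-1)}$ and receives $\sa{b}$, while the upper portion receives $\sa{a}$. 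Thus the smaller index gets the larger letter, which is why the proposition holds. Your two sign errors cancel to give the right conclusion, which means the direction of the inequality was never actually verified.

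A smaller issue concerns your sketched direct alternative: the set of indices $j$ with $L_j(m)\subseteq I_{\sa{b}}^{-i}$ need not be an interval of integers, because $I_{\sa{b}}^{-i}$ is an arc of the torus that in general wraps around $0$ (for the Fibonacci word, $I_{\sa{b}}^{-2}\approx I(0.764,1)\cup I(0,0.146)$ meets $L_0(3)$ and $L_3(3)$ but neither $L_1(3)$ nor $L_2(3)$). The argument is repaired by working inside the single level-$i$ interval $L_s(i)$ containing both $L_j(m)$ and $L_k(m)$, where $i$ is the first position of disagreement: $L_s(i)$ does not wrap (since $0$ is a partition point), the point $\{-(i+1)\alpha\}$ separates its $I_{\sa{b}}^{-i}$-part (below) from its $I_{\sa{a}}^{-i}$-part (above) by the same endpoint argument as above, and so the interval with the smaller index codes $\sa{b}$ at position $i$. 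With the two reversals corrected, either version of your proof coincides in substance with the paper's.
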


\begin{proof}
  We prove the statement by induction on $m$. The case $m=1$ is true by the definition of the two subintervals $L_{0}(1)=I_{\sa{b}}$ and $L_{1}(1)=I_{\sa{a}}$. 
Suppose now that the statement holds for some $m$ such that $m\ge 1$ and let us show that it holds for $m+1$. 
The sequence of subintervals  $L_i(m)$  is the same sequence of intervals $L_{i}(m+1)$ except for the interval $L_{t}(m)$ containing the point $\{-(m+1)\alpha\}$, which is subdivided into two intervals.
Therefore, each of the $m+1$ factors of length $m$ is extended to the right by one letter in a unique way, except for the factor $t$ associated with the interval $L_{t}(m)$, that is extended to $t\sa{b}$, associated with the interval $L_{t}(m+1)$, and to $t\sa{a}$, associated with the interval $L_{t+1}(m+1)$. Thus, for these two factors the statement holds.
For the factors of length $m+1$ different from $t\sa{b}$ and $t\sa{a}$ we can apply the induction hypothesis.
\end{proof}

The result of Proposition \ref{Pro:otherbranch} is of independent interest and is related to some recent research on Sturmian words and the lexicographic order (see \cite{Bucci201225,Glen200845,JeZa04,Perrin2012265}).

We now present properties of the bijection between the factors of length $m$ and the intervals $L_{i}(m)$ that allow us to use standard Number Theory techniques to deal with abelian repetitions in Sturmian words and, in particular, in the Fibonacci word.

Recall that a factor of length $m$ of a Sturmian word  $s_{\alpha}$ has a Parikh vector equal either to $(\lfloor m\alpha \rfloor , m-\lfloor m\alpha \rfloor )$ (in which case it is called \emph{light}) or to $(\lceil m\alpha \rceil , m-\lceil m\alpha \rceil) $ (in which case it is called \emph{heavy}).
The following proposition relates the intervals $L_{i}(m)$ to the Parikh vectors of the associated factors; this result appears as a part of \cite[Theorem~19]{Rigo13}.

\begin{proposition}\label{pro:main}
Let $s_{\alpha}$ be a Sturmian word of angle $\alpha$ and $m$ be a positive integer.
Let $t_{i}$ be the factor of length $m$ associated with the interval $L_{i}(m)$. Then $t_{i}$ is
  heavy if  $L_{i}(m)\subset I(\{-m\alpha\},1)$, while it is light if  $L_{i}(m)\subset I(0,\{-m\alpha\})$.

Moreover, if $\{-m\alpha\}\geq \{-\alpha\}$, then all heavy factors start and end with $\sa{a}$, while if $\{-m\alpha\} \leq \{-\alpha\}$, then all light factors start and end with $\sa{b}$.
\end{proposition}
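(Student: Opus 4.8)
The plan is to establish the two assertions separately, both relying on Proposition \ref{prima} and the description of the partition $L_0(m),\ldots,L_m(m)$ via the points $\{-i\alpha\}$, $0\le i\le m$.

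First I would handle the heavy/light dichotomy. Fix a point $x$ in the interior of some $L_i(m)$, and let $t_i=a_0a_1\cdots a_{m-1}$ be the associated factor of length $m$, so by Proposition \ref{prima} the letter $a_j$ is $\sa{a}$ iff $x\in I_{\sa{a}}^{-j}=I\setminus I_{\sa{b}}^{-j}$, i.e. iff $x\notin I(\{-j\alpha\},\{-(j+1)\alpha\})$. Equivalently, $|t_i|_{\sa{a}}$ counts the indices $j\in\{0,\ldots,m-1\}$ for which $x$ lies outside the arc from $\{-j\alpha\}$ to $\{-(j+1)\alpha\}$. The standard three-distance / telescoping observation is that as $j$ runs over $0,\ldots,m-1$, the half-open arcs $I(\{-j\alpha\},\{-(j+1)\alpha\})$ are obtained by successively translating $I_{\sa{b}}$ by $-\alpha$; the multiset of these arcs covers the torus in a way that a generic point $x$ is covered exactly $m-\lfloor m\alpha\rfloor$ or $m-\lceil m\alpha\rceil$ times depending on which side of $\{-m\alpha\}$ it falls. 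Concretely, I would argue that the number of $\sa{a}$'s is $m-\lfloor m\alpha\rfloor$ precisely when $x\in I(\{-m\alpha\},1)$ and $m-\lceil m\alpha\rceil$ when $x\in I(0,\{-m\alpha\})$; this is exactly the content that $t_i$ is heavy in the former case and light in the latter, since heavy means Parikh vector $(\lceil m\alpha\rceil,m-\lceil m\alpha\rceil)$. The cleanest route is induction on $m$ mirroring the proof of Proposition \ref{Pro:otherbranch}: passing from $m$ to $m+1$ adds the point $\{-(m+1)\alpha\}$, splitting one interval $L_t(m)$ into $L_t(m+1)$ (extend by $\sa{b}$) and $L_{t+1}(m+1)$ (extend by $\sa{a}$), and one checks that the new point $\{-(m+1)\alpha\}$ falls in the correct sub-arc so that exactly the intervals on the $\sa{a}$-side of $\{-m\alpha\}$, plus the newly created $L_{t+1}(m+1)$, end up heavy.

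For the second assertion, suppose $\{-m\alpha\}\ge\{-\alpha\}$ and let $t_i$ be heavy, i.e. $L_i(m)\subset I(\{-m\alpha\},1)$. I want to show $a_0=a_{m-1}=\sa{a}$. By Proposition \ref{prima}, $a_0=\sa{a}$ iff $x\in I_{\sa{a}}^{0}=I\setminus I(0,\{-\alpha\})$, i.e. iff $x\ge\{-\alpha\}$ (up to the endpoint convention); since $x>\{-m\alpha\}\ge\{-\alpha\}$, this holds. For the last letter: $a_{m-1}=\sa{a}$ iff $x\notin I(\{-(m-1)\alpha\},\{-m\alpha\})=I_{\sa{b}}^{-(m-1)}$; since this arc is the translate of $I_{\sa{b}}$ by $-(m-1)\alpha$, its right endpoint is $\{-m\alpha\}$, and as $x>\{-m\alpha\}$ we get $x\notin I_{\sa{b}}^{-(m-1)}$, hence $a_{m-1}=\sa{a}$. (One must be slightly careful about which arc is "short": since $\{-m\alpha\}\ge\{-\alpha\}$ one checks the arc $I(\{-(m-1)\alpha\},\{-m\alpha\})$ does not wrap around $0$, so it is genuinely the interval with endpoints as written.) The light case with $\{-m\alpha\}\le\{-\alpha\}$ is symmetric: a light $t_i$ has $L_i(m)\subset I(0,\{-m\alpha\})$, so $x<\{-m\alpha\}\le\{-\alpha\}$ gives $x\in I(0,\{-\alpha\})=I_{\sa{b}}^{0}$, whence $a_0=\sa{b}$, and $x<\{-m\alpha\}$ together with $x\ge 0$ puts $x$ in $I_{\sa{b}}^{-(m-1)}=I(\{-(m-1)\alpha\},\{-m\alpha\})$, whence $a_{m-1}=\sa{b}$.

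The main obstacle I anticipate is purely bookkeeping about arcs that wrap around $0$ and the half-open endpoint conventions encoded by $I(\cdot,\cdot)$: one must verify in each case that the relevant arc $I(\{-(m-1)\alpha\},\{-m\alpha\})$ or $I(0,\{-\alpha\})$ is the "non-wrapping" interval with the stated endpoints, and that the inequalities $\{-m\alpha\}\ge\{-\alpha\}$ (resp. $\le$) are exactly what guarantees this. The inductive step for the first assertion also requires pinning down into which of the two pieces $L_t(m+1)$, $L_{t+1}(m+1)$ the point $\{-(m+1)\alpha\}=\{-m\alpha\}-\alpha$ naturally sits relative to $\{-m\alpha\}$, which is again an endpoint-comparison argument rather than anything deep. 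Once these conventions are fixed, both parts are short.
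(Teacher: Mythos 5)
Your strategy matches the paper's: the first assertion by induction on $m$ (tracking how the insertion of the point $\{-(m+1)\alpha\}$ refines the partition and how the one-letter extensions change the Parikh vectors), and the second by a direct endpoint comparison via Proposition \ref{prima} --- which the paper dismisses with ``follows from the definition'', so your explicit check of the arcs $I_{\sa{b}}^{0}$ and $I_{\sa{b}}^{-(m-1)}$, including the wrap-around caveat, is more detailed than the paper and is correct (note only that in the light case the arc $I_{\sa{b}}^{-(m-1)}$ actually \emph{does} wrap around $0$, and it is precisely its piece $[0,\{-m\alpha\})$ that captures $x$).

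However, one claim in your inductive step is false as stated and would break the induction if executed literally: when $\{-(m+1)\alpha\}<\{-m\alpha\}$, the heavy intervals at level $m+1$ are \emph{not} ``the intervals on the $\sa{a}$-side of $\{-m\alpha\}$ plus the newly created $L_{t+1}(m+1)$''. In this case the arc $I_{\sa{b}}^{-m}=I(\{-m\alpha\},\{-(m+1)\alpha\})$ wraps around $0$, so \emph{every} interval contained in $I(\{-(m+1)\alpha\},\{-m\alpha\})$ --- not only the freshly split one --- is extended by $\sa{a}$; these were all light and all become heavy, while the old heavy intervals are extended by $\sa{b}$ and remain heavy. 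For instance, with $\alpha=\phi-1$ and $m=2$: the interval $I(0.382,0.764)$ carries the light factor $\sa{ab}$, is not split by $\{-3\alpha\}\approx 0.146$, yet becomes heavy at level $3$ (it carries $\sa{aba}$). The invariant you must propagate is exactly the one in the statement --- heavy if and only if the interval lies above $\{-(m+1)\alpha\}$ --- and verifying it requires the two-case analysis on the relative position of $\{-(m+1)\alpha\}$ and $\{-m\alpha\}$, as in the paper. (A minor further slip: in your covering-multiplicity aside, the number of times $x$ is covered by the arcs $I_{\sa{b}}^{-j}$ counts the occurrences of $\sa{b}$, not of $\sa{a}$; a heavy factor has $\lceil m\alpha\rceil$ occurrences of $\sa{a}$, not $m-\lfloor m\alpha\rfloor$.)
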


\begin{proof}
We prove the statement by induction on $m$. The case $m=1$ is true by definition. Suppose the
statement true for $m$, and let us prove it for $m+1$. We have two cases: 
\begin{enumerate}
 \item $\{-(m+1)\alpha\} < \{-m\alpha\}$; 
 \item $\{-(m+1)\alpha\} > \{-m\alpha\}$.
\end{enumerate}

Case 1. By induction, the factors of length $m$ corresponding to intervals above $\{-m\alpha\}$ are heavy and the others are light. Hence, the factors of length $m+1$ corresponding to intervals above $\{-(m+1)\alpha\}$ are either heavy factors of length $m$ extended with $\sa{b}$ or light factors of length $m$ extended with $\sa{a}$, while the factors of length $m+1$ corresponding to intervals below $\{-(m+1)\alpha\}$ are light factors of length $m$ extended with $\sa{b}$. Therefore, the former are the heavy factors of length $m+1$, while the latter are the light ones.

Case 2. In this case, the factors of length $m+1$ corresponding to intervals above $\{-(m+1)\alpha\}$ are heavy factors of length $m$ extended with $\sa{a}$, so they must be heavy factors of length $m+1$. The factors of length $m+1$ corresponding to intervals below $\{-(m+1)\alpha\}$ have another Parikh vector, so they are the light ones.

The second part of the statement follows directly from the very definition of Sturmian words as rotation words. 
\end{proof}

\begin{example}
 Let $\alpha=\phi-1\approx 0.618$ and $m=6$. We have $6\alpha\approx 3.708$, so $\{-6\alpha\}\approx 0.292$. It is evident from \figurename~\ref{Fig:gab3} that the factors of length $6$ 
corresponding to intervals above (resp.~below) $\{-6\alpha\}\approx 0.292$ all have Parikh vector $(4,2)$ (resp.~$(3,3)$). That is, the intervals $L_{0}$ and $L_{1}$ are associated with the light factors (\sa{babaab}, \sa{baabab}), while the intervals from $L_{2}$ to $L_{6}$ are associated with the heavy factors (\sa{baabaa}, \sa{ababaa}, \sa{abaaba}, \sa{aababa}, \sa{aabaab}). Notice that every light factor starts and ends with $\sa{b}$ since $\{-6\alpha\}<\{-\alpha\}$.
\end{example}

From Propositions \ref{prima} and \ref{pro:main}, we derive the following.

\begin{corollary}\label{cor:hl}
Let $s_{\alpha,\rho}$ be a Sturmian word. Then for all integers $m$ and $n$, the factor of length $m$ occurring in $s_{\alpha,\rho}$ at position $n$ is heavy if $\{\rho+n\alpha\}>\{-m\alpha\}$, while it is light if $\{\rho+n\alpha\}<\{-m\alpha\}$.
\end{corollary}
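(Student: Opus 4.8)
The plan is to combine Proposition~\ref{prima} (the factor of length $m$ at position $n$ depends only on $\{\rho+n\alpha\}$, via the intervals $I_{\sa{b}}^{-i}$) with Proposition~\ref{pro:main} (which of the intervals $L_i(m)$ correspond to heavy factors and which to light factors). The key observation is that the partition of $I$ into the intervals $L_0(m),\dots,L_m(m)$ is exactly the common refinement of the partitions $\{I_{\sa{b}}^{-i},I_{\sa{a}}^{-i}\}$ for $0\le i<m$, so that the factor of length $m$ occurring at position $n$ in $s_{\alpha,\rho}$ is precisely the factor $t_i$ associated with the unique interval $L_i(m)$ containing the point $\{\rho+n\alpha\}$. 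This identification is essentially implicit in the discussion following Proposition~\ref{prima}, but I would state it explicitly as the first step.

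Granted that identification, the corollary is almost immediate. Proposition~\ref{pro:main} says $t_i$ is heavy when $L_i(m)\subset I(\{-m\alpha\},1)$ and light when $L_i(m)\subset I(0,\{-m\alpha\})$; equivalently, since the point $\{-m\alpha\}$ is one of the endpoints used to cut $I$ into the $L_i(m)$, every $L_i(m)$ lies entirely on one side of $\{-m\alpha\}$, and $t_i$ is heavy precisely when $L_i(m)$ lies above $\{-m\alpha\}$ and light precisely when it lies below. So if $\{\rho+n\alpha\}>\{-m\alpha\}$, then the interval $L_i(m)$ containing $\{\rho+n\alpha\}$ lies above $\{-m\alpha\}$, hence $t_i$ is heavy; and symmetrically if $\{\rho+n\alpha\}<\{-m\alpha\}$, then $L_i(m)$ lies below $\{-m\alpha\}$ and $t_i$ is light. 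That is exactly the statement of Corollary~\ref{cor:hl}.

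The only subtlety, and the one place I would be careful, is the boundary behaviour: the point $\{-m\alpha\}$ itself, the point $0$, and the endpoints of the various $L_i(m)$ must be handled consistently with the convention fixed earlier (whether $0\in I_{\sa{b}}$ or $0\notin I_{\sa{b}}$, which determines whether the intervals $I(\alpha,\beta)$ are half-open on the left or on the right). Since $\alpha$ is irrational, $\{\rho+n\alpha\}$ can equal one of the cut points only in degenerate situations, and the corollary is stated only for the strict inequalities $\{\rho+n\alpha\}>\{-m\alpha\}$ and $\{\rho+n\alpha\}<\{-m\alpha\}$, so these boundary cases are excluded from the claim; I would simply remark that the strict inequality forces $\{\rho+n\alpha\}$ into the interior of the union of the $L_i(m)$ on the appropriate side of $\{-m\alpha\}$, so no endpoint ambiguity arises. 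Apart from this bookkeeping, there is no real obstacle: the corollary is a direct synthesis of the two preceding propositions, and the proof is a few lines.
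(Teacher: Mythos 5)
Your proposal is correct and follows exactly the route the paper intends: the paper gives no explicit proof, stating only that the corollary follows from Propositions \ref{prima} and \ref{pro:main}, and your synthesis (identifying the factor at position $n$ with the factor of the interval $L_i(m)$ containing $\{\rho+n\alpha\}$, then reading off heavy/light from which side of $\{-m\alpha\}$ that interval lies) is precisely that derivation, with the endpoint conventions handled appropriately.
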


\section{Abelian Powers in Sturmian Words}\label{sec:ab_pow_and_ab_repet}

The results of the previous section can be used to give tight bounds on the lengths of abelian powers in Sturmian words.

The next results extend to the abelian setting analogous results obtained for ordinary powers in \cite{Mi89}.
First, observe that an abelian power in a Sturmian word is a concatenation of factors of the same length having equal Parikh vectors, that is, these factors are all heavy or all light.

The next proposition follows directly from Propositions \ref{prima} and \ref{pro:main}.

\begin{proposition}\label{pro:int}
Let $s_{\alpha,\rho}=a_{0}a_{1}a_{2}\cdots$ be a Sturmian word of angle $\alpha$. Then the factor $a_{n}\cdots a_{n+m-1}\cdots a_{n+km-1}$ is an abelian power of period $m$ and exponent
$k\geq 2$ starting at position $n$ if and only if the $k$ points $\{\rho+(n+im)\alpha\}$, $0\leq i \leq k-1$, are all either in the interval $I(0,\{-m\alpha\})$ or in the interval $I(\{-m\alpha\},1)$.
\end{proposition}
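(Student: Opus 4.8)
The plan is to unwind the definition of an abelian power in a Sturmian word using the two propositions already established. Recall that the factor $a_n \cdots a_{n+km-1}$ is an abelian power of period $m$ and exponent $k$ precisely when its $k$ consecutive blocks of length $m$, namely $b_i := a_{n+im} \cdots a_{n+(i+1)m-1}$ for $0 \leq i \leq k-1$, all have the same Parikh vector. Since every length-$m$ factor of a Sturmian word of angle $\alpha$ has one of only two possible Parikh vectors --- that of a light factor or that of a heavy factor, as recalled before Proposition~\ref{pro:main} --- having all the same Parikh vector is equivalent to all $b_i$ being light, or all $b_i$ being heavy.

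Next I would identify, for each $i$, the starting position of the block $b_i$: it occurs in $s_{\alpha,\rho}$ at position $n+im$. By Corollary~\ref{cor:hl} (equivalently, by combining Propositions~\ref{prima} and \ref{pro:main}), the block $b_i$ is heavy if $\{\rho + (n+im)\alpha\} > \{-m\alpha\}$ and light if $\{\rho + (n+im)\alpha\} < \{-m\alpha\}$; that is, $b_i$ is heavy exactly when the point $\{\rho + (n+im)\alpha\}$ lies in the interval $I(\{-m\alpha\},1)$ and light exactly when it lies in $I(0,\{-m\alpha\})$. (Here one should note that none of these points can equal $\{-m\alpha\}$ itself, since by construction $\{-m\alpha\}$ is an endpoint of the partition intervals $L_i(m)$ and the one-sided convention in the definition of $I(\cdot,\cdot)$ assigns each point to exactly one of the two intervals; so "light or heavy" is an exhaustive and exclusive dichotomy.) Therefore "all $b_i$ heavy" is equivalent to "all $k$ points $\{\rho+(n+im)\alpha\}$, $0 \leq i \leq k-1$, lie in $I(\{-m\alpha\},1)$", and similarly "all $b_i$ light" corresponds to all these points lying in $I(0,\{-m\alpha\})$. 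Assembling the two cases gives exactly the stated equivalence.

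The main (and essentially only) obstacle is a bookkeeping subtlety rather than a genuine difficulty: one must make sure the endpoint conventions for the intervals $I_{\sa{b}}$, $I_{\sa{a}}$ and hence for $I(0,\{-m\alpha\})$ and $I(\{-m\alpha\},1)$ are handled consistently, so that the dichotomy light/heavy genuinely covers all possible positions of the rotated point and the two intervals in the statement are literally the intervals $L_i(m) \subset I(0,\{-m\alpha\})$ or $L_i(m) \subset I(\{-m\alpha\},1)$ from Proposition~\ref{pro:main}. Once this is checked, the proof is just the composition of Proposition~\ref{prima} (which controls which length-$m$ factor appears at position $n+im$ via the value $\{\rho+(n+im)\alpha\}$) with Proposition~\ref{pro:main} (which tells us whether that factor is light or heavy from the containing subinterval), exactly as the sentence preceding the proposition ("follows directly from Propositions~\ref{prima} and \ref{pro:main}") indicates. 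So the write-up will be short: state the block decomposition, invoke Corollary~\ref{cor:hl} for each block, and observe that equal Parikh vectors $\Leftrightarrow$ all light or all heavy $\Leftrightarrow$ all points on the same side of $\{-m\alpha\}$.
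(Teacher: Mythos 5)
Your argument is correct and is precisely the derivation the paper intends: the paper offers no separate proof of Proposition~\ref{pro:int}, saying only that it follows directly from Propositions~\ref{prima} and \ref{pro:main}, and your block decomposition together with Corollary~\ref{cor:hl} is exactly that composition. One small correction to your parenthetical: the points $\{\rho+(n+im)\alpha\}$ \emph{can} equal $\{-m\alpha\}$ (this is exactly the exceptional situation treated later in Theorem~\ref{the:maingab}, cases \emph{(iii)} and \emph{(iv)}), but, as you yourself then observe, the half-open convention in $I(\cdot,\cdot)$ assigns such a point unambiguously to one of the two intervals, so the light/heavy dichotomy remains exhaustive and exclusive and your proof is unaffected.
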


In fact, we can state something more precise. 

\begin{lemma}\label{lem:ordered}
The $k$ points $\{\rho+(n+im)\alpha\}$, $0\leq i \leq k-1$, of Proposition \ref{pro:int} are naturally ordered. That is to say: 
\begin{itemize}
 \item if $\{m\alpha\} < 1/2$, then they are all in the subinterval 
$I(0, \{-m\alpha\})$ and $$\{\rho+n\alpha\}<\{\rho+(n+m)\alpha\}< \cdots < \{\rho+(n+(k-1)m)\alpha\};$$ 
\item if  $\{m\alpha\}> 1/2$, then they are all in the interval $I(\{-m\alpha\},1)$ 
and $$\{\rho+n\alpha\} > \{\rho+(n+m)\alpha\}> \cdots > \{\rho+(n+(k-1)m)\alpha\}.$$
\end{itemize}
\end{lemma}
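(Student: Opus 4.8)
The plan is to reduce everything to the elementary fact about the three-distance behaviour of the points $\{\rho + (n+im)\alpha\}$, which are obtained from one another by repeatedly adding the fixed rotation $m\alpha$. By Proposition~\ref{pro:int} we already know the $k$ points all lie in the same one of the two intervals $I(0,\{-m\alpha\})$ or $I(\{-m\alpha\},1)$; what remains is to show they appear there in monotone order, with the direction of monotonicity governed by whether $\{m\alpha\}<1/2$ or $\{m\alpha\}>1/2$. (Note $\{m\alpha\}\neq 1/2$ since $\alpha$ is irrational.)

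First I would set $\beta = m\alpha$ and $x_i = \{\rho + n\alpha + i\beta\}$ for $0\le i\le k-1$, so $x_{i+1} = \{x_i + \beta\}$. Consider the case $\{\beta\}<1/2$, i.e.\ $0<\{\beta\}<1/2$; here $\{-m\alpha\} = 1-\{\beta\} > 1/2$, so the common interval in which the $x_i$ sit is $I(0,1-\{\beta\})$, an interval of length exactly $1-\{\beta\}$. The key observation is that adding $\beta$ to a point of $[0,1-\{\beta\})$ either lands in $[\{\beta\},1)$ (no wraparound, so the value increases by exactly $\{\beta\}$) or the point was already in $[1-\{\beta\},1)$, which is excluded. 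Hence, as long as all of $x_0,\dots,x_{k-1}$ lie in $I(0,1-\{\beta\})$, we must have $x_{i+1} = x_i + \{\beta\}$ with no reduction mod $1$ for each $i<k-1$; consequently $x_0 < x_1 < \cdots < x_{k-1}$, which is the claimed chain of strict inequalities. I would spell out the half-open endpoint bookkeeping (the $I(\cdot,\cdot)$ convention fixes which endpoint is included) just enough to confirm strictness. The case $\{\beta\}>1/2$ is symmetric: writing $\{-\beta\} = 1-\{\beta\} < 1/2$ and applying the same argument to the points $\{-(\rho+n\alpha) - i\beta\}$, or equivalently running the previous argument with $\beta$ replaced by $-\beta$, gives that the $x_i$ decrease by exactly $\{-\beta\}=1-\{\beta\}$ at each step, so $x_0 > x_1 > \cdots > x_{k-1}$.

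The only real obstacle is making the "no wraparound" step airtight, i.e.\ verifying that a point of the relevant interval, when shifted by $\pm\{\beta\}$, cannot jump over the forbidden region and re-enter the interval from the other side — but this is immediate once one notes that the interval has length $1-\{\beta\}$ and the shift has magnitude $\{\beta\}$, so the image of the interval under the shift sweeps out a set of total length $1-\{\beta\}+\{\beta\}$ that crosses the "seam" at $0$ exactly once; a point whose image wraps around is precisely a point already outside the interval. Everything else is routine arithmetic with fractional parts, and one can even phrase the whole argument as: Proposition~\ref{pro:int} guarantees no point of the orbit segment hits the excluded arc, hence the map $x\mapsto\{x+\beta\}$ restricted to the visited points is the honest translation $x\mapsto x+\{\beta\}$ (resp.\ $x\mapsto x-\{-\beta\}$), which is order-preserving (resp.\ order-reversing) — wait, a translation is always order-preserving, so in the second case the correct phrasing is that the points translate by $-\{-\beta\}$ and hence form a decreasing sequence. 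I would present it in the first ("honest translation") form to keep the bookkeeping minimal.
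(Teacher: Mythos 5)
Your argument for the monotonicity itself is exactly the paper's: for a point $x$ in $I(0,\{-m\alpha\})$ one has $x+\{m\alpha\}<1$, so passing to the fractional part involves no reduction modulo $1$ and the points increase by exactly $\{m\alpha\}$ at each step (symmetrically, they decrease by $\{-m\alpha\}$ in the other case). That part, including your remarks on the half-open endpoint bookkeeping, is sound.

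There is, however, one step you assert rather than prove, and it is a claimed part of the lemma: that when $\{m\alpha\}<1/2$ the points lie in $I(0,\{-m\alpha\})$ rather than in $I(\{-m\alpha\},1)$. Proposition \ref{pro:int} only tells you they all lie in one of the two intervals; your justification ``$\{-m\alpha\}=1-\{\beta\}>1/2$, so the common interval in which the $x_i$ sit is $I(0,1-\{\beta\})$'' is a non sequitur --- nothing about the length of the arcs by itself forces the orbit segment into the longer one. The correct reason, which occupies the first half of the paper's proof, uses $k\geq 2$: if $x_0\in I(\{-m\alpha\},1)$, then $x_0+\{m\alpha\}\geq 1$, so $x_1=x_0+\{m\alpha\}-1<\{m\alpha\}<\{-m\alpha\}$, i.e.\ $x_1$ lands in the other interval, contradicting Proposition \ref{pro:int}. (Note that the claim about which interval is occupied would be false for $k=1$, so the hypothesis $k\geq 2$ must be invoked somewhere.) This is a one-line fix using exactly the wraparound observation you already have, but as written your proof leaves this half of the statement unestablished. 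With that inserted, your argument coincides with the paper's.
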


\begin{proof}
We prove only the first part; the latter part is similar. Recall that $k \geq 2$. If
$\{\rho+n\alpha\} \in I(\{-m\alpha\},1)$, then $\{\rho+(n+m)\alpha\} \notin I(0,\{-m\alpha\})$ because
$\{m\alpha\} < 1/2$. Thus, by Proposition \ref{pro:int} we conclude that $\{\rho+n\alpha\} \in I(0,\{-m\alpha\})$.
Therefore, by assumption the points
$\{\rho+(n+im)\alpha\}$, $0\leq i \leq k-1$, are in $I(0,\{-m\alpha\})$. Let then $i < k-1$. Since
$\{\rho+(n+im)\alpha\} < \{-m\alpha\}$, it follows that $\{\rho+(n+im)\alpha\} + \{m\alpha\} < 1$, so
$\{\rho+(n+(i+1)m)\alpha\} = \{\rho+(n+im)\alpha\} + \{m\alpha\} > \{\rho+(n+im)\alpha\}$. The conclusion follows.
\end{proof}

\begin{example}
 Let $f=a_{0}a_{1}a_{2}\cdots$ be the Fibonacci word. The factor
 \[
 a_{9}a_{10}\cdots a_{23}=\sa{baababaababaaba}
 \]
 is an abelian power of period $m=5$ and exponent $k=3$ (it is also an ordinary power, but this is irrelevant here). The sequence $\{\rho+9\alpha\}$, $\{\rho+14\alpha\}$, $\{\rho+19\alpha\}$, i.e., the sequence $\{10(\phi-1)\}$, $\{15(\phi-1)\}$, $\{20(\phi-1)\}$, is (approximately) equal to the sequence $0.180$, $0.271$, $0.361$. This sequence is increasing---agreeing with Lemma \ref{lem:ordered}, since $\{m\alpha\}=\{5(\phi-1)\}\approx 0.090<0.5$---and is contained in the interval $I(0,\{-m\alpha\})\approx I(0,0.910)$.
\end{example}

\begin{remark}
 Notice that all the Sturmian words with the same rotation angle $\alpha$ have the same abelian powers and the same abelian repetitions---of course starting at different positions, depending on the value of the initial point $\rho$.
\end{remark}

In the following theorem we characterize the positions of occurrence of the abelian powers having given period and exponent.
For most positions the case \emph{(i)} of the next theorem applies, but due to the choice involved in coding the points $0$
and $1-\alpha$, the special points $\{\{-rm\alpha\}\colon r \geq 0\}$ require specific attention.

\begin{theorem}\label{the:maingab}
  Let $s_{\alpha,\rho} = a_0 a_1 a_2 \cdots$ be a Sturmian word of angle $\alpha$. Consider the factor
  $w = a_n \cdots a_{n+m-1} \cdots a_{n+km-1}$ starting at position $n$ of $s_{\alpha,\rho}$.
  \begin{enumerate}[(i)]
    \item If $\{\rho + n\alpha\} \notin \{\{-rm\alpha\}\colon r \geq 0\}$, then the factor $w$ is an abelian power of
          period $m$ and exponent $k \geq 2$ if and only if $\{\rho+n\alpha\} < 1-k\{m\alpha\}$ (if
          $\{m\alpha\} < 1/2$) or $\{\rho+n\alpha\} > k\{-m\alpha\}$ (if $\{m\alpha\} > 1/2$).
    \item If $\{\rho + n\alpha\} = 0$, then the factor $w$ is an abelian power of period $m$ and exponent $k \geq 2$
          if and only if $0 \in I_{\sa{b}}$ and $k\{m\alpha\} < 1$ (if $\{m\alpha\} < 1/2$) or $0 \notin I_{\sa{b}}$
          and $k\{-m\alpha\} < 1$ (if $\{m\alpha\} > 1/2$).
    \item If $\{\rho + n\alpha\} = \{-rm\alpha\}$ for some $r > 0$, then the factor $w$ is an abelian power of period
          $m$ and exponent $k$ such that $2 \leq k < r$ if and only if $\{\rho+n\alpha\} < 1-k\{m\alpha\}$ (if
          $\{m\alpha\} < 1/2$) or $\{\rho+n\alpha\} > k\{-m\alpha\}$ (if $\{m\alpha\} > 1/2$).
    \item If $\{\rho + n\alpha\} = \{-rm\alpha\}$ for some $r > 0$, then the factor $w$ is an abelian power of period
          $m$ and exponent $k$ such that $k \geq r \geq 2$ if and only if $0 \notin I_{\sa{b}}$ and
          $\{\rho+n\alpha\} < 1-k\{m\alpha\}$ (if $\{m\alpha\} < 1/2$) or $0 \in I_{\sa{b}}$ and
          $\{\rho+n\alpha\} > k\{-m\alpha\}$ (if $\{m\alpha\} > 1/2$).
  \end{enumerate}
\end{theorem}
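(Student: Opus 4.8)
The plan is to reduce everything to Proposition~\ref{pro:int} together with the ordering information in Lemma~\ref{lem:ordered}, and then to handle carefully the endpoint ambiguities caused by the choice of $0 \in I_{\sa{b}}$ or $0 \notin I_{\sa{b}}$. I will treat only the case $\{m\alpha\} < 1/2$; the case $\{m\alpha\} > 1/2$ follows by the symmetry $\{-m\alpha\} = 1 - \{m\alpha\}$ and an obvious reflection of the torus, exactly as in Lemma~\ref{lem:ordered}. So assume $\{m\alpha\} < 1/2$. By Proposition~\ref{pro:int}, $w$ is an abelian power of period $m$ and exponent $k \geq 2$ if and only if the $k$ points $\{\rho + (n+im)\alpha\}$, $0 \leq i \leq k-1$, all lie in $I(0,\{-m\alpha\})$ (once one of them lies there, Lemma~\ref{lem:ordered} forces the rest, since $\{m\alpha\}<1/2$). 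Writing $x = \{\rho + n\alpha\}$ and using that these $k$ points are $x, x+\{m\alpha\}, \ldots, x + (k-1)\{m\alpha\}$ whenever no wraparound occurs, the condition that all of them stay strictly below $\{-m\alpha\}$ is equivalent to $x + (k-1)\{m\alpha\} < \{-m\alpha\} = 1 - \{m\alpha\}$, i.e. $x < 1 - k\{m\alpha\}$, provided we are careful about the boundary points of $I(0,\{-m\alpha\})$, namely $0$ and $\{-m\alpha\}$.

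The main obstacle is precisely the bookkeeping at these boundary points, which is what forces the four-way case split. First I would establish case~(i): if $x \notin \{\{-rm\alpha\} : r \geq 0\}$, then none of the $k$ points $x + i\{m\alpha\}$ equals $0$ or $\{-m\alpha\}$ (the former because $x \neq \{-im\alpha\}$ would be violated backwards, the latter because $x \neq \{-(i+1)m\alpha\}$), so the interval $I(0,\{-m\alpha\})$ may be treated as genuinely open at both ends for these points, and the chain of equivalences above gives exactly $x < 1 - k\{m\alpha\}$. Here I also need to note $x + i\{m\alpha\}$ never wraps past $1$ for $i \leq k-1$ when the condition holds, which is immediate from $x + (k-1)\{m\alpha\} < 1 - \{m\alpha\} < 1$.

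Next, case~(ii): when $x = 0$, whether the point $0$ itself lies in $I(0,\{-m\alpha\})$ depends on the coding convention. If $0 \in I_{\sa{b}}$ then $I(0,\{-m\alpha\}) = [0,\{-m\alpha\})$ contains $0$, and the remaining points $0 + i\{m\alpha\}$, $1 \leq i \leq k-1$, are in the interior, so the condition becomes $(k-1)\{m\alpha\} < \{-m\alpha\}$, i.e. $k\{m\alpha\} < 1$; if $0 \notin I_{\sa{b}}$ then $0 \notin I(0,\{-m\alpha\})$ and $w$ is never such an abelian power. For cases~(iii) and~(iv), write $x = \{-rm\alpha\}$ with $r > 0$. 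The delicate point is the largest index $i$ at which the ``problematic'' endpoint is hit: among $x, x+\{m\alpha\}, \ldots, x+(k-1)\{m\alpha\}$, the point $x + rm\alpha \equiv 0$ appears exactly when $i = r \leq k-1$, i.e. when $k \geq r+1$; wait---here one must be careful about whether $\{-rm\alpha\} + r\{m\alpha\}$ actually reduces to $0$ or to $1$, i.e. about the accumulated wraparound. The clean way is: for $i < r$, the point $x + i\{m\alpha\}$ lies strictly between $0$ and $\{-m\alpha\}$ (no endpoint issue), and the point $x + r\{m\alpha\}$ is $\equiv 0$. Hence if $2 \leq k \leq r$ (so the index $r$ is not reached), the analysis of case~(i) applies verbatim and we get $x < 1 - k\{m\alpha\}$, which is case~(iii). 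If $k \geq r+1$ wait, the statement says $k \geq r$; I need to recheck whether $i=r$ is included when $k=r$, since the indices run $0 \leq i \leq k-1 = r-1$, so $i=r$ is \emph{not} reached when $k=r$ either---so case~(iii) should really cover $2 \leq k \leq r$ and case~(iv) cover $k \geq r+1$. I will need to reconcile this with the stated ``$k < r$'' / ``$k \geq r$'' boundary, which suggests the intended convention makes the point at index $i=r-1$, not $i=r$, the one landing on the endpoint $\{-m\alpha\}$ (since $x + (r-1)\{m\alpha\} \equiv \{-rm\alpha\} + (r-1)\{m\alpha\}$, and $\{-m\alpha\} = \{-rm\alpha\} + (r-1)\{m\alpha\}$ exactly when $\{-m\alpha\} + \{-(r-1)m\alpha\} = \{-rm\alpha\}$, i.e. when no wrap occurs in forming $\{-rm\alpha\}$ from $r$ copies of $\{-m\alpha\}$, which need not hold). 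So the genuinely hard part is pinning down, as a function of $r$, which index $i_0 \in \{0,\ldots\}$ satisfies $x + i_0\{m\alpha\} \equiv \{-m\alpha\}$ or $\equiv 0$, and I expect this is controlled by how $\{-rm\alpha\}$ decomposes; once that index is identified, whether that boundary point counts as ``inside'' $I(0,\{-m\alpha\})$ is governed by the convention ($[\,,)$ versus $(\,,]$), giving the dependence on $0 \in I_{\sa{b}}$ versus $0 \notin I_{\sa{b}}$ in cases~(iii) and~(iv), with the inequality $x < 1 - k\{m\alpha\}$ in each case read with the appropriate strictness. I would finish by checking the two conventions against a small example (e.g. the Fibonacci word) to make sure the strict/non-strict inequalities and the $r$ versus $r+1$ threshold are stated consistently.
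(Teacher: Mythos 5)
Your overall strategy---reduce everything to Proposition~\ref{pro:int} together with the ordering in Lemma~\ref{lem:ordered} and then track which of the $k$ points can land on an endpoint of $I(0,\{-m\alpha\})$---is exactly the paper's, and your cases (i) and (ii) match the paper's argument. The genuine gap is in cases (iii) and (iv): you explicitly leave open ``which index $i_0$ satisfies $x+i_0\{m\alpha\}\equiv\{-m\alpha\}$ or $\equiv 0$,'' you first place the threshold at $k\geq r+1$, and you propose to settle the discrepancy with the stated $k<r$ versus $k\geq r$ split by testing examples. That is the crux of the theorem and must be proved, not checked empirically.

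The resolution is a one-line identity on the torus that your worry about ``accumulated wraparound'' obscures. Since $\rho+n\alpha\equiv -rm\alpha \pmod{1}$, the torus point at index $i$ is $\{\rho+(n+im)\alpha\}=\{(i-r)m\alpha\}$. For $0\leq i\leq r-2$ this is $\{-(r-i)m\alpha\}$ with $r-i\geq 2$, which by irrationality of $\alpha$ equals neither $0$ nor $\{-m\alpha\}$, so no coding ambiguity arises; at $i=r-1$ it is \emph{exactly} the endpoint $\{-m\alpha\}$; at $i=r$ it is $0$. No real-number bookkeeping of the sum $\{-rm\alpha\}+(r-1)\{m\alpha\}$ is needed: the fractional part is reduced modulo $1$ by definition, and when the abelian power does exist Lemma~\ref{lem:ordered} already forbids wraparound among the $k$ points. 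The first special index is therefore $r-1$, which lies in $\{0,\dots,k-1\}$ precisely when $k\geq r$. For $k<r$ no point touches an endpoint and case (i) applies verbatim, giving (iii); for $k\geq r$ the element $\{-m\alpha\}$ is among the $k$ points and belongs to $I(0,\{-m\alpha\})$ exactly when that interval is $(0,\{-m\alpha\}]$, i.e.\ when $0\notin I_{\sa{b}}$, giving (iv), after which the inequality is extracted as in case (i). Note also that your intermediate claim ``for $i<r$ the point lies strictly between $0$ and $\{-m\alpha\}$'' fails precisely at $i=r-1$, which is why your argument, as written, would misplace the threshold.
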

\begin{proof}
  \emph{(i)} Suppose that $\{\rho + n\alpha\} \notin \{\{-rm\alpha\}\colon r \geq 0\}$ and $\{m\alpha\} < 1/2$ (the
  case $\{m\alpha\} > 1/2$ is analogous). Say the factor $w$ is an abelian power of period $m$ and exponent
  $k \geq 2$. Since $k \geq 2$ all of the points $\{\rho + \{n + im\}\alpha\}$, $0 \leq i \leq k - 1$, are by Lemma
  \ref{lem:ordered} naturally ordered in the interval $I(0,\{-m\alpha\})$. Moreover, these points are all interior
  points of the interval $I(0,\{-m\alpha\})$, so the coding is unambiguous. The distance between any two consecutive
  such points is $\{m\alpha\}$. Therefore, $\{\rho + n\alpha\} + (k-1)\{m\alpha\}$ must be smaller than the length of
  the interval $I(0,\{-m\alpha\})$, which is equal to $\{-m\alpha\} = 1 - \{m\alpha\}$. From this we derive that
  $\{\rho+n\alpha\} < 1 - k\{m\alpha\}$.

  Conversely if $\{\rho+n\alpha\} < 1 - k\{m\alpha\}$ for $k \geq 2$, then surely the points
  $\{\rho + \{n + im\}\alpha\}$, $0 \leq i \leq k - 1$, all are interior points of the interval $I(0,\{-m\alpha\})$, so
  $w$ is indeed an abelian power of period $m$ and exponent $k$ by Proposition \ref{pro:int}.

  \emph{(ii)} Assume that $\{\rho + n\alpha\} = 0$ and $\{m\alpha\} < 1/2$ (the case $\{m\alpha\} > 1/2$ is analogous).
  Suppose that the factor $w$ is an abelian power of period $m$ and exponent $k \geq 2$. Like above in the case
  \emph{(i)}, all of the points $\{\rho + \{n + im\}\alpha\}$, $0 \leq i \leq k - 1$, are naturally ordered in
  the interval $I(0,\{-m\alpha\})$. Therefore, $0 \in I_{\sa{b}}$. Proceeding as above, we see that
  $(k-1)\{m\alpha\} < \{-m\alpha\}$, that is, $k\{m\alpha\} < 1$. The converse is easily seen to hold.

  \emph{(iii)} This case reduces directly to the case \emph{(i)} as none of the points $\{\rho + \{n+im\}\alpha\}$,
  $0 \leq i \leq k - 1$, equal neither of the two problematic points $0$ and $1-\alpha$ whose codings depend on the
  choice of the intervals $I_{\sa{a}}$ and $I_{\sa{b}}$.

  \emph{(iv)} Assume that $\{\rho + n\alpha\} = \{-rm\alpha\}$ for some $r > 0$. Suppose moreover that
  $\{m\alpha\} < 1/2$; the case $\{m\alpha\} > 1/2$ is analogous. Assume first that the factor $w$ is an abelian power
  of period $m$ and exponent $k$ such that $k \geq r \geq 2$. Again, the points $\{\rho + \{n + im\}\alpha\}$,
  $0 \leq i \leq k - 1$, are naturally ordered in the interval $I(0,\{-m\alpha\})$. Thus,
  $\{\rho+(n+(r-1)m)\alpha\} = \{-m\alpha\} \in I(0,\{-m\alpha\})$, that is to say, $0 \notin I_{\sa{b}}$. Proceeding
  exactly as in the case \emph{(i)}, we see that $\{\rho+n\alpha\} < 1-k\{m\alpha\}$.

  Conversely if $0 \notin I_{\sa{b}}$ and $\{\rho+n\alpha\} < 1-k\{m\alpha\}$ with $k \geq r \geq 2$, then again
  $w$ is an abelian power of period $m$ and exponent $k$ by Proposition \ref{pro:int}.
\end{proof}

\begin{example}
 An abelian power of period $2$ and exponent $4$ occurs in the Fibonacci word at every position $n$ such that $\{(n+1)(\phi-1)\}<1-4\{2(\phi-1)\}\approx 0.056$. The first such $n$ are $12$, $33$, $46$, $67$ and $88$.

 An abelian power of period $3$ and exponent $6$ occurs in the Fibonacci word at every position $n$ such that $\{(n+1)(\phi-1)\}>6\{-3(\phi-1)\}\approx 0.875$. The first such $n$ are $7$, $15$, $20$, $28$, $41$, $49$, $54$, $62$ and $70$.
\end{example}

Theorem \ref{the:maingab} allows us to effortlessly characterize the maximum exponent of an abelian power of period $m$.

\begin{theorem}\label{the:main1} 
  Let $s_\alpha$ be a Sturmian word of angle $\alpha$ and $m$ be a positive integer. Then $s_\alpha$ contains an
  abelian power of period $m$ and exponent $k \geq 2$ if and only if $\|m\alpha\| < \frac{1}{k}$. In particular,
  the maximum exponent $k_m$ of an abelian power of period $m$ in $s_\alpha$ is the largest integer $k$ such that
  $\|m\alpha\|< \frac{1}{k}$, i.e.,
  \begin{equation*}
    k_{m}=\left \lfloor \frac{1}{ \|m\alpha\| } \right \rfloor.
  \end{equation*}
\end{theorem}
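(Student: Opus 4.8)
The statement asserts that $s_\alpha$ contains an abelian power of period $m$ and exponent $k\geq 2$ if and only if $\|m\alpha\| < 1/k$, with the maximum exponent then being $k_m = \lfloor 1/\|m\alpha\|\rfloor$. The plan is to derive this directly from Theorem~\ref{the:maingab}, essentially by asking for which $k$ there exists \emph{some} starting position $n$ making the relevant inequality satisfiable.

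First I would reduce to one of the two symmetric cases, say $\{m\alpha\} < 1/2$, so that $\|m\alpha\| = \{m\alpha\}$; the case $\{m\alpha\} > 1/2$ is handled identically after replacing $\{m\alpha\}$ by $\{-m\alpha\} = \|m\alpha\|$ throughout (the case $\{m\alpha\} = 1/2$ cannot occur since $\alpha$ is irrational, but even if one allows it, $\|m\alpha\|=1/2$ and the condition $k\geq 2$ already forces $\|m\alpha\|<1/k$ to fail, consistent with $k_m=1$). With this reduction, by Theorem~\ref{the:maingab}(i) a generic position $n$ (one with $\{\rho+n\alpha\}\notin\{\{-rm\alpha\}:r\geq 0\}$) carries an abelian power of period $m$ and exponent $k$ if and only if $\{\rho+n\alpha\} < 1 - k\{m\alpha\}$. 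Such a position exists precisely when the interval $(0,\,1-k\{m\alpha\})$ is nonempty and meets the orbit $\{\{\rho+n\alpha\}:n\geq 0\}$. Nonemptiness is exactly $1 - k\{m\alpha\} > 0$, i.e. $\{m\alpha\} < 1/k$, i.e. $\|m\alpha\| < 1/k$. For the "meets the orbit" part, I would invoke that the orbit of an irrational rotation is dense in $I$ (equivalently, that $s_\alpha$ has factors with every prescribed finite behavior, which is also implicit in the Sturmian bijection of Section~\ref{sec:Sturmian}): any nonempty open subinterval of $I$ contains infinitely many orbit points, and in particular contains orbit points lying outside the countable exceptional set $\{\{-rm\alpha\}:r\geq 0\}$, so the genericity hypothesis of Theorem~\ref{the:maingab}(i) can be met. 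Hence existence of an abelian power of period $m$ and exponent $k\geq 2$ is equivalent to $\|m\alpha\| < 1/k$.

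For the converse direction of the equivalence (if $\|m\alpha\| < 1/k$ then such a power exists), the same density argument supplies a position $n$ with $\{\rho+n\alpha\}$ in the nonempty interval $(0, 1-k\{m\alpha\})$ and avoiding the exceptional set, and Theorem~\ref{the:maingab}(i) (the "if" half, which rests on Proposition~\ref{pro:int}) then produces the abelian power. Finally, the formula for $k_m$ is immediate: $k_m$ is by definition the largest integer $k\geq 2$ for which an abelian power of period $m$ and exponent $k$ exists, hence the largest $k$ with $\|m\alpha\| < 1/k$, i.e. the largest $k$ with $k < 1/\|m\alpha\|$, which is $\lfloor 1/\|m\alpha\|\rfloor$ — here one should note $\|m\alpha\|\neq 0$ since $\alpha$ is irrational, so $1/\|m\alpha\|$ is well defined and is itself irrational, so the floor is strict; and if this floor happens to be $1$ then there is no abelian power of exponent $\geq 2$ of period $m$, consistent with the convention that $k_m=1$ records the trivial (degenerated) case.

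I do not expect a serious obstacle here: the theorem is a clean corollary of Theorem~\ref{the:maingab}. The only point requiring a little care is making the "some position $n$ works" step rigorous — one must combine the density of the irrational orbit with the fact that the exceptional set in Theorem~\ref{the:maingab} is countable, so that a generic (case (i)) position realizing the desired inequality is always available whenever the target interval is nonempty. A secondary bookkeeping point is to state the two symmetric cases $\{m\alpha\}<1/2$ and $\{m\alpha\}>1/2$ uniformly in terms of $\|m\alpha\|$, which is transparent once one observes that in the first case the binding constraint is $\{\rho+n\alpha\}<1-k\{m\alpha\}=k\{-m\alpha\}-(k-1)$ living in an interval of length $1-k\{m\alpha\}$, and in the second case the constraint $\{\rho+n\alpha\}>k\{-m\alpha\}$ lives in an interval of length $1-k\{-m\alpha\}$; in both cases the length is $1-k\|m\alpha\|$, which is positive exactly when $\|m\alpha\|<1/k$.
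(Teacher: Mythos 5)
Your proof is correct, but it takes a genuinely different route from the paper's. You work with case \emph{(i)} of Theorem~\ref{the:maingab} at a generic position and argue by density of the orbit $\{\rho+n\alpha\}$ that the target interval $I(0,1-k\{m\alpha\})$ (or $I(k\{-m\alpha\},1)$) is hit whenever it is nonempty, which happens exactly when $\|m\alpha\|<1/k$. The paper instead invokes case \emph{(ii)}: it notes that the extremal occurrence can be realized as a \emph{prefix} of $\underline{s}_{\alpha,0}$ or $\overline{s}_{\alpha,0}$ (depending on whether $\{m\alpha\}<1/2$ or $>1/2$), where the starting point is $\{\rho+n\alpha\}=0$ and the exponent is exactly $\lfloor 1/\|m\alpha\|\rfloor$; since all Sturmian words of angle $\alpha$ share the same factors, this power occurs in every $s_\alpha$. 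The paper's argument is shorter and constructive (it exhibits where the maximal power sits), while yours is an existence argument that works directly inside an arbitrary $s_{\alpha,\rho}$ without appealing to the shared-factor-set fact. One small imprecision in your write-up: the inference ``the subinterval contains infinitely many orbit points, hence some outside the countable exceptional set'' is not valid as stated, since the orbit is itself countable and the exceptional set $\{\{-rm\alpha\}\colon r\geq 0\}$ is dense. The correct (and easy) fix is that only \emph{finitely many} positions $n$ can be exceptional: $\{\rho+n\alpha\}=\{-rm\alpha\}$ forces $\rho=\{-(n+rm)\alpha\}$, and by irrationality of $\alpha$ the integer $n+rm$ is then uniquely determined by $\rho$, leaving at most finitely many admissible pairs $(n,r)$ with $n,r\geq 0$. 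With that patch (or by simply falling back on cases \emph{(ii)}--\emph{(iv)} of Theorem~\ref{the:maingab}, which yield the same necessary and sufficient condition $k\|m\alpha\|<1$ at exceptional positions), your argument is complete.
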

\begin{proof}
  Keeping the period $m$ fixed, it is evident from Theorem \ref{the:maingab} that in order to maximize the
  exponent, we can consider the prefixes of the Sturmian words $\underline{s}_{\alpha,0}$ and
  $\overline{s}_{\alpha,0}$. If $\{m\alpha\} < 1/2$, then the word
  $\underline{s}_{\alpha,0} = \sa{b}s_{\alpha,\alpha}$ has an abelian power of period $m$ and maximum exponent
  $\left\lfloor 1/\|m\alpha\| \right\rfloor$ as a prefix, while if $\{m\alpha\} > 1/2$, then the word
  $\overline{s}_{\alpha,0} = \sa{a}s_{\alpha,\alpha}$ starts with an abelian power of period $m$ and maximum exponent
  $\left\lfloor 1/\|m\alpha\| \right\rfloor$.
\end{proof}

\begin{example}
In Table \ref{tab:fici2} we give the first values of the sequence $k_{m}$ for the Fibonacci word $f$. We have $k_{2}=4$, since $\{2(\phi-1)\}\approx 0.236$, so the largest $k$ such that $\{2(\phi-1)\}< 1/k$ is $4$. Indeed, $\sa{babaabab}$ is an abelian power of period $2$ and exponent $4$, and the reader can verify that no factor of $f$ of length $10$ is an abelian power of period $2$.

For $m=3$, since $\{-3(\phi-1)\}\approx 0.146$, the largest $k$ such that $\{-3(\phi-1)\}< 1/k$ is $6$. Indeed, $\sa{aabaababaabaababaa}$ is an abelian power of period $3$ and exponent $6$, and the reader can verify that no factor of $f$ of length $21$ is an abelian power of period $3$. 
\end{example}

\begin{table}
\centering  
\begin{small}
\begin{raggedright}
\begin{tabular}{c *{30}{@{\hspace{3.1mm}}c}}
$m$\hspace{2mm} & \textbf{1} & \textbf{2} & \textbf{3} & 4 & \textbf{5} & 6 & 7 & \textbf{8} & 9 & 10 & 11 & 12 & \textbf{13} & 14 & 15 & 16 & 17 & 18 & 19 & 20 & \textbf{21}
\\
\hline \\
$k_{m}$\hspace{2mm} & \textbf{2} & \textbf{4} & \textbf{6} & 2 & \textbf{11} & 3 & 3 & \textbf{17} & 2 & 5 & 4 & 2 & \textbf{29} & 2 & 3 & 8 & 2 & 8 & 3 & 2 & \textbf{46}
\\
\hline \rule[0pt]{0pt}{12pt}
\end{tabular}
\end{raggedright}\caption{\label{tab:fici2} The first values of the maximum exponent $k_{m}$ of an abelian power of period $m$ in the Fibonacci word $f$. The values corresponding to the Fibonacci numbers are in bold.}
\end{small}
\end{table}

Next we consider the maximum exponent of an abelian power of given period and location. Again, save for the
exceptional points $\{\{-rm\alpha\}\colon r \geq 0\}$, the first formula of the next corollary suffices.

\begin{corollary}\label{cor:gab2}
  Let $s_{\alpha,\rho}=a_0 a_1 a_2 \cdots$ be a Sturmian word of angle $\alpha$,
  \begin{equation*}
    A = \left\lfloor \frac{\{-\rho-n\alpha\}}{\{m\alpha\}} \right\rfloor \ \text{ and } \ 
    B = \left\lfloor \frac{\{\rho+n\alpha\}}{\{-m\alpha\}} \right\rfloor.
  \end{equation*}
  Consider the maximum exponent $k_{m,n}$ of a (possibly degenerated) abelian
  power of period $m$ starting at position $n$ in $s_{\alpha,\rho}$.
  \begin{enumerate}[(i)]
    \item If $\{\rho + n\alpha\} \notin \{\{-rm\alpha\}\colon r \geq 0\}$, then
          \begin{equation*}
            k_{m,n} = \max\left( A,B \right).
          \end{equation*}
    \item If $\{\rho + n\alpha\} = 0$, then
          \begin{equation*}
            k_{m,n} = \begin{cases}
                        \left\lfloor 1/\{m\alpha\} \right\rfloor,  &\text{ if $0 \in I_{\sa{b}}$,} \\
                        \left\lfloor 1/\{-m\alpha\} \right\rfloor, &\text{ if $0 \notin I_{\sa{b}}$.}
                      \end{cases}
          \end{equation*}
    \item If $\{\rho + n\alpha\} = \{-rm\alpha\}$ for some $r > 0$ and $r > \max(A,B)$, then
          \begin{equation*}
            k_{m,n} = \max\left( A,B \right).
          \end{equation*}
    \item If $\{\rho + n\alpha\} = \{-rm\alpha\}$ for some $r > 0$ and $r \leq \max(A,B)$, then
          \begin{equation*}
            k_{m,n} = \max\left( A - \gamma, B + \gamma - 1 \right),
          \end{equation*}
          where
          \begin{equation*}
            \gamma = \begin{cases}
                       1, &\text{ if $0 \in I_{\sa{b}}$,} \\
                       0, &\text{ if $0 \notin I_{\sa{b}}$.}
                     \end{cases}
          \end{equation*}
  \end{enumerate}
\end{corollary}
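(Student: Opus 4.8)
The plan is to read off each case directly from Theorem \ref{the:maingab}, translating the inequalities characterizing abelian powers of exponent $k$ into the maximal such $k$. First I would treat the generic case \emph{(i)}. When $\{m\alpha\}<1/2$, Theorem \ref{the:maingab}\emph{(i)} says $w$ is an abelian power of period $m$ and exponent $k\geq 2$ iff $\{\rho+n\alpha\}<1-k\{m\alpha\}$, i.e.\ $k<(1-\{\rho+n\alpha\})/\{m\alpha\}=\{-\rho-n\alpha\}/\{m\alpha\}$; hence the largest admissible $k$ is $A=\lfloor\{-\rho-n\alpha\}/\{m\alpha\}\rfloor$, unless this floor is $<2$, in which case the degenerated power of exponent $1$ is the best one can do. Symmetrically, when $\{m\alpha\}>1/2$, the condition $\{\rho+n\alpha\}>k\{-m\alpha\}$ gives $k<\{\rho+n\alpha\}/\{-m\alpha\}$, so the maximum is $B$. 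The point is that when $\{m\alpha\}<1/2$ we have $\{-m\alpha\}>1/2$, so $B=\lfloor\{\rho+n\alpha\}/\{-m\alpha\}\rfloor\in\{0,1\}$, and similarly $A\in\{0,1\}$ when $\{m\alpha\}>1/2$; therefore writing $k_{m,n}=\max(A,B)$ uniformly covers both subcases and also correctly yields the degenerated exponent $1$ when no genuine abelian power of exponent $\geq 2$ starts at $n$. (One should note that a degenerated abelian power of period $m$ always ``starts'' everywhere, so $k_{m,n}\geq 1$ is automatic as long as $m\leq$ the available length, which is implicit.)

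Next I would handle case \emph{(ii)}, $\{\rho+n\alpha\}=0$: here $\{-\rho-n\alpha\}=1$ and $\{\rho+n\alpha\}=0$, so the formulas from \emph{(i)} would naively give $\lfloor 1/\{m\alpha\}\rfloor$ and $0$; but the coding of the point $0$ depends on whether $0\in I_{\sa{b}}$. By Theorem \ref{the:maingab}\emph{(ii)}, if $\{m\alpha\}<1/2$ then the exponent $k$ is achievable iff $0\in I_{\sa{b}}$ and $k\{m\alpha\}<1$; thus $k_{m,n}=\lfloor 1/\{m\alpha\}\rfloor$ when $0\in I_{\sa{b}}$, and when $0\notin I_{\sa{b}}$ the point $0=\{\rho+n\alpha\}$ lies in $I_{\sa{a}}$, so one instead counts along the interval $I(\{-m\alpha\},1)$ and gets $\lfloor 1/\{-m\alpha\}\rfloor$; the case $\{m\alpha\}>1/2$ is symmetric, giving the same two-line formula. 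Cases \emph{(iii)} and \emph{(iv)} concern $\{\rho+n\alpha\}=\{-rm\alpha\}$ for some $r>0$. When $r>\max(A,B)$, Theorem \ref{the:maingab}\emph{(iii)} applies for all relevant $k$ (since the attainable exponents are all $<r$), so the answer is exactly $\max(A,B)$ as in \emph{(i)}. When $r\leq\max(A,B)$, the exponent can reach or exceed $r$, and Theorem \ref{the:maingab}\emph{(iv)} modifies the threshold by one depending on whether $0\in I_{\sa{b}}$: the extra hypothesis ``$0\notin I_{\sa{b}}$'' (resp.\ ``$0\in I_{\sa{b}}$'') in \emph{(iv)} removes (resp.\ keeps) the extremal value, which shifts $A$ down by $\gamma$ and $B$ up by $\gamma-1$ according as $0\in I_{\sa{b}}$ or not. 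Taking $\max(A-\gamma,B+\gamma-1)$ records this shift in both subcases of $\{m\alpha\}\gtrless 1/2$ simultaneously.

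The only subtlety—and the step I expect to require the most care—is the bookkeeping in case \emph{(iv)}: one must check that the ``$\pm\gamma$'' adjustment produces exactly the right integer in each of the four combinations ($\{m\alpha\}<1/2$ or $>1/2$, and $0\in I_{\sa{b}}$ or not), and that it is consistent with case \emph{(iii)} at the boundary $r=\max(A,B)+1$ as well as with case \emph{(ii)} when $r=1$. This is a finite verification: in each combination one writes down which of the strict/non-strict inequalities of Theorem \ref{the:maingab} governs the extremal exponent, notes whether the critical point $\{-m\alpha\}$ (equivalently $0$ or $1-\alpha$ after rotating back by $(r-1)m$) is included in the relevant half-open interval under the given choice of $I_{\sa{b}}$, and concludes that the maximum exponent equals $A$ or $A-1$ (resp.\ $B$ or $B+1$) accordingly. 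No new ideas are needed beyond Theorem \ref{the:maingab}; the corollary is essentially its restatement ``solved for $k$''.
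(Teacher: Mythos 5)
Your proposal is correct and follows essentially the same route as the paper: the paper likewise reads all four formulas directly off Theorem \ref{the:maingab}, using the observation that (away from the exceptional points) $A \geq 1$ if and only if $B = 0$ so that $\max(A,B)$ covers both signs of $\{m\alpha\}-1/2$ at once, and then concentrates its entire written effort on exactly the case-\emph{(iv)} endpoint bookkeeping you flag as the delicate step. The only substantive piece you defer is that finite verification, which the paper does carry out — in particular the edge sub-case $A=1$ (equivalently $r=1$), where one checks that the Parikh vectors of the length-$m$ factors at positions $n$ and $n+m$ differ under \emph{either} choice of $I_{\sa{b}}$, so that $k_{m,n}=1=\max(A-\gamma,\,B+\gamma-1)$ — so you should make sure your "finite check" includes that degenerate configuration and not only the generic $A\geq 2$ one.
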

\begin{proof}
  The formulas follow directly from Theorem \ref{the:maingab}. Observe that if $\{\rho + n\alpha\} \neq 0$ and
  $\{\rho + n\alpha\} \neq \{-m\alpha\}$, then $A \geq 1$ if and only if $B = 0$. We show here how the case \emph{(iv)}
  is handled.

  Suppose that $\{\rho + n\alpha\} = \{-rm\alpha\}$ for some $r > 0$ and $r \leq \max(A,B)$. Assume that
  $\{m\alpha\} < 1/2$. By Theorem \ref{the:maingab} \emph{(iv)} there is an abelian power of period $m$ and maximum
  exponent $A \geq 2$ starting at position $n$ of $s_{\alpha,\rho}$ provided that $0 \notin I_{\sa{b}}$. If
  $0 \in I_{\sa{b}}$, then there is an abelian power of period $m$ and maximum exponent $A-1$ starting at
  position $n$ because the change of coding affects the Parikh vector of the factor of length $m$ starting at position
  $n+(A-1)m$. Therefore, $k_{m,n} = A - \gamma$ if $A > 1$. Notice that in this case $B + 1 - \gamma \leq 1$. If $A = 1$,
  then $r = 1$ by assumption, so $B = 1$. Since $A = 1$, the Parikh vectors of the factors of length $m$ starting at
  positions $n$ and $n+m$ are different when $0 \notin I_{\sa{b}}$. Since $\{\rho+(n+m)\alpha\} = 0$, the Parikh vectors
  of the factors of length $m$ starting at positions $n$ and $n+m$ are also different when $0 \in I_{\sa{b}}$. Therefore,
  $k_{m,n} = 1 = \max\left( A - \gamma, B + \gamma - 1 \right)$. The case $\{m\alpha\} > 1/2$ is similar.
\end{proof}

Hence, given a Sturmian word $s_{\alpha,\rho}$ we can compute the maximum length of an abelian power of any period $m$ starting at any position $n$ in $s_{\alpha,\rho}$. This length is precisely $m\cdot k_{m,n}$.

Corollary \ref{cor:gab2} implies the following result of Richomme, Saari and Zamboni \cite{Richomme201179}.

\begin{proposition}
  Let $s_\alpha$ be a Sturmian word of angle $\alpha$. For all $n \geq 0$ and $k \geq 1$ there is an abelian $k$-power
  starting at position $n$ of $s_\alpha$.
\end{proposition}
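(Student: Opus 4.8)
The plan is to read the statement off directly from Corollary~\ref{cor:gab2}. Write $s_\alpha=s_{\alpha,\rho}$, fix $n\ge 0$ and $k\ge 1$, and observe that the case $k=1$ is immediate (any single factor of length $m$ is a degenerated abelian $1$-power). So assume $k\ge 2$. The key reduction is: if $s_\alpha$ has an abelian power of period $m$ and exponent at least $k$ starting at position $n$, then its prefix of length $km$ is an abelian $k$-power starting at position $n$, obtained by keeping the first $k$ blocks of the abelian decomposition. Hence it suffices to produce, for each $n$, a period $m$ with $k_{m,n}\ge k$, where $k_{m,n}$ is the maximum exponent given by Corollary~\ref{cor:gab2}; equivalently, to show $\sup_{m}k_{m,n}=\infty$. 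The arithmetic input is the standard fact that, $\alpha$ being irrational, for every $\varepsilon>0$ there are infinitely many $m\ge 1$ with $\{m\alpha\}<\varepsilon$ and infinitely many $m\ge 1$ with $\{-m\alpha\}<\varepsilon$; this follows from the equidistribution of $(\{m\alpha\})_{m\ge1}$, or from the sign-alternating estimate $|q_{j}\alpha-p_{j}|<1/q_{j+1}$ for the convergents $p_j/q_j$ of $\alpha$.

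Put $x=\{\rho+n\alpha\}$ and suppose first that $x\ne 0$, so that $\{-\rho-n\alpha\}=1-x>0$. Using the fact just recalled, choose a period $m$ with $0<\{m\alpha\}<\min\bigl(1/2,\,(1-x)/(k+1)\bigr)$. Then $\{m\alpha\}<1/2$, and the quantity $A=\lfloor\{-\rho-n\alpha\}/\{m\alpha\}\rfloor=\lfloor(1-x)/\{m\alpha\}\rfloor$ satisfies $A\ge k+1$. Since $x\ne 0$, Corollary~\ref{cor:gab2} applies through one of its cases (i), (iii), (iv): in cases (i) and (iii) one has $k_{m,n}=\max(A,B)\ge A\ge k+1$, and in case (iv) one has $k_{m,n}=\max(A-\gamma,B+\gamma-1)\ge A-\gamma\ge A-1\ge k$ since $\gamma\in\{0,1\}$. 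In every case $k_{m,n}\ge k$, as required.

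It remains to handle $x=0$, i.e., $\{\rho+n\alpha\}=0$; this forces case (ii) of Corollary~\ref{cor:gab2}, since $\{-rm\alpha\}\ne 0$ for $r\ge1$. If $0\in I_{\sa{b}}$, choose $m$ with $0<\{m\alpha\}\le 1/k$, so that $k_{m,n}=\lfloor 1/\{m\alpha\}\rfloor\ge k$; if $0\notin I_{\sa{b}}$, choose instead $m$ with $0<\{-m\alpha\}\le 1/k$, so that $k_{m,n}=\lfloor 1/\{-m\alpha\}\rfloor\ge k$. Such $m$ exist by the arithmetic fact, and in both situations $k_{m,n}\ge k$. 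The one genuinely delicate point---and the main obstacle---is the bookkeeping of the exceptional positions $\{\{-rm\alpha\}\colon r\ge 0\}$ from Corollary~\ref{cor:gab2}: when $x$ happens to be such a point one lands in case (iii) or (iv) rather than (i), and one has to check that the bound $k_{m,n}\ge A-1$ still holds there. This is exactly the inequality $\max(A-\gamma,B+\gamma-1)\ge A-1$ used above, which reflects that the coding ambiguity at the endpoints $0$ and $1-\alpha$ costs at most one block; apart from this, the whole statement is a direct consequence of Corollary~\ref{cor:gab2} together with the density of the rotation orbit.
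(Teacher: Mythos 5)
Your proof is correct and takes essentially the same route as the paper's: the paper's own argument just invokes the Kronecker Approximation Theorem to make $\|m\alpha\|$ arbitrarily small and then states that the claim follows from Corollary~\ref{cor:gab2}. You have simply made explicit the case analysis of that corollary (including the exceptional positions $\{\{-rm\alpha\}\colon r\geq 0\}$ and the cost of at most one block there) that the paper leaves to the reader.
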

\begin{proof}
  By the well-known Kronecker Approximation Theorem (see, for instance, \cite[Chap.~XXIII]{Hardy_and_Wright}) we have that the sequence $(\|m\alpha\|)_{m \geq 0}$ is dense in $I$, so that we can make the quantity $\|m\alpha\|$ arbitrarily small. The claim follows
  then from Corollary \ref{cor:gab2}.
\end{proof}

\begin{example}
 For the Fibonacci word, the first values of the sequences $k_{3,n}$ and $k_{10,n}$ are given in Table \ref{tab:kmn}. For $n=0$ we have $k_{3,0}=4$, so the longest abelian power of period $3$ starting at position $0$ has exponent $4$; for $n=1$ we have $k_{3,1}=1$, so there are no proper abelian powers of period $3$ starting at position $1$; for $n=2$ we have $k_{3,2}=5$, so the longest abelian power of period $3$ starting at position $2$ has exponent $5$; etc.
\end{example}

\begin{table}
\centering  
\begin{small}
\begin{raggedright}
\begin{tabular}{c *{30}{@{\hspace{3.1mm}}c}}
$n$\hspace{2mm} &0 & 1 & 2 & 3 & 4 & 5 & 6 & 7 & 8 & 9 & 10 & 11 & 12 & 13 & 14 & 15 & 16 & 17 & 18 & 19 & 20
\\
\hline \\
$k_{3,n}$\hspace{2mm} &4& 1& 5& 3& 1& 4& 2& 6& 3& 1& 5& 2& 1& 4& 1& 6& 3& 1& 5& 2& 6
\\
\hline \\
$k_{10,n}$\hspace{2mm} &2& 4& 1& 2& 5& 1& 3& 1& 2& 4& 1& 3& 5& 1& 4& 1& 2& 4& 1& 3& 1
\\
\hline \rule[0pt]{0pt}{12pt}
\end{tabular}
\end{raggedright}\caption{\label{tab:kmn} The first values of the maximum exponent $k_{3,n}$ of a (possibly degenerated) abelian power of period $3$  starting at position $n$ and $k_{10,n}$ of a (possibly degenerated) abelian power of period $10$ starting at position $n$ in the Fibonacci word $f=s_{\phi-1,\phi-1}$.}
\end{small}
\end{table}

We now introduce a new notion, the \emph{guaranteed exponent with anticipation $i$}, which will be useful when we study the Fibonacci word in the final section. Recall from the previous section the definition of the ordered $i+1$ subintervals, $L_0(i),L_{1}(i),\ldots,L_{i}(i)$, which form a partition of the torus $I$ and are in one-to-one correspondence with the factors of length $i$.

\begin{definition}
Let $s_{\alpha}$ be a Sturmian word of angle $\alpha$. We define for all $m>0$ and $i$ such that $0\leq i\leq m$ the guaranteed exponent with anticipation $i$, denoted by $k_m^{(i)}$, as the largest $k$ such that for every $n\geq 0$ there exists a position $j$ such that $0\leq j \leq i$ and there is in $s_{\alpha}$ a (possibly degenerated) abelian power of period $m$ and exponent $k$ starting at position $n-j$.
\end{definition}

In other words, $k_m^{(i)}$ is the largest value that is guaranteed to appear in every interval of $i+1$ consecutive positions in the sequence $k_{m,n}$.

\begin{theorem}\label{theor:kmi}
 For every $m>0$ and $i$ such that $0\leq i\leq m$, we have that
 \begin{equation}\label{eq:maxk2}
k_m^{(i)}=\max \left( 1,\left \lfloor \frac{1-l_{i}}{\|m\alpha\|}\right \rfloor \right),
\end{equation}
where $l_{i}=\max_{0\leq k \leq i} |L_{k}(i)| $ is the maximum size of an interval in the Sturmian bijection with the factors of length $i$. 
\end{theorem}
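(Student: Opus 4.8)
The plan is to express $k_m^{(i)}$ directly in terms of the maximum exponents $k_{m,n}$ via the characterization in Corollary \ref{cor:gab2}, and then to reduce the problem to finding, over all starting points $\{\rho+n\alpha\}$, the worst case among any block of $i+1$ consecutive positions. The key observation is that the positions $n-i, n-i+1, \ldots, n$ correspond, by Proposition \ref{prima}, to the $i+1$ points $\{\rho+(n-i)\alpha\}, \ldots, \{\rho+n\alpha\}$, which are consecutive iterates of the rotation and hence land in the $i+1$ intervals $L_0(i), \ldots, L_i(i)$ in some order (actually, exactly one point in each interval). So guaranteeing an abelian power of period $m$ and exponent $k$ somewhere in a window of $i+1$ consecutive positions is the same as guaranteeing that at least one of these $i+1$ points is ``good enough'' to start such a power. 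Since for each window the $i+1$ points hit every interval $L_k(i)$ exactly once, the guaranteed exponent is governed by the point which is forced to lie in the \emph{largest} such interval — because that point may be as far as $|L_{k}(i)|$ from the ``safe'' endpoint of its interval, and by Lemma \ref{lem:ordered} / Theorem \ref{the:maingab} the exponent of an abelian power starting there is essentially $\lfloor (\text{distance to the bad endpoint})/\|m\alpha\| \rfloor$.

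Concretely, first I would fix a window of $i+1$ consecutive positions ending at $n$ and assume $\{m\alpha\}<1/2$ (the case $\{m\alpha\}>1/2$ is symmetric, swapping the roles of the intervals $I(0,\{-m\alpha\})$ and $I(\{-m\alpha\},1)$, i.e.\ replacing $A$ by $B$ throughout). By Corollary \ref{cor:gab2}, for a position $n'$ with $\{\rho+n'\alpha\}\in I(0,\{-m\alpha\})$ the exponent $k_{m,n'}$ is $\lfloor \{-\rho-n'\alpha\}/\{m\alpha\}\rfloor = \lfloor (1-\{\rho+n'\alpha\})/\{m\alpha\}\rfloor$ (up to the negligible $\pm\gamma$ corrections at the exceptional points, which I would note do not affect the supremum/infimum over $n$ since they are isolated). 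To guarantee exponent $k$ in the window, we need \emph{some} $n'$ in the window with $\{\rho+n'\alpha\}$ small enough, namely $\{\rho+n'\alpha\}<1-k\{m\alpha\} = 1-k\|m\alpha\|$. The $i+1$ points of the window are distributed one per interval $L_0(i),\ldots,L_i(i)$; the leftmost of them lies in $L_0(i)=I(0,|L_0(i)|)$, so if $1-k\|m\alpha\| > |L_0(i)|$ we are safe for that window. But the adversary chooses $n$ (equivalently $\rho$), so the worst window is the one where the leftmost point is forced as far right as possible: that happens precisely when the point that falls nearest the origin still lies in a translate that pushes it up to the size of the \emph{largest} interval $l_i = \max_k |L_k(i)|$. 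A short argument shows the binding constraint is $1 - l_i$: we can always find $n$ so that within the window the smallest value of $\{\rho+n'\alpha\}$ is (arbitrarily close to, or exactly) $1-l_i$, and we can never be forced above it. Hence $k_m^{(i)}$ is the largest $k$ with $1-l_i > k\|m\alpha\|$, truncated below at $1$ since a degenerated abelian power (exponent $1$) always exists — giving exactly formula \eqref{eq:maxk2}.

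The main obstacle I anticipate is the careful bookkeeping at the exceptional points $\{\{-rm\alpha\}\colon r\ge 0\}$ and the interval-endpoint conventions ($0\in I_{\sa b}$ vs.\ $0\notin I_{\sa b}$), which enter through the $\gamma$ and the $r$-thresholds in Theorem \ref{the:maingab} and Corollary \ref{cor:gab2}. These finitely-many ``off by one'' deviations in the $k_{m,n}$ sequence could in principle lower the guaranteed exponent in one special window. I would handle this by showing that the floor function absorbs them: since $l_i=|L_k(i)|$ for some $k$ and the intervals $L_k(i)$ have endpoints among $\{0,1,\{-\alpha\},\ldots,\{-i\alpha\}\}$, one checks that the extremal window realizing the minimum over $n$ can always be chosen so that its leftmost point is an interior point of $I(0,\{-m\alpha\})$ unless $1-l_i$ is itself an integer multiple of $\{m\alpha\}$, in which case the $+1/-1$ corrections cancel against the strict-versus-nonstrict inequality and leave $\lfloor (1-l_i)/\|m\alpha\|\rfloor$ unchanged. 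The remaining steps — the symmetric case $\{m\alpha\}>1/2$ and the floor at $1$ for the degenerate case — are then routine.
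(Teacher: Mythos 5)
Your overall strategy---reducing $k_m^{(i)}$ to the worst-case minimum of the orbit points over a window of $i+1$ consecutive positions, and showing that this worst case is governed by $l_i$---is the same as the paper's, and you land on the correct formula. However, the combinatorial step you rely on to control that minimum is false. You claim that the $i+1$ points $\{\rho+(n-i)\alpha\},\ldots,\{\rho+n\alpha\}$ land in the $i+1$ intervals $L_0(i),\ldots,L_i(i)$ with exactly one point in each, and in particular that one of them lies in $L_0(i)$. Already for $i=1$ in the Fibonacci word this fails: the two intervals are $L_0(1)=I_{\sa{b}}=[0,2-\phi)$ and $L_1(1)=I_{\sa{a}}=[2-\phi,1)$, and any occurrence of the factor $\sa{aa}$ gives two consecutive orbit points both lying in $I_{\sa{a}}$ (e.g.\ $y=0.8$ and $\{y+\alpha\}\approx 0.418$), so $L_0(1)$ receives no point of that window. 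The number of window points falling in $L_k(i)$ is the number of occurrences of the corresponding length-$i$ factor in the window, which need not be $1$. The correct mechanism, which is what the paper uses, is different: if $y$ is the \emph{first} point of the window and $y\in L_k(i)$, then the left endpoint of $L_k(i)$ is $\{-j\alpha\}$ for some $j$ with $0\leq j\leq i$, so the window point $\{y+j\alpha\}=y-\{-j\alpha\}$ is smaller than $|L_k(i)|\leq l_i$; sharpness then follows from Kronecker's theorem by placing $y$ near the right end of a largest interval. Without an argument of this kind, your ``short argument'' for the binding constraint is not established.

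There is also a sign slip that makes the middle of your argument internally inconsistent. The quantity that stays below $l_i$ but can be forced arbitrarily close to it is $\min_{n'}\{\rho+n'\alpha\}$ over the window (in the example above it reaches values up to $\phi-1=l_1$, well above $1-l_1$), whereas $1-l_i$ is the guaranteed value of $\{-\rho-n'\alpha\}$ at the best position $n'$. As written, combining ``the smallest value of $\{\rho+n'\alpha\}$ in the worst window is $1-l_i$'' with the requirement $\{\rho+n'\alpha\}<1-k\{m\alpha\}$ would yield $k\|m\alpha\|<l_i$, i.e.\ $k_m^{(i)}=\lfloor l_i/\|m\alpha\|\rfloor$ rather than $\lfloor(1-l_i)/\|m\alpha\|\rfloor$. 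Your remarks on the exceptional points, the symmetric case $\{m\alpha\}>1/2$, and the truncation at $1$ (which for $i=0$, where $l_0=1$, is the whole content and needs the density argument directly) are fine in spirit, but the core counting step must be replaced as above.
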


\begin{proof}
Let $m > 0$ be fixed, and let us first consider the case $i = 0$. Since $l_0 = 1$, we need to prove that
$k_m^{(0)} = 1$. It is equivalent to say that in a Sturmian word $s_{\alpha,\rho}$ there always exists a position $n$
such that no proper abelian power of period $m$ starts at this position. This is clear: if $\{m\alpha\} < 1/2$, then by
Proposition \ref{pro:int} and Lemma \ref{lem:ordered} we need to find a point $\{\rho+n\alpha\}$ such that
$\{\rho+n\alpha\} > \{-m\alpha\}$, while if $\{m\alpha\} > 1/2$, we need to have $\{\rho+n\alpha\} < \{-m\alpha\}$. By
the Kronecker Approximation Theorem such a point can always be found.

Let us now consider the general case $i>0$.
We know from Proposition \ref{pro:int} that  the factor $a_{n}\cdots a_{n+m-1}\cdots a_{n+km-1}$ of length $km$ of $s_{\alpha,\rho}$ is an abelian power of period $m$ and exponent $k$ starting at position $n$ if and only if  the $k$ points $\{\rho+(n+tm)\alpha\}$, $0\leq t \leq k-1$, are either all in the interval $I_1=I(0,\{-m\alpha\})$ or all in the interval $I_2=I(\{-m\alpha\},1)$.

Let us suppose $\{m\alpha\}<1/2$; the case $\{m\alpha\}>1/2$ is analogous. The longest abelian power of period $m$ starting at position $n$ is a factor that depends on the point $\{\rho+n\alpha\}$.  Since we want the largest abelian power of period $m$ with anticipation $i$, we have to consider all the  points $\{\rho+(n-j)\alpha\}$  with $0\leq j \leq i$. Let $k$ be such that $\{\rho+n\alpha\}\in L_k(i)$.
Since the size of the interval $L_k(i)$ is at most $l_i$, by the definition of the intervals we can say that there exists a $j$, with $0\leq j \leq i$, such that $\{\rho+(n-j)\alpha\}<l_i$, but by the Kronecker Approximation Theorem, this point can be arbitrarily close to $l_i$.
Hence, the largest integer $k$ such that for any $n\geq 0$ there exists a $j\leq i$ such that $\{\rho+(n-j+tm)\alpha\} \leq \{-m\alpha\}$ for every $0\leq t\leq k-1$, is either $1$ (in the case when no proper abelian power is guaranteed to start in any of $i+1$ consecutive positions) or the largest $k$ such that $(k-1)\|m\alpha\| \leq |I_1|-l_i=1-\|m\alpha\|-l_i$, i.e.,
\begin{equation*}
k=\left \lfloor \frac{1-l_{i}}{\|m\alpha\|}\right \rfloor. 
\end{equation*}
Consequently, Proposition \ref{pro:int} implies that
\begin{equation*}
  k_m^{(i)} = \max \left( 1, \left \lfloor \frac{1-l_{i}}{\|m\alpha\|}\right \rfloor \right). 
\end{equation*}
in this case.

Indeed, the case $\{m\alpha\} > 1/2$ is analogous. We can find $j$, with $0 \leq j \leq i$, such that
$\{\rho+(n-j)\alpha\} > 1-l_i$. Again such a point can be arbitrarily close to the point $1-l_i$. Thus, we need to find
the largest integer $k$ such that $1-l_i-(k-1)\|m\alpha\| \geq \|m\alpha\|$. The conclusion follows.
\end{proof}

\begin{example}
Take the Fibonacci word $f=s_{\phi-1,\phi-1}$, $m=10$ and $i=6$. We have $\|m\alpha\|\approx 0.180$ and $1-l_6\approx 0.764$, so from (\ref{eq:maxk2}) we get $k_m^{(i)}=4$. Indeed, using Corollary \ref{cor:gab2} we can compute the first values of the sequence $k_{10,n}$ (see Table \ref{tab:kmn}) and check that---at least in the first positions---there is a value $4$ in any interval of $i+1=7$ consecutive positions, but there are intervals of size $6$ in which no value $5$ is present, so that the exponent guaranteed for any $n$, with anticipation $6$, is equal to $4$.

The  values of  $k_{10}^{(i)}$ relative to $\alpha=\phi-1$ are reported in Table \ref{tab:kmi}.
\end{example}

\begin{table}
\centering  
\begin{small}
\begin{raggedright}
\begin{tabular}{c *{30}{@{\hspace{3.1mm}}c}}
$i$\hspace{2mm} &0 & 1 & 2 & 3 & 4 & 5 & 6 & 7 & 8 & 9 
\\
\hline \\
$k_{10}^{(i)}$\hspace{2mm} &1& 2& 3& 3& 4& 4& 4& 4& 4& 4
\\
\hline \rule[0pt]{0pt}{12pt}
\end{tabular}
\end{raggedright}\caption{\label{tab:kmi} The values of the guaranteed exponent $k_{10}^{(i)}$ for $0\leq i\leq 10$ in the Fibonacci word $f=s_{\phi-1,\phi-1}$.}
\end{small}
\end{table}

\section{Approximating Irrationals by Rationals and Abelian Repetitions}\label{sec:approx}

The results of the previous sections allow us to deal with abelian powers and abelian repetitions in a Sturmian word $s_{\alpha}$ by using classical results on the rational approximations of the irrational $\alpha$. 
Indeed, for any rational approximation $n/m$ of $\alpha$ such that $|n/m-\alpha|<1/km$, we have $|n-m\alpha|<1/k$, so $\|m\alpha\|<1/k$. Hence, by Theorem \ref{the:main1}, the Sturmian word $s_{\alpha}$ of angle $\alpha$ contains an abelian power of period $m$ and exponent $k$. Using this observation, we can translate classical results on the rational approximations of $\alpha$ into analogous properties on the abelian powers occurring in $s_{\alpha}$. 

In what follows, we recall some classical results from elementary number theory. For any notation not explicitly defined in this section we refer the reader to \cite{Hardy_and_Wright}.
Recall that every irrational number $\alpha$ can be uniquely written as a (simple) continued fraction as follows:
\begin{equation}\label{cf}
  \alpha = a_0 + \dfrac{1}{a_1 + \dfrac{1}{a_2 + \ldots}}
\end{equation}
where $a_{0}=\lfloor \alpha \rfloor$, and the infinite sequence $(a_{i})_{i\geq 0}$ is called the sequence of partial quotients of $\alpha$. The continued fraction expansion of $\alpha$ is usually denoted by its sequence of partial quotients as follows: $\alpha=[a_{0};a_{1},a_{2},\ldots ]$, and each its finite truncation $[a_{0};a_{1},a_{2},\ldots,a_{k}]$ is a rational number $n_{k}/m_{k}$ called the $k$th convergent to $\alpha$. We say that an irrational $\alpha=[a_{0};a_{1},a_{2},\ldots ]$ has bounded partial quotients if and only if the sequence $(a_{i})_{i\geq 0}$ is bounded.  

Since the Golden Ratio $\phi$ is defined by the equation $\phi=1+1/\phi$, we have from Equation \ref{cf} that $\phi=[1;\overline{1}]$ (we indicate a repeating period with a bar over the period) and therefore $\phi-1=[0;\overline{1}]$. The sequence $F_0=1, F_1=1, F_{j+1}=F_j+F_{j-1}$ for $j\geq 1$  is the well known sequence of Fibonacci numbers. The sequences of fractions $\left(F_{j+1}/F_j\right)_{j\geq 0}$ and $0,\left(F_{j}/F_{j+1}\right)_{j\geq 0}$ are the sequences of convergents to $\phi$ and $\phi-1$, respectively. 

The following classical result (see for example \cite[Theorem 6]{Lang}) states that the best rational  approximations of an irrational $\alpha$ are given by its convergents.

\begin{theorem}\label{theor:lang}
 For every irrational $\alpha$, if $n_{i}/m_{i}$ and $n_{i+1}/m_{i+1}$ are consecutive  convergents to $\alpha$, then $m_{i+1}$ is the smallest integer $m>m_{i}$ such that $\|m\alpha\|<\|m_{i}\alpha\|$.
\end{theorem}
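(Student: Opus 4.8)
The plan is to prove Theorem~\ref{theor:lang} via the classical theory of best approximations, in the following steps. First I would recall the fundamental recurrences and identities for convergents: if $n_i/m_i = [a_0;a_1,\ldots,a_i]$, then $m_{i+1} = a_{i+1}m_i + m_{i-1}$ and $n_{i+1} = a_{i+1}n_i + n_{i-1}$, together with the determinant identity $n_{i+1}m_i - n_i m_{i+1} = (-1)^i$. From these one gets the standard fact that $\|m_i\alpha\| = |m_i\alpha - n_i|$ and that the sequence $(|m_i\alpha - n_i|)_{i\geq 0}$ is strictly decreasing, with $|m_i\alpha - n_i| = |m_{i+1}\alpha - n_{i+1}| \cdot \theta_{i+1} + \cdots$; more precisely, writing $D_i = |m_i\alpha - n_i|$, one has $D_{i-1} = a_{i+1} D_i + D_{i+1}$, so in particular $D_{i-1} > D_i$. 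I would also note the crucial intermediate inequality: for every $m$ with $0 < m < m_{i+1}$ and every integer $n$, one has $|m\alpha - n| \geq |m_i\alpha - n_i| = D_i$, with equality forcing $m = m_i$ (and $n = n_i$) when $i \geq 1$; equivalently, $\|m\alpha\| > D_i = \|m_i\alpha\|$ is impossible for such $m$ unless $m = m_i$.

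Second, I would assemble these facts to get the statement. On one hand, $m_{i+1}$ itself satisfies $\|m_{i+1}\alpha\| = D_{i+1} < D_i = \|m_i\alpha\|$, so $m_{i+1}$ is a value of $m > m_i$ achieving $\|m\alpha\| < \|m_i\alpha\|$. On the other hand, suppose $m_i < m < m_{i+1}$. The best-approximation lemma above, applied with the convergent index $i$, says $|m\alpha - n| \geq D_i$ for all integers $n$, hence $\|m\alpha\| \geq D_i = \|m_i\alpha\|$; and equality would force $m = m_i$, contradicting $m > m_i$. Therefore $\|m\alpha\| > \|m_i\alpha\|$ for every $m$ strictly between $m_i$ and $m_{i+1}$, which is exactly the claim that $m_{i+1}$ is the \emph{smallest} such integer.

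The main obstacle is the best-approximation lemma ($|m\alpha-n|\geq D_i$ for $0<m<m_{i+1}$), which is the genuine content here; everything else is bookkeeping with the recurrences. The cleanest route is the classical lattice/linear-algebra argument: given $0 < m < m_{i+1}$ and an integer $n$, solve the system $m = x\,m_i + y\,m_{i+1}$, $n = x\,n_i + y\,n_{i+1}$ for integers $x,y$ (possible by the determinant identity $\pm 1$), observe that $x$ and $y$ must have opposite signs (or one vanishes) because $0 < m < m_{i+1}$ forces it, while $m_i\alpha - n_i$ and $m_{i+1}\alpha - n_{i+1}$ also have opposite signs; hence $|m\alpha - n| = |x(m_i\alpha-n_i) + y(m_{i+1}\alpha-n_{i+1})| = |x|\,D_i + |y|\,D_{i+1} \geq D_i$, with equality only if $|x|=1,y=0$. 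Since this is entirely standard, the simplest write-up is to cite it: the statement is \cite[Theorem 6]{Lang} essentially verbatim, or it follows from the best-approximation properties of continued fractions in \cite[Chap.~X]{Hardy_and_Wright}, so the proof can reasonably be reduced to one or two lines invoking those references together with the monotonicity $D_{i+1} < D_i$.
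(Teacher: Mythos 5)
Your argument is correct, but there is nothing in the paper to compare it against: the authors state this as a classical result and simply cite \cite[Theorem 6]{Lang}, giving no proof of their own. What you outline is precisely the standard best-approximation proof one finds in \cite{Lang} or \cite[Chap.~X]{Hardy_and_Wright}: the recurrence $D_{i-1}=a_{i+1}D_i+D_{i+1}$ gives strict monotonicity of $D_i=|m_i\alpha-n_i|$, and the lattice argument (solve $(m,n)=x(m_i,n_i)+y(m_{i+1},n_{i+1})$ using the determinant identity, observe $x$ and $y$ must have opposite signs while $m_i\alpha-n_i$ and $m_{i+1}\alpha-n_{i+1}$ also alternate in sign, so the two contributions add in absolute value) yields $|m\alpha-n|\geq D_i$ for $0<m<m_{i+1}$ with equality only at $m=m_i$. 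The assembly into the stated theorem is then immediate. One standard caveat worth a sentence in a careful write-up: the identification $\|m_i\alpha\|=D_i$ uses $D_i<1/2$, which holds for $i\geq 1$ but can fail for $i=0$ when $a_1=1$ (in which case $m_1=m_0=1$ and the statement for $i=0$ degenerates anyway); this does not affect the paper's applications, since Corollary \ref{cor:cor} explicitly assumes $m_i>1$. Your closing remark that the cleanest option is to cite \cite{Lang} or \cite{Hardy_and_Wright} is exactly what the authors chose to do.
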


From Theorem \ref{theor:lang} we directly have the following corollary.
 
\begin{corollary}\label{cor:cor}
Suppose that  $m_i>1$ and $m_{i+1}$ are consecutive denominators of convergents to $\alpha$. Then $\|m_i\alpha\|=\min_{1\leq m< m_{i+1}}\|m\alpha\|$.
\end{corollary}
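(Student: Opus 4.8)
The plan is to derive Corollary \ref{cor:cor} directly from Theorem \ref{theor:lang}. First I would recall that the sequence of convergent denominators $1 = m_0 \leq m_1 < m_2 < \cdots$ is strictly increasing past the first terms, and that Theorem \ref{theor:lang} tells us precisely that $m_{i+1}$ is the \emph{smallest} integer $m > m_i$ for which $\|m\alpha\| < \|m_i\alpha\|$. The contrapositive reading of this is exactly what we need: for every integer $m$ with $m_i < m < m_{i+1}$ we must have $\|m\alpha\| \geq \|m_i\alpha\|$, since otherwise $m$ would be a smaller witness than $m_{i+1}$, contradicting minimality.

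Next I would handle the range $1 \leq m \leq m_i$. For $m = m_i$ there is nothing to prove. For $1 \leq m < m_i$ we want $\|m\alpha\| \geq \|m_i\alpha\|$ as well; this follows by applying Theorem \ref{theor:lang} (or rather the standard fact about convergents it encodes) along the chain of earlier convergents: if $m_{j}$ and $m_{j+1}$ are consecutive convergent denominators with $j < i$, then $\|m_{j}\alpha\| > \|m_{j+1}\alpha\|$, and moreover no $m$ in the range $m_{j} \leq m < m_{j+1}$ beats $\|m_j\alpha\|$. Telescoping these inequalities, any $m < m_i$ lies in some interval $[m_j, m_{j+1})$ with $j \leq i-1$, so $\|m\alpha\| \geq \|m_j\alpha\| \geq \|m_i\alpha\|$, where the last inequality uses that the quantities $\|m_j\alpha\|$ are strictly decreasing in $j$. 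Here the hypothesis $m_i > 1$ is used only to ensure $m_i$ is genuinely a convergent denominator distinct from the trivial one, so that Theorem \ref{theor:lang} applies to the pair $(m_i, m_{i+1})$ in the stated form.

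Combining the two ranges, we conclude $\|m_i\alpha\| \leq \|m\alpha\|$ for all $m$ with $1 \leq m < m_{i+1}$, and since equality is attained at $m = m_i$, we get $\|m_i\alpha\| = \min_{1 \leq m < m_{i+1}} \|m\alpha\|$, as claimed. The main obstacle — really the only subtlety — is the bookkeeping for the range $m < m_i$: one must invoke the monotonicity $\|m_0\alpha\| > \|m_1\alpha\| > \|m_2\alpha\| > \cdots$ of the distances along convergents, which is itself a consequence of Theorem \ref{theor:lang} applied successively, rather than something given verbatim. If one prefers a cleaner exposition, the whole statement can be proved in one stroke: by Theorem \ref{theor:lang}, among all $m \geq 1$ the value $\|m\alpha\|$ first drops below $\|m_i\alpha\|$ exactly at $m = m_{i+1}$, so no $m < m_{i+1}$ achieves a strictly smaller distance, and $m = m_i$ achieves it, giving the minimum.
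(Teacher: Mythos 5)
Your proof is correct and follows the same route as the paper, which simply states that the corollary is a direct consequence of Theorem \ref{theor:lang}; you have merely spelled out the bookkeeping (the minimality argument for $m_i < m < m_{i+1}$ and the telescoping monotonicity $\|m_0\alpha\| > \|m_1\alpha\| > \cdots$ for $m < m_i$) that the paper leaves implicit.
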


Corollary \ref{cor:cor} has several consequences on the structure of abelian powers and abelian repetitions in Sturmian words.

The first one is given in the following proposition.

\begin{proposition}
 Let $s_{\alpha}$ be a Sturmian word of angle $\alpha$. For every integer $m>1$, let $k_{m}$ (resp. $k'_{m}$) be the maximum exponent of an abelian power (resp.~abelian repetition) of period $m$ in $s_{\alpha}$. Then the subsequence $(k_{m_{i}})_{i\geq 0}$ (resp.~$(k'_{m_{i}})_{i\geq 0}$), where the numbers $m_{i}$ are the denominators of the convergents to $\alpha$, is strictly increasing.
 \end{proposition}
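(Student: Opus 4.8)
The plan is to derive both statements from Theorem~\ref{the:main1}, which tells us that $k_m = \lfloor 1/\|m\alpha\|\rfloor$, together with Corollary~\ref{cor:cor}, which controls the minimum of $\|m\alpha\|$ over ranges bounded by consecutive convergent denominators. First I would handle the abelian power case. Fix consecutive convergent denominators $m_i > 1$ and $m_{i+1}$. By Theorem~\ref{theor:lang} we have $\|m_{i+1}\alpha\| < \|m_i\alpha\|$, and moreover by Corollary~\ref{cor:cor} the value $\|m_i\alpha\|$ is the strict minimum of $\|m\alpha\|$ over $1 \leq m < m_{i+1}$; in particular $\|m_{i+1}\alpha\| < \|m_i\alpha\|$ strictly. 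Since $x \mapsto \lfloor 1/x \rfloor$ is non-increasing, this already gives $k_{m_{i+1}} \geq k_{m_i}$. To upgrade to strict inequality, I would use the well-known identity (see \cite[Chap.~X]{Hardy_and_Wright}) that $\|m_i\alpha\| = |m_i\alpha - n_i| = (-1)^i(m_i\alpha - n_i)$ satisfies $\|m_i\alpha\| + \|m_{i+1}\alpha\| \cdot a_{i+2} \leq$ something — more simply, the sharper classical bound that consecutive convergents satisfy $\|m_i\alpha\| > \|m_{i+1}\alpha\| \geq \|m_i\alpha\|/(a_{i+2}+2) $ is not quite what I want; instead the cleanest route is the identity $m_{i+1}\|m_i\alpha\| + m_i\|m_{i+1}\alpha\| = 1$ (valid because $|m_i n_{i+1} - m_{i+1} n_i| = 1$ and the two errors have opposite signs). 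From this, $\|m_i\alpha\| < 1/m_{i+1}$ and $\|m_{i+1}\alpha\| < 1/m_{i+2} \leq 1/(m_{i+1}+m_i)$, which combined with $\|m_i\alpha\| = (1 - m_i\|m_{i+1}\alpha\|)/m_{i+1} > (1 - m_i/(m_{i+1}+m_i))/m_{i+1} = 1/(m_{i+1}+m_i)$, already separates $\|m_{i+1}\alpha\|$ and $\|m_i\alpha\|$ by enough of a gap. Actually the slick finish: $1/\|m_{i+1}\alpha\| - 1/\|m_i\alpha\| = (\|m_i\alpha\| - \|m_{i+1}\alpha\|)/(\|m_i\alpha\|\|m_{i+1}\alpha\|)$, and I would show this difference exceeds $1$, forcing $\lfloor 1/\|m_{i+1}\alpha\|\rfloor > \lfloor 1/\|m_i\alpha\|\rfloor$.

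For the abelian repetition case, I would first argue that $k'_m \geq k_m$ always (an abelian power is in particular an abelian repetition once we allow it to be extended, or more carefully: the maximum abelian repetition of period $m$ is at least as long as the maximum abelian power of period $m$, since a repetition only requires the head and tail to have Parikh vectors contained in $\mathcal{P}$, which is a weaker constraint). So $k'_{m_i} \geq k_{m_i}$, and the strict monotonicity of $(k_{m_i})$ will not by itself give strict monotonicity of $(k'_{m_i})$. The fix is to prove a matching upper bound $k'_m < k_m + 1$, i.e., $k'_m \leq k_m$ essentially (or at least $k'_m - k_m$ is bounded and the gaps in $k_{m_i}$ grow): concretely, an abelian repetition of period $m$ and exponent $k$ contains an abelian power of period $m$ and exponent $k - 2$ (delete the head and tail, each of length less than $m$), hence $k'_m - 2 \leq k_m$, so $k'_m \leq k_m + 2$. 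Then strict monotonicity follows provided the gaps $k_{m_{i+1}} - k_{m_i}$ eventually exceed $2$ — but that need not hold for small indices (e.g. when partial quotients are $1$), so the cleaner approach is to sharpen the bound: I would show directly that $k'_{m_i}$ equals (or is within $1$ of) $\lfloor 1/\|m_i\alpha\|\rfloor + \lfloor\text{something}\rfloor$ using Theorem~\ref{the:maingab}-type reasoning for where the longest repetition sits, and then invoke the strict inequality $\|m_{i+1}\alpha\| < \|m_i\alpha\|$ exactly as before.

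The main obstacle I anticipate is the second (repetition) case: pinning down $k'_m$ precisely enough that a strict drop in $\|m\alpha\|$ forces a strict increase in $k'_m$. The length of the longest abelian repetition of period $m$ involves not just $\|m\alpha\|$ but also how much ``room'' the head and tail can occupy on either side of the relevant interval $I(0,\{-m\alpha\})$ or $I(\{-m\alpha\},1)$; a clean formula would come from combining Proposition~\ref{pro:int} with an analysis of the gaps $\{-\alpha\},\{-2\alpha\},\ldots$ immediately outside this interval. I expect one can show $k'_m = k_m + O(1)$ with the $O(1)$ term controlled so that $k'_{m_{i+1}} > k'_{m_i}$ follows from $k_{m_{i+1}} \geq k_{m_i} + c$ for the right constant $c$; alternatively, a purely monotonicity-based argument works if one first establishes that the quantity defining $k'_m$ is itself a non-increasing function of $\|m\alpha\|$ alone — I would check whether the authors' later development of abelian repetitions (Section~\ref{sec:approx}, continued) furnishes such a formula and cite it. Either way, the arithmetic heart is the single fact $\|m_{i+1}\alpha\| < \|m_i\alpha\|$ from Theorem~\ref{theor:lang}, upgraded to a quantitative gap via the identity $m_{i+1}\|m_i\alpha\| + m_i\|m_{i+1}\alpha\| = 1$.
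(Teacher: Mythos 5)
For the abelian power case your skeleton is sound, and in fact more explicit than the paper's one-line proof (which simply invokes Theorem~\ref{the:main1} and Corollary~\ref{cor:cor}): the identity $m_{i+1}\|m_i\alpha\| + m_i\|m_{i+1}\alpha\| = 1$ is correct, and the two bounds you extract from it, namely $\|m_i\alpha\| > 1/(m_i+m_{i+1})$ and $\|m_{i+1}\alpha\| < 1/m_{i+2} \le 1/(m_i+m_{i+1})$, already finish the argument, because the \emph{integer} $m_i+m_{i+1}$ then separates the two reciprocals: $k_{m_i} = \lfloor 1/\|m_i\alpha\|\rfloor \le m_i+m_{i+1}-1 < m_{i+2} \le \lfloor 1/\|m_{i+1}\alpha\|\rfloor = k_{m_{i+1}}$. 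You should commit to that finish and drop the proposed ``slick finish,'' because the claim that $1/\|m_{i+1}\alpha\| - 1/\|m_i\alpha\| > 1$ is false in general: one computes $1/\|m_i\alpha\| = \alpha_{i+1}m_i + m_{i-1}$ with $\alpha_{i+1}=[a_{i+1};a_{i+2},\ldots]$, so the difference equals $(\alpha_{i+2}-1)\left(m_{i+1}+m_i/\alpha_{i+2}\right)$, which is less than $1$ whenever $a_{i+2}=1$ and $a_{i+3} > m_{i+2}$ (e.g.\ $\alpha=[0;1,1,1,1,10,\ldots]$ at $i=2$ gives reciprocals $\approx 4.83$ and $\approx 5.28$). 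The floors still jump there only because an integer sits between the two values, which is exactly what your sandwich bounds establish.

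The abelian repetition case is where the genuine gap lies, and you have diagnosed it correctly but not closed it. As you note, $k_m \le k'_m < k_m+2$ together with $k_{m_{i+1}} > k_{m_i}$ does not suffice, since the gap $k_{m_{i+1}}-k_{m_i}$ really can equal $1$ (the example above has $k_{m_2}=4$, $k_{m_3}=5$), and then $k'_{m_{i+1}} \ge k_{m_{i+1}} = k_{m_i}+1$ does not beat $k'_{m_i} < k_{m_i}+2$. The missing ingredient is the exact value of $k'_{m_i}$ at convergent denominators: the paper shows (Proposition~\ref{prop:unique} and Proposition~\ref{prop:mec}, which appear \emph{after} this proposition) that when $m_i$ is a convergent denominator, a maximal abelian power of period $m_i$ always extends to an abelian repetition with head and tail of the maximal length $m_i-1$, whence $k'_{m_i} = k_{m_i}+2-2/m_i$. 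With that formula, strict monotonicity of $(k'_{m_i})$ follows from $k_{m_{i+1}} \ge k_{m_i}$ together with $2-2/m_{i+1} > 2-2/m_i$. Your proposal gestures at exactly this (``how much room the head and tail can occupy'') and at citing a later formula, but without proving the maximal-head-and-tail extension the second half of the statement remains open.
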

 
\begin{proof}
The proposition is an immediate consequence of Theorem \ref{the:main1} and Corollary \ref{cor:cor}. 
\end{proof}

\begin{example}
In Table \ref{tab:fici3}, we give the first values of the sequence $\|m(\phi-1)\|$. Notice that the local minima correspond to the Fibonacci numbers (in bold), which are the denominators of the convergents to $\phi-1$.

From Corollary \ref{cor:cor} and Theorem \ref{the:main1}, we have that the local maxima of the sequence $k_{m}$ are precisely the values of $m$ that are Fibonacci numbers (see Table \ref{tab:fici2}).
 \end{example}
 
From Proposition \ref{pro:main} and Corollary \ref{cor:cor} we deduce the following.

\begin{proposition}\label{prop:unique}
 Let $m$ be a denominator of a convergent to $\alpha$. If $\{-m\alpha\}\geq \{-\alpha\}$ (resp.~if $\{-m\alpha\}< \{-\alpha\}$), then there is only one heavy (resp.~light) factor of length $m$, which starts and ends with the letter $\sa{a}$ (resp.~with the letter $\sa{b}$).
\end{proposition}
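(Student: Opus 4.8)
The plan is to combine the counting information from Proposition~\ref{pro:main} with the extremal property of convergents from Corollary~\ref{cor:cor}. Recall that a factor of length $m$ of $s_\alpha$ is heavy if it lies above $\{-m\alpha\}$ in the Sturmian bijection (i.e. its associated interval $L_i(m)$ is contained in $I(\{-m\alpha\},1)$) and light if it lies below. The number of heavy factors of length $m$ is therefore the number of partition points $0,\{-\alpha\},\ldots,\{-(m-1)\alpha\}$ that are $\geq \{-m\alpha\}$, equivalently the number of indices $r$ with $0\le r\le m-1$ such that $\{-r\alpha\}\ge\{-m\alpha\}$; dually, the number of light factors is the number of such $r$ with $\{-r\alpha\}\le\{-m\alpha\}$. (One must be mildly careful with the endpoints $0$ and $1$, which correspond to the two unit-length factors $\sa{b}^{?}$ contexts; since $m$ is a convergent denominator, $m\ge 1$ and the point $\{-0\cdot\alpha\}=0$ is one of the partition points.)

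First I would treat the case $\{-m\alpha\}\ge\{-\alpha\}$, so that $\|m\alpha\|=\{-m\alpha\}$ when $\{m\alpha\}>1/2$, but more usefully I would simply use $\|m\alpha\|=\min(\{-m\alpha\},\{m\alpha\})$ throughout. Suppose, for contradiction, that there are two distinct heavy factors of length $m$; then there exists an index $r$ with $1\le r\le m-1$ such that $\{-r\alpha\}\ge\{-m\alpha\}$ and, in fact, the partition point $\{-r\alpha\}$ lies strictly between $\{-m\alpha\}$ and $1$ (strictly, because distinct factors correspond to nondegenerate subintervals). Equivalently $\{-r\alpha\}>\{-m\alpha\}$, i.e. $\{r\alpha\}<\{m\alpha\}$, which forces $\|r\alpha\|\le\{r\alpha\}<\{m\alpha\}$. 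But when $\{-m\alpha\}\ge\{-\alpha\}$ we have $\{m\alpha\}\le\{\alpha\}\le 1/2$ for the relevant convergents — more directly, $\{m\alpha\}\le 1/2$ in this branch, hence $\|m\alpha\|=\{m\alpha\}$ — and so $\|r\alpha\|<\|m\alpha\|$ with $1\le r< m$, contradicting Corollary~\ref{cor:cor} (which says $\|m\alpha\|=\min_{1\le r<m}\|r\alpha\|$, using also that $m$ here plays the role of the later convergent denominator $m_{i+1}$, so strictly $m>m_i\ge 1$). Therefore there is exactly one heavy factor. The dual argument, swapping the roles of $\{r\alpha\}$ and $\{-r\alpha\}$ and using $\{-m\alpha\}<\{-\alpha\}$, gives exactly one light factor in that branch. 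Finally, the claim that this unique heavy (resp.\ light) factor starts and ends with $\sa{a}$ (resp.\ $\sa{b}$) is exactly the second assertion of Proposition~\ref{pro:main}, which applies precisely under the hypothesis $\{-m\alpha\}\ge\{-\alpha\}$ (resp.\ $\{-m\alpha\}\le\{-\alpha\}$).

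The main obstacle I anticipate is bookkeeping at the two special points $0$ and $1$ in the partition, together with making sure the inequality $\{m\alpha\}\le 1/2$ (equivalently $\|m\alpha\|=\{m\alpha\}$) really does hold in the branch $\{-m\alpha\}\ge\{-\alpha\}$ when $m$ is a convergent denominator. For the latter: if $m=m_{i+1}$ is a convergent denominator with $m>1$, then $\|m\alpha\|\le\|m_i\alpha\|\le\|\alpha\|=\min(\{\alpha\},\{-\alpha\})\le 1/2$, and the sign of $\{m\alpha\}-\{-m\alpha\}$ is governed by whether $n_{i+1}/m_{i+1}$ over- or under-estimates $\alpha$ — which is exactly encoded in the condition $\{-m\alpha\}\gtrless\{-\alpha\}$, since consecutive convergents alternate sides. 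So in the branch $\{-m\alpha\}\ge\{-\alpha\}$ one indeed has $\{m\alpha\}=\|m\alpha\|\le 1/2$, and $\{-r\alpha\}>\{-m\alpha\}$ genuinely yields $\|r\alpha\|<\|m\alpha\|$. Once this sign analysis is pinned down, the rest is the short contradiction with Corollary~\ref{cor:cor} sketched above, and the structural statement about the first and last letters is immediate from Proposition~\ref{pro:main}.
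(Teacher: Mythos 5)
Your proof is correct and follows exactly the route the paper intends (the paper states this proposition as an immediate deduction from Proposition~\ref{pro:main} and Corollary~\ref{cor:cor} without writing out the details): you reduce uniqueness of the heavy (resp.\ light) factor to the nonexistence of a partition point $\{-r\alpha\}$ with $1\le r\le m-1$ lying strictly between $\{-m\alpha\}$ and $1$ (resp.\ between $0$ and $\{-m\alpha\}$), which would contradict $\|m\alpha\|=\min_{1\le r<m_{i+1}}\|r\alpha\|$, and your sign analysis giving $\|m\alpha\|=\{m\alpha\}$ in the branch $\{-m\alpha\}\ge\{-\alpha\}$ is valid because $\|m\alpha\|<\|\alpha\|\le\min(\{\alpha\},\{-\alpha\})$ for a convergent denominator $m>1$ (the case $m=1$ being trivial). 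The only blemish is an off-by-one in your intermediate counting formula --- the number of heavy factors is one \emph{more} than the number of points $\{-r\alpha\}$, $1\le r\le m-1$, strictly above $\{-m\alpha\}$, since the subinterval with left endpoint $\{-m\alpha\}$ is itself heavy --- but your contradiction argument never uses that formula, only the correct implication that two heavy factors force such an interior point, so the proof stands.
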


The previous proposition allows us to state that the abelian repetitions of period $m$, when $m$ is a denominator of a convergent to $\alpha$, have maximum head length and maximum tail length. More precisely, we have the following.

\begin{proposition}\label{prop:mec}
 Let $s_{\alpha} = a_0 a_1 a_2 \cdots$ be a Sturmian word of angle $\alpha$ and $m_{i}$ be a denominator of a
 convergent to $\alpha$. Let $w$ be an abelian power of period $m_{i}$ in $s_{\alpha}$ starting at position
 $n \geq m_{i} - 1$. Then this occurrence of $w$ can be extended to an abelian repetition of period $m_{i}$ with
 maximum head and tail length $m_{i}-1$.
\end{proposition}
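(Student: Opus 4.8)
The plan is to show that the heavy (or light) factors of length $m_i$ have a long common prefix and a long common suffix, which can then serve as the tail and head, respectively, of an extended abelian decomposition. First I would set up notation: assume $\{m_i\alpha\} < 1/2$ (the other case being symmetric), so that by Proposition \ref{pro:int} and Lemma \ref{lem:ordered} the occurrence of $w = a_n \cdots a_{n+km_i-1}$ corresponds to $k$ points $\{\rho+(n+tm_i)\alpha\}$ lying in increasing order inside the interval $I_1 = I(0,\{-m_i\alpha\})$; in particular the factors $a_{n+tm_i}\cdots a_{n+(t+1)m_i-1}$ are all light (all have the same Parikh vector). By Proposition \ref{prop:unique}, since $m_i$ is a denominator of a convergent, there is essentially only one light factor of length $m_i$ — more precisely, light factors of length $m_i$ are associated with intervals $L_j(m_i)$ all contained in $I(0,\{-m_i\alpha\})$, and by Corollary \ref{cor:cor} the point $\{-m_i\alpha\}$ is extremely close to $0$, so there are very few such intervals. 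The key structural fact I want is that all light factors of length $m_i$ share a common prefix of length $m_i - 1$ and a common suffix of length $m_i - 1$: indeed two factors of length $m_i$ associated with adjacent intervals $L_j(m_i), L_{j+1}(m_i)$ differ in exactly one position (the split happened at step $\ell$ where $\{-\ell\alpha\}$ fell inside the parent interval), and when there is a unique light factor this is automatic.

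Next I would carry out the head extension. The hypothesis $n \geq m_i - 1$ guarantees that the $m_i - 1$ letters $a_{n-m_i+1}\cdots a_{n-1}$ preceding position $n$ actually exist in $s_\alpha$. I claim these $m_i - 1$ letters form the suffix of length $m_i - 1$ of a light factor of length $m_i$. The point $\{\rho+(n-m_i)\alpha\} = \{\rho+n\alpha\} - \{m_i\alpha\} \pmod 1$; since $\{\rho+n\alpha\}$ lies near the bottom of $I_1$ and $\{m_i\alpha\}$ is tiny, this point may fall just outside $I_1$ (i.e. the factor starting at $n-m_i$ of length $m_i$ may be heavy), but the factor $a_{n-m_i+1}\cdots a_{n-1}$ of length $m_i - 1$ starting at position $n-m_i+1$ is governed by $\{\rho+(n-m_i+1)\alpha\}$, and I need to show this length-$(m_i-1)$ factor coincides with the common suffix of length $m_i - 1$ of the light factors. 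The cleanest route is to use Proposition \ref{prima}: the letter $a_{n-j}$ for $1 \le j \le m_i - 1$ is determined by whether $\{\rho+(n-m_i)\alpha\}$ — equivalently $\{\rho+n\alpha\}-\{m_i\alpha\}$ — lies in $I_{\sa b}^{-(m_i-j)}$, and comparing with the letters of the light factor starting at position $n$ (governed by $\{\rho+n\alpha\}$ and intervals $I_{\sa b}^{-(m_i - j)}$ as well, shifted by one index) shows these agree because $\{-m_i\alpha\}$ is smaller than all the relevant interval endpoint gaps. Symmetrically, the $m_i - 1$ letters $a_{n+km_i}\cdots a_{n+km_i + m_i - 2}$ following the last block form the prefix of length $m_i - 1$ of a light factor, using that $\{\rho+(n+km_i)\alpha\} = \{\rho+(n+(k-1)m_i)\alpha\} + \{m_i\alpha\}$ is still very close to the top of $I_1$ (within $\{-m_i\alpha\}$ of it).

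Then the abelian decomposition is assembled: write
\begin{equation*}
  a_{n-m_i+1}\cdots a_{n+km_i+m_i-2}
  = u_0\,(a_n\cdots a_{n+m_i-1})(a_{n+m_i}\cdots a_{n+2m_i-1})\cdots(a_{n+(k-1)m_i}\cdots a_{n+km_i-1})\,u_j,
\end{equation*}
with $u_0 = a_{n-m_i+1}\cdots a_{n-1}$ of length $m_i - 1$ and $u_j = a_{n+km_i}\cdots a_{n+km_i+m_i-2}$ of length $m_i - 1$. The $k$ middle blocks all have the common Parikh vector $\PV$ of light factors of length $m_i$ (norm $m_i$); $u_0$ is a proper suffix of a light factor, hence its Parikh vector is obtained from $\PV$ by deleting one letter, so it is strictly contained in $\PV$; likewise $u_j$ is a proper prefix of a light factor, so $\PV_{u_j} \subset \PV$. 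Hence this is a genuine abelian decomposition of abelian period $m_i$ with head and tail both of length $m_i - 1$, which is the maximum possible for period $m_i$.

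\textbf{Main obstacle.} The delicate point is verifying that the $m_i - 1$ letters on either side of $w$ really do agree with the common suffix (resp.\ prefix) of the light factors of length $m_i$ — in other words, that pushing the relevant point out of $I_1$ by the tiny amount $\{-m_i\alpha\}$ (or the analogous push at the top) does not change any of the first/last $m_i - 1$ letters of the length-$m_i$ window. This is where the fact that $m_i$ is a denominator of a convergent is essential: Corollary \ref{cor:cor} tells us $\|m_i\alpha\| = \min_{1 \le m < m_{i+1}}\|m\alpha\|$, so none of the points $\{-\ell\alpha\}$ for $1 \le \ell \le m_i - 1$ lies within distance $\|m_i\alpha\|$ of $0$ or $1$, which is exactly what is needed to guarantee that shifting by $\{m_i\alpha\}$ keeps the point on the same side of every interval endpoint $\{-\ell\alpha\}$ relevant to the first $m_i - 1$ letters. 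I would handle the boundary/coding issues at $0$ and $1-\alpha$ exactly as in Theorem \ref{the:maingab}, noting that by Proposition \ref{prop:unique} the unique light factor starts and ends with $\sa b$, which pins down the common prefix and suffix cleanly.
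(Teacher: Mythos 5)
Your overall plan (extend $w$ by $m_i-1$ letters on each side and check that the Parikh vectors of head and tail are contained in $\PV$) is the right one, but the structural facts you build it on are false, and the mechanism that actually makes the proposition work is missing. First, under your assumption $\{m_i\alpha\}<1/2$ the point $\{-m_i\alpha\}=1-\|m_i\alpha\|$ is close to $1$, not to $0$: the interval $I(0,\{-m_i\alpha\})$ covers almost the whole torus, so there are $m_i$ light factors and a \emph{unique heavy} one -- Proposition \ref{prop:unique} gives uniqueness for the class complementary to the one the blocks of $w$ belong to. Consequently your ``key structural fact'' that all light factors of length $m_i$ share a common prefix and a common suffix of length $m_i-1$ is simply wrong: for the Fibonacci word and $m_i=5$ the light factors are $\sa{aabab}$, $\sa{abaab}$, $\sa{ababa}$, $\sa{baaba}$, $\sa{babaa}$, which share no common prefix or suffix at all; likewise, factors attached to adjacent intervals $L_j(m_i)$, $L_{j+1}(m_i)$ need not differ in exactly one position. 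Your resolution of the ``main obstacle'' also fails: the arc of length $\|m_i\alpha\|$ joining $\{\rho+(n-m_i)\alpha\}$ to $\{\rho+n\alpha\}$ is anchored at an arbitrary point of the torus, so the fact that no $\{-\ell\alpha\}$ with $1\le\ell<m_i$ lies within $\|m_i\alpha\|$ of $0$ or $1$ does not prevent that arc from straddling some $\{-\ell\alpha\}$; in general the length-$m_i$ window starting at $n-m_i$ is a \emph{different word} from the one starting at $n$, and $u_0$ need not be a suffix of any light factor.

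The paper's argument works purely at the level of Parikh vectors and needs only two inputs. If $w$ has the maximum exponent $k_{m_i}$, then by maximality the length-$m_i$ windows immediately preceding and following $w$ must carry the complementary Parikh vector $\PV'$; by Proposition \ref{prop:unique} there is a unique factor with Parikh vector $\PV'$, and it begins and ends with the letter it has in excess over $\PV$ ($\sa{a}$ if $\PV'$ is heavy, $\sa{b}$ if it is light). Deleting that first (resp.\ last) letter from $\PV'$ therefore yields a vector contained in $\PV$, which is exactly the required condition on the head $a_{n-m_i+1}\cdots a_{n-1}$ (resp.\ on the tail). If the exponent of $w$ is not maximal, the adjacent window has Parikh vector either $\PV$ (and then any of its sub-windows of length $m_i-1$ is automatically contained in $\PV$) or $\PV'$ (and then $w$ sits inside a power of maximal exponent and the previous case applies). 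Your proposal never invokes the maximality of the exponent or the first/last letter of the unique complementary factor, and these are precisely the points where the hypothesis that $m_i$ is a convergent denominator enters; without them the containment $\PV_{u_0}\subset\PV$ is not established.
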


\begin{proof}
  Let $w=a_{n}\cdots a_{n+m_{i}k-1}$ be an abelian power of period $m_{i}$ and exponent $k$ in $s_{\alpha}$. Suppose first that $w$ has maximum exponent $k_{m_{i}}$. We claim that the Parikh vectors of $a_{n-m_{i}+1}\cdots a_{n-1}$ and $a_{n+m_{i}k}\cdots a_{n+m_{i}k+m_{i}-2}$, both of length $m_{i}-1$, are contained in the Parikh vector $\PV$ of $a_{n}\cdots a_{n+m_{i}-1}$. This implies that the abelian repetition $a_{n-m_{i}+1}\cdots a_{n+m_{i}k+m_{i}-2}$ of period $m_{i}$ has maximum head length and maximum tail length, and the statement follows.

In order to prove the claim, let $\PV'$ be the Parikh vector of the factors of length $m_{i}$ of $s_{\alpha}$ that do not have Parikh vector $\PV$. Suppose that $\PV$ is the Parikh vector of the heavy (resp.~light) factors and that $\PV'$ is the Parikh vector of the light (resp.~heavy) factors. By the maximality of $k$, the Parikh vector of the two factors of length $m_{i}$, $a_{n-m_{i}}\cdots a_{n-1}$ and $a_{n+m_{i}k_{m_{i}}}\cdots a_{n+m_{i}k_{m_{i}}+m_{i}-1}$, respectively preceding and following the occurrence of $w$ in $s_{\alpha}$, must be $\PV'$ (we can extend $s_{\alpha}$ to the left by one letter if needed). 
By Proposition \ref{prop:unique}, there is a unique light (resp.~heavy) factor having Parikh vector $\PV'$.  
Moreover, this unique factor starts and ends with $\sa{b}$ (resp.~$\sa{a}$). Therefore, $a_{n-m_{i}}=a_{n+m_{i}k_{m_{i}}+m_{i}-1}$ is equal to $\sa{b}$ (resp.~to $\sa{a}$), and the claim is proved.

Finally, if the exponent of $w$ is not maximum, then the Parikh vector of the factor of length $m_{i}$ preceding (resp.~following) $w$ in $s_{\alpha}$ is either $\PV'$, if $w$ is a prefix (resp.~a suffix) of an abelian power of maximum exponent (and in this case the previous reasoning applies) or $\PV$. In both cases $w$ can be extended to an abelian repetition with maximum head length and maximum tail length. 
\end{proof}

\begin{corollary}
Let $s_{\alpha}$ be a Sturmian word of angle $\alpha$. Let $m_{i}$ be a  denominator of a convergent to $\alpha$. Then $k'_{m_{i}}=k_{m_{i}}+2-2/m_{i}$.
\end{corollary}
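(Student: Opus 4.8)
The plan is to compute the maximal length of an abelian repetition of period $m_i$ directly from the structure obtained in Proposition \ref{prop:mec}, and then divide by $m_i$. First I would recall that by Theorem \ref{the:main1} the longest abelian \emph{power} of period $m_i$ has exponent $k_{m_i}$, hence length $m_i k_{m_i}$. By Proposition \ref{prop:mec}, any such maximal abelian power occurring sufficiently far from the start extends to an abelian repetition of period $m_i$ with head and tail of maximal length $m_i - 1$ each; moreover I should argue that one cannot do better than a head of length $m_i-1$ (a head of length $\geq m_i$ would constitute an extra block of period $m_i$, contradicting maximality of the power it sits on, using Lemma \ref{lem:ordered} and Proposition \ref{pro:int}), and symmetrically for the tail. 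So the longest abelian repetition of period $m_i$ has length $(m_i - 1) + m_i k_{m_i} + (m_i - 1) = m_i k_{m_i} + 2m_i - 2$.

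Next I would divide by the period to obtain the maximal exponent of an abelian repetition of period $m_i$:
\begin{equation*}
  k'_{m_i} = \frac{m_i k_{m_i} + 2m_i - 2}{m_i} = k_{m_i} + 2 - \frac{2}{m_i},
\end{equation*}
which is exactly the claimed identity. The only subtlety is to confirm that such a long occurrence actually exists inside $s_\alpha$ (not merely in some one-sided extension): since $m_i$ is a denominator of a convergent, $k_{m_i} \geq 2$, so by Theorem \ref{the:maingab} there are infinitely many starting positions $n$ admitting an abelian power of period $m_i$ and exponent $k_{m_i}$, and by density (Kronecker's theorem, as in Proposition \ref{pro:int}) we can choose one with $n \geq m_i - 1$ whose left extension of length $m_i - 1$ also lies within $s_\alpha$; Proposition \ref{prop:mec} then supplies the head and tail.

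The step I expect to be the main obstacle is the \emph{upper bound} — showing no abelian repetition of period $m_i$ can have head or tail longer than $m_i - 1$, equivalently that $k'_{m_i}$ is not larger than $k_{m_i} + 2 - 2/m_i$. Here the point is that a longer head would have to contain a full block of $m_i$ consecutive letters with the common Parikh vector $\PV$, turning the repetition into an abelian power of period $m_i$ and exponent $k_{m_i} + 1$, contradicting the definition of $k_{m_i}$ in Theorem \ref{the:main1}; the delicate part is handling the boundary cases (the exceptional points $\{-rm_i\alpha\}$ of Theorem \ref{the:maingab}) to be sure the contradiction genuinely goes through, and checking that a head of length exactly $m_i-1$ and a tail of length exactly $m_i-1$ can be realized \emph{simultaneously}, which is precisely the content of Proposition \ref{prop:mec} together with the uniqueness statement of Proposition \ref{prop:unique}. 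Once this is in place, the formula follows immediately from the arithmetic above.
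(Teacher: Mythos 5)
Your proposal is correct and follows essentially the paper's route: the corollary is obtained directly from Proposition \ref{prop:mec} (an occurrence of a maximal abelian power of period $m_i$ extends to an abelian repetition with head and tail of the maximum possible length $m_i-1$) combined with Theorem \ref{the:main1} (the middle blocks form an abelian power of exponent at most $k_{m_i}$), giving length $m_ik_{m_i}+2(m_i-1)$ and hence exponent $k_{m_i}+2-2/m_i$. The upper bound you single out as the main obstacle is in fact immediate from the definition of an abelian decomposition --- the head and tail have Parikh vectors strictly contained in $\PV$, so their lengths are at most $m_i-1$ by definition --- and therefore no argument about extra blocks or the exceptional points of Theorem \ref{the:maingab} is needed there.
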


\begin{table}
\centering  
\begin{scriptsize}
\begin{raggedright}
\begin{tabular}{c *{30}{@{\hspace{1.4mm}}l}}
$m$\hspace{1mm} & {\bf 1} & {\bf 2} & {\bf 3} & 4 & {\bf 5} & 6 & 7 & {\bf 8} & 9 & 10 & 11 & 12 & {\bf 13} & 14 & 15 & 16 & 17 & 18 
\\
\hline \\
$\|m(\phi-1)\|$\hspace{1mm} &  {\bf 0.38} &  {\bf 0.24} & {\bf 0.15} & 0.47 & {\bf 0.09} & 0.29 & 0.33 & {\bf 0.06} & 0.44 & 0.18 & 0.20 & 0.42 & {\bf 0.03} & 0.35 & 0.27 & 0.11 & 0.49 & 0.13 
\\
\hline \rule[0pt]{0pt}{12pt}
\end{tabular}
\end{raggedright}\caption{\label{tab:fici3} The first values of the sequence $\|m(\phi-1)\|$. The values corresponding to the Fibonacci numbers are in bold.}
\end{scriptsize}
\end{table} 
 
In the context of ordinary powers, it is interesting to study the largest power occurring in a word. However, in the
abelian setting it does not make sense to study the analogous quantity since any Sturmian word contains abelian powers
of arbitrarily large exponent. Instead, we propose the following notion of abelian critical exponent, which measures
the maximum ratio between the exponent and the period of an abelian repetition.

\begin{definition}
  Let $s_{\alpha}$ be a Sturmian word of angle $\alpha$. For every integer $m>1$, let $k_{m}$ (resp. $k'_{m}$) be the
  maximum exponent of an abelian power (resp.~abelian repetition) of period $m$ in $s_{\alpha}$. The \emph{abelian
  critical exponent of $s_{\alpha}$} is defined as
  \begin{equation}\label{eq:1}
    \act(s_\alpha) = \limsup_{m \to \infty} \frac{k_{m}}{m} = \limsup_{m \to \infty} \frac{k'_{m}}{m}.
  \end{equation}
Notice that, indeed, the two superior limits  above  coincide for any Sturmian word $s_{\alpha}$ since
by definition one has $k_{m}\leq k'_{m}<k_{m}+2$ for every $m \geq 1$.
\end{definition}

Before studying abelian critical exponents further, we explore their connection to a number-theoretical concept known as
the Lagrange spectrum.

\begin{definition}
  Let $\alpha$ be a real number. The \emph{Lagrange constant of $\alpha$} is defined as
  \begin{equation*}
    \lambda(\alpha) = \limsup_{m\to\infty} (m\|m\alpha\|)^{-1}.
  \end{equation*}
\end{definition}

Let us briefly motivate the definition of the Lagrange constants. The famous Hurwitz's Theorem states that for every irrational $\alpha$ there exists infinitely many rational numbers $n/m$ such that $$\left|\alpha - \frac{n}{m}\right| < \frac{1}{\sqrt{5}m^2}$$ and, moreover, the constant $\sqrt{5}$ is best possible. Indeed, if $\alpha=\phi-1$, then for every $A>\sqrt{5}$ the inequality 
$$\left|\frac{n}{m}-\alpha\right|< \frac{1}{Am^2}$$
has only a finite number of solutions $n/m$.
 
%
%

For a general irrational $\alpha$, the infimum of the real numbers $\lambda$ such that for every $A>\lambda$ the inequality 
$\left|n/m-\alpha\right|< 1/Am^2$
has only a finite number of solutions $n/m$, is indeed the Lagrange constant $\lambda(\alpha)$ of $\alpha$. The set of all finite Lagrange
constants of irrationals is called the \emph{Lagrange spectrum} $L$. The Lagrange spectrum has been
extensively studied, see for instance \cite{Cusick_and_Flahive}. Yet its structure is still not completely understood. Markov \cite{Ma} proved that
$L \cap (-\infty, 3) = \{k_1 = \sqrt{5} < k_2 = \sqrt{8} < k_3 = \sqrt{221}/5 < \ldots\}$
where $k_n$ is a sequence of quadratic irrational numbers  converging to $3$ (so the beginning of $L$ is discrete). Then Hall \cite{Hall} proved  that $L$ contains a whole half line, and Freiman \cite{Freiman} determined the biggest half line that is contained
in $L$, which is $[c, +\infty)$, with
$$c=\frac{2221564096+283748\sqrt{462}}{491993569}=4.5278295661\ldots $$

Using the terminology of Lagrange constants, we have the following direct consequence of Theorem \ref{the:main1}.

\begin{theorem}
  Let $s_\alpha$ be a Sturmian word of angle $\alpha$. Then $\act(s_\alpha) = \lambda(\alpha)$. In other words, the
  abelian critical exponent of a Sturmian word is the Lagrange constant of its angle.
\end{theorem}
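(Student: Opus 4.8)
The plan is to unwind the two definitions and show they coincide term by term in the limit. Recall that Theorem \ref{the:main1} gives $k_m = \lfloor 1/\|m\alpha\| \rfloor$, so the sequence defining $\act(s_\alpha)$ is
\begin{equation*}
  \frac{k_m}{m} = \frac{1}{m}\left\lfloor \frac{1}{\|m\alpha\|} \right\rfloor,
\end{equation*}
while the Lagrange constant is $\lambda(\alpha) = \limsup_{m\to\infty} (m\|m\alpha\|)^{-1}$. The key elementary observation is the sandwich
\begin{equation*}
  \frac{1}{\|m\alpha\|} - 1 < \left\lfloor \frac{1}{\|m\alpha\|} \right\rfloor \leq \frac{1}{\|m\alpha\|},
\end{equation*}
valid for every $m \geq 1$ (note $\|m\alpha\| > 0$ since $\alpha$ is irrational). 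Dividing by $m$ gives
\begin{equation*}
  \frac{1}{m\|m\alpha\|} - \frac{1}{m} < \frac{k_m}{m} \leq \frac{1}{m\|m\alpha\|}.
\end{equation*}

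From these inequalities I would take the $\limsup$ as $m \to \infty$. The right-hand inequality immediately yields $\act(s_\alpha) = \limsup k_m/m \leq \limsup (m\|m\alpha\|)^{-1} = \lambda(\alpha)$. For the reverse inequality, the term $1/m$ tends to $0$, so $\limsup_{m}(k_m/m) \geq \limsup_m\big((m\|m\alpha\|)^{-1} - 1/m\big) = \limsup_m (m\|m\alpha\|)^{-1} = \lambda(\alpha)$, using the standard fact that adding a null sequence does not change the $\limsup$. Hence $\act(s_\alpha) = \lambda(\alpha)$, including the case where both quantities are $+\infty$ (if $\lambda(\alpha) = \infty$ the lower bound forces $\act(s_\alpha) = \infty$, and conversely). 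The equality $\limsup k_m/m = \limsup k'_m/m$ is already built into the definition of $\act(s_\alpha)$ via the observation $k_m \leq k'_m < k_m + 2$, so nothing further is needed there.

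There is essentially no serious obstacle here: the proof is a one-line consequence of Theorem \ref{the:main1} together with the floor-function sandwich and the elementary behaviour of $\limsup$ under addition of a null sequence. The only point requiring a moment's care is making sure the argument handles the infinite case uniformly — but since all the inequalities above hold in the extended reals $[0,+\infty]$, this is automatic. If one prefers, one can also phrase the reverse direction by extracting, along any subsequence $m_j$ realizing the $\limsup$ of $(m\|m\alpha\|)^{-1}$, the corresponding values $k_{m_j}/m_j$, which converge to the same limit since $1/m_j \to 0$; this avoids even invoking the $\limsup$-addition lemma explicitly.
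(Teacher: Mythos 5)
Your proof is correct and fills in exactly the argument the paper leaves implicit: the paper states this theorem as a ``direct consequence of Theorem \ref{the:main1}'' without further detail, and the floor-function sandwich combined with $1/m \to 0$ is precisely the intended justification. Nothing to add.
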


Let us next derive a (known) formula for the Lagrange constant of an irrational number. For this we need to recall some
elementary results on continued fractions. For full details see \cite[Chap.~X]{Hardy_and_Wright}.

Let $\alpha$ be a fixed irrational with continued fraction expansion $[a_0; a_1, a_2, \ldots]$. Let us set
$\alpha_i = [a_i; a_{i+1}, a_{i+2}, \ldots].$ Since $\alpha = [a_0; a_1, a_2, \ldots, a_i, \alpha_{i+1}]$, we have that
$$\alpha = \frac{\alpha_{i+1}n_i + n_{i-1}}{\alpha_{i+1}m_i + m_{i-1}}.$$ Therefore, by applying the identity
$m_i r_{i-1} - n_i m_{i-1} = (-1)^i$, we obtain that
\begin{equation}\label{eq:difference}
  \alpha - \frac{n_i}{m_i} = \frac{(-1)^i}{m_i(\alpha_{i+1}m_i + m_{i-1})},
\end{equation}
so $$(m_i\|m_i\alpha\|)^{-1} = \alpha_{i+1} + \frac{m_{i-1}}{m_i}.$$ By induction it is easy prove the
well-known fact that $m_{i-1}/m_i = [0;a_i,a_{i-1},\ldots,a_1]$. Consequently,
$$(m_i\|m_i\alpha\|)^{-1} = [a_{i+1};a_{i+2},\ldots]+[0;a_i,a_{i-1},\ldots,a_1].$$ Let $m$ be an integer
such that $m_i < m < m_{i+1}$ for some $i \geq 0$. By Theorem \ref{theor:lang}, we have
$\|m_i\alpha\| < \|m\alpha\|$, so $m_i\|m_i\alpha\| < m\|m\alpha\|$. Thus,
\begin{equation}\label{eq:lagrange_formula}
  \lambda(\alpha) = \limsup_{i\to\infty} (m_i\|m_i\alpha\|)^{-1} = \limsup_{i\to\infty} \left([a_{i+1};a_{i+2},\ldots]+[0;a_i,a_{i-1},\ldots,a_1]\right).
\end{equation}

\begin{definition}
  Let $\alpha$ and $\beta$ be two real numbers having continued fraction expansions $[a_0; a_1, a_2, \ldots]$ and
  $[b_0; b_1, b_2, \ldots]$ respectively. If there exists integers $N$ and $M$ such that $a_{N+i} = b_{M+i}$ for all
  $i \geq 0$, then we say that $\alpha$ and $\beta$ are \emph{equivalent}. In other words, two numbers are equivalent
  if their continued fraction expansions ultimately coincide.
\end{definition}

By Equation \eqref{eq:lagrange_formula} it is immediate that equivalent numbers have the same Lagrange constant. Equation
\eqref{eq:lagrange_formula} directly implies the following important proposition.

\begin{proposition}\label{prp:act_formula}
  Let $s_\alpha$ be a Sturmian word of angle $\alpha$. Then
  \begin{equation*}
    \act(s_\alpha) = \limsup_{i\to\infty} \left([a_{i+1};a_{i+2},\ldots]+[0;a_i,a_{i-1},\ldots,a_1]\right).
  \end{equation*}
\end{proposition}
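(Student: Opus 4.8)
The plan is to chain together two results already established in the excerpt. First, the theorem immediately preceding this proposition (call it the Lagrange theorem) states $\act(s_\alpha) = \lambda(\alpha)$, where $\lambda(\alpha)$ is the Lagrange constant of the angle $\alpha$. Second, Equation \eqref{eq:lagrange_formula} gives the explicit formula
\begin{equation*}
  \lambda(\alpha) = \limsup_{i\to\infty} \left([a_{i+1};a_{i+2},\ldots]+[0;a_i,a_{i-1},\ldots,a_1]\right).
\end{equation*}
So the proof is genuinely a one-line deduction: combine these two equalities. I would simply write: by the preceding theorem $\act(s_\alpha)=\lambda(\alpha)$, and by Equation \eqref{eq:lagrange_formula} the right-hand side equals the stated $\limsup$; hence the claim.

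Since the derivation leading to \eqref{eq:lagrange_formula} is already spelled out in the text (the identity $(m_i\|m_i\alpha\|)^{-1} = \alpha_{i+1} + m_{i-1}/m_i$, the fact $m_{i-1}/m_i = [0;a_i,a_{i-1},\ldots,a_1]$, and Theorem \ref{theor:lang} controlling the intermediate denominators $m$ with $m_i<m<m_{i+1}$), there is no real obstacle here. If I wanted to make the proof marginally more self-contained rather than a bare citation, I would restate that the subsequence of "good" denominators $(m_i)$ realizes the $\limsup$ of $(m\|m\alpha\|)^{-1}$ because any non-convergent denominator $m$ with $m_i<m<m_{i+1}$ satisfies $m\|m\alpha\| > m_i\|m_i\alpha\|$ by Theorem \ref{theor:lang}, so it cannot contribute to the superior limit; thus $\limsup_m (m\|m\alpha\|)^{-1} = \limsup_i (m_i\|m_i\alpha\|)^{-1}$, and then expand via the continued-fraction identity.

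The only "hard part" — and it is not hard, just a matter of bookkeeping — is making sure the definitional chain $\act(s_\alpha) = \limsup k_m/m$, the estimate $k_m = \lfloor 1/\|m\alpha\| \rfloor$ from Theorem \ref{the:main1}, and the passage to $\limsup (m\|m\alpha\|)^{-1}$ are correctly aligned (in particular that the floor and the $1/m$ scaling do not affect the superior limit, which they do not since $\lfloor x\rfloor/x \to 1$). But all of this is already absorbed into the preceding theorem's statement $\act(s_\alpha)=\lambda(\alpha)$, so for this proposition I would keep the proof to the single sentence: "This is an immediate consequence of the preceding theorem and Equation \eqref{eq:lagrange_formula}."
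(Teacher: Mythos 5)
Your proposal is correct and is exactly the paper's argument: the text states that Equation \eqref{eq:lagrange_formula} (combined with the preceding theorem $\act(s_\alpha)=\lambda(\alpha)$) directly implies the proposition, and no further proof is given. Your supplementary remarks about why non-convergent denominators cannot contribute to the superior limit mirror the derivation the paper already carries out just before stating \eqref{eq:lagrange_formula}.
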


We have thus obtained a formula for the abelian critical exponent of a Sturmian word in terms of the partial quotients
of its angle. A formula for the usual critical exponent of Sturmian words can also be expressed in terms of partial
quotients; see \cite{CaDel00,Justin2001363,Damanik200223,Pelto}.

Proposition \ref{prp:act_formula} enables us to study the abelian critical exponent of Sturmian words. The first
application is the following result. Recall that an infinite word is $\beta$-power-free if it does not contain
repetitions of exponent $\beta$ or larger.

\begin{theorem}\label{theor:act_finite}
  Let $s_\alpha$ be a Sturmian word of angle $\alpha$. The following are equivalent:
  \begin{enumerate}[(i)]
    \item $\act(s_\alpha)$ is finite,
    \item $s_\alpha$ is $\beta$-power-free for some $\beta \geq 2$,
    \item $\alpha$ has bounded partial quotients.
  \end{enumerate}
\end{theorem}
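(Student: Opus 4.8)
The plan is to prove the two equivalences $(i)\Leftrightarrow(iii)$ and $(ii)\Leftrightarrow(iii)$ separately: the first directly from the continued-fraction formula of Proposition~\ref{prp:act_formula}, the second by invoking the classical power-freeness criterion of Mignosi~\cite{Mi89}. No new idea is needed here: $(i)\Leftrightarrow(iii)$ is a short computation with the formula, and $(ii)\Leftrightarrow(iii)$ is essentially imported.

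For $(i)\Leftrightarrow(iii)$ I would write $\alpha=[a_0;a_1,a_2,\ldots]$ and start from
\[
\act(s_\alpha)=\limsup_{i\to\infty}\bigl([a_{i+1};a_{i+2},\ldots]+[0;a_i,a_{i-1},\ldots,a_1]\bigr).
\]
The key point is that each summand is pinned down by the single partial quotient $a_{i+1}$: for every $i\geq 1$ one has $a_{i+1}<[a_{i+1};a_{i+2},\ldots]<a_{i+1}+1$ (since $[a_{i+2};a_{i+3},\ldots]>1$, the continued fraction expansion of the irrational $\alpha$ being infinite) and $0<[0;a_i,a_{i-1},\ldots,a_1]\leq 1$ (being the reciprocal of a number that is at least $a_i\geq 1$), so that
\[
a_{i+1}<[a_{i+1};a_{i+2},\ldots]+[0;a_i,a_{i-1},\ldots,a_1]<a_{i+1}+2 .
\]
Passing to superior limits then yields $\limsup_{i}a_i\leq\act(s_\alpha)\leq\limsup_{i}a_i+2$, so $\act(s_\alpha)$ is finite exactly when $\limsup_i a_i<\infty$; and since the $a_i$ with $i\geq 1$ are positive integers, this holds exactly when the whole sequence $(a_i)_{i\geq 0}$ is bounded, that is, when $\alpha$ has bounded partial quotients.

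For $(ii)\Leftrightarrow(iii)$ I would first observe that a word that is $\beta$-power-free is also $\beta'$-power-free for every $\beta'\geq\beta$, so condition $(ii)$ is equivalent to $s_\alpha$ being $\beta$-power-free for \emph{some} real $\beta$ (one can always enlarge an admissible exponent past $2$; since every Sturmian word contains a square, no $\beta\leq 2$ is admissible anyway). It then remains to recall the classical theorem of Mignosi~\cite{Mi89}, which states that a Sturmian word $s_\alpha$ is $\beta$-power-free for some $\beta$ if and only if $\alpha$ has bounded partial quotients; equivalently, the known formulas for the ordinary critical exponent of $s_\alpha$ in terms of the partial quotients of $\alpha$ \cite{CaDel00,Justin2001363,Damanik200223,Pelto} show that this exponent is finite precisely in that case. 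Putting the two equivalences together would finish the argument. I do not expect a genuine obstacle: the only place that needs care is the elementary continued-fraction bookkeeping — in particular checking that the strict inequalities above survive the passage to $\limsup$ and that the finitely many low-index terms, which cannot change the superior limit, are handled correctly — while the substantive input is supplied by Proposition~\ref{prp:act_formula} and by \cite{Mi89}.
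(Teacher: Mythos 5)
Your proposal is correct and follows the same route as the paper: the paper likewise splits the statement into $(i)\Leftrightarrow(iii)$, declared ``evident'' from Proposition~\ref{prp:act_formula}, and $(ii)\Leftrightarrow(iii)$, imported from \cite{Mi89}. Your explicit two-sided bound $a_{i+1}<[a_{i+1};a_{i+2},\ldots]+[0;a_i,\ldots,a_1]<a_{i+1}+2$ simply fills in the detail the paper leaves implicit, and it is accurate.
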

\begin{proof}
  It is evident from Proposition \ref{prp:act_formula} that $\act(s_\alpha)$ is finite if and only if $\alpha$ has
  bounded partial quotients. By a well-known result, $s_\alpha$ is $\beta$-power-free for some $\beta \geq 2$ if and
  only if $\alpha$ has bounded partial quotients; see \cite{Mi89}.
\end{proof}

\begin{theorem}\label{theor:sqrt5}
  For every Sturmian word $s_{\alpha}$ of angle $\alpha$, we have $\act(s_\alpha) \geq \sqrt{5}$. Moreover,
  $\act(s_\alpha) = \sqrt{5}$ if and only if $\alpha$ is equivalent to $\phi-1$. In particular, the abelian critical
  exponent of the Fibonacci word is $\sqrt{5}$.
\end{theorem}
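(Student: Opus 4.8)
The plan is to derive everything from the formula $\act(s_\alpha) = \limsup_{i\to\infty}\left([a_{i+1};a_{i+2},\ldots]+[0;a_i,a_{i-1},\ldots,a_1]\right)$ of Proposition \ref{prp:act_formula}, since this reduces the statement to the classical fact that the minimum of the Lagrange spectrum is $\sqrt5$, attained exactly by numbers equivalent to $\phi-1$ (i.e.\ those whose partial quotients are eventually all $1$). The inequality $\act(s_\alpha)\geq\sqrt5$ is then just Hurwitz's Theorem restated: by Equation \eqref{eq:lagrange_formula} we have $\lambda(\alpha)=\limsup_i(m_i\|m_i\alpha\|)^{-1}$, and Hurwitz guarantees $m_i\|m_i\alpha\|<1/\sqrt5$ for infinitely many $i$, whence $\lambda(\alpha)\geq\sqrt5$; combined with the earlier theorem $\act(s_\alpha)=\lambda(\alpha)$ this gives the bound. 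Alternatively, and perhaps more self-containedly, I would argue directly from the sum formula: writing $x=[a_{i+1};a_{i+2},\ldots]\geq 1$ and $y=[0;a_i,\ldots,a_1]\in(0,1)$, one checks that $x+y$ together with the ``dual'' quantity (the analogous sum centered at a nearby index) cannot both stay below $\sqrt5$; this is the standard three-distance / $\sqrt5$ argument.

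For the characterization of equality, I would first show the easy direction: if $\alpha$ is equivalent to $\phi-1$, then the partial quotients of $\alpha$ are eventually all equal to $1$, so by Proposition \ref{prp:act_formula} the $\limsup$ equals $[1;\overline1]+[0;\overline1] = \phi + (\phi-1) = 2\phi-1 = \sqrt5$. In particular, taking $\alpha=\phi-1=[0;\overline1]$ itself, where every tail and every reversed prefix is $[0;\overline1]$ or $[1;\overline1]$, gives $\act(f)=\sqrt5$ for the Fibonacci word. For the converse — that $\act(s_\alpha)=\sqrt5$ forces $\alpha$ equivalent to $\phi-1$ — I would argue by contraposition: if some partial quotient $a_j\geq 2$ occurs infinitely often, then along the subsequence of indices $i$ with $a_{i+1}\geq 2$ we get $[a_{i+1};a_{i+2},\ldots]\geq 2$, and since $[0;a_i,\ldots,a_1]>0$ strictly (it is a positive real, and in fact one can bound it below, e.g.\ $>1/(a_i+1)\geq \tfrac15$ if we also control $a_i$, or more carefully use that consecutive sums cannot simultaneously be small), the $\limsup$ exceeds $\sqrt5$; if instead all partial quotients are eventually $1$, then $\alpha$ is by definition equivalent to $\phi-1$. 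The cleanest route is to cite Markov's theorem as quoted in the excerpt ($L\cap(-\infty,3)=\{\sqrt5<\sqrt8<\cdots\}$ with $\sqrt5$ isolated and realized only by the golden ratio class), but a direct elementary argument is also feasible.

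The one genuinely delicate point is the strict separation in the converse direction: showing that $\act(s_\alpha)=\sqrt5$ exactly when $\alpha\sim\phi-1$, rather than merely $\act(s_\alpha)$ being close to $\sqrt5$. The subtlety is that a single large partial quotient $a_{i+1}$ could in principle be ``compensated'' by $[0;a_i,\ldots,a_1]$ being tiny, so one must verify this cannot happen persistently: if $a_{i+1}\geq 2$ then $[a_{i+1};\ldots]\geq 2$, and one needs a uniform positive lower bound on $[0;a_i,\ldots,a_1]$ along such indices — or, more robustly, one shows that whenever $a_{i+1}=1$ for all large $i$ (so the first term tends to $\phi$) one still has $[0;a_i,\ldots,a_1]\to\phi-1$, forcing eventual periodicity; and whenever $a_{i+1}\geq 2$ infinitely often the sum is bounded below by $2+\liminf[0;a_i,\ldots,a_1]$, which one argues is $>\sqrt5-2$ using the interplay of consecutive convergent denominators (precisely the content of Markov's spectrum computation). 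I expect this to be the main obstacle; the inequality $\act(s_\alpha)\geq\sqrt5$ and the Fibonacci computation are routine. Since the paper already states Markov's result explicitly, the intended proof almost certainly just invokes it together with the equivalence-invariance of the Lagrange constant noted right after Equation \eqref{eq:lagrange_formula}.
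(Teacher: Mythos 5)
Your strategy is sound and you have correctly located the delicate point, but that point is exactly where the proof has real content, and the direct argument you sketch for it does not close as written. The claim that $a_{i+1}\geq 2$ occurring infinitely often forces the $\limsup$ above $\sqrt{5}$ because $[a_{i+1};a_{i+2},\ldots]\geq 2$ while $[0;a_i,\ldots,a_1]>0$ fails numerically: one needs $[0;a_i,\ldots,a_1]>\sqrt{5}-2\approx 0.236$, and this backward continued fraction is roughly $1/a_i$, hence too small whenever $a_i\geq 4$ (even your proposed bound $1/5$ falls short). A large $a_i$ does of course blow up the term at index $i-1$ instead, but that transfer of the estimate to a neighbouring index is precisely the bookkeeping that must be done, and your sketch defers it. The paper avoids the backward fractions entirely by a case analysis on the partial quotients: (i) if $a_i\geq 3$ infinitely often, then $\act(s_\alpha)\geq 3$; (ii) if the $a_i$ are eventually all equal to $2$, then $\alpha$ is equivalent to $\sqrt{2}=[2;\overline{2}]$ and $\act(s_\alpha)=\lambda(\sqrt{2})=\sqrt{8}$; (iii) if the values $1$ and $2$ both occur infinitely often among eventually $\{1,2\}$-valued quotients, then one of the patterns $2,1,1$ or $2,1,2$ occurs infinitely often, and since odd convergents underestimate, $[2;1,1,a_k,\ldots]>[2;1,1]=\tfrac52>\sqrt{5}$ and $[2;1,2,a_k,\ldots]>[2;1,2]=\tfrac83$. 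In every case $\act(s_\alpha)>\sqrt{5}$, which is the converse you need.

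Your fallback of citing Markov's theorem (equivalently the refined Hurwitz theorem: $\lambda(\alpha)\geq\sqrt{8}$ unless $\alpha$ is equivalent to $\phi$) is legitimate and would make the converse a one-line citation, at the cost of importing a nontrivial classical result where the paper prefers a self-contained two-symbol argument; the paper only quotes Markov's description of the spectrum as motivation and does not use it in the proof. Your handling of the lower bound $\act(s_\alpha)\geq\sqrt{5}$ via Hurwitz together with $\act(s_\alpha)=\lambda(\alpha)$, and the computation $\phi+(\phi-1)=\sqrt{5}$ for the Fibonacci word, match the paper and are fine (your Hurwitz route for the inequality is in fact cleaner than the paper's appeal to the all-ones expansion minimizing the $\limsup$). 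To make the proposal a proof, either commit to the Markov/Hurwitz citation explicitly or supply the three-case analysis above.
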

\begin{proof}
  It is clear from Proposition \ref{prp:act_formula} that $\act(s_\alpha)$ is as small as possible when $\alpha$ is
  equivalent to $\phi = [1;\overline{1}]$. It is straightforward to compute that $\lambda(\phi) = \sqrt{5}$, so
  $\act(s_\alpha) \geq \act(s_{1-\phi,1-\phi}) = \sqrt{5}$ for all Sturmian words $s_\alpha$.

  What is left is to prove is that if $\act(s_\alpha) = \sqrt{5}$, then $\alpha$ is equivalent to $\phi-1$. Suppose
  that $\alpha = [0;a_1,a_2,\ldots]$ and $\act(s_\alpha) = \sqrt{5}$. If $a_i \geq 3$ for infinitely many $i$, then
  clearly $\act(s_\alpha) \geq 3 > \sqrt{5}$. Thus, there exists $M > 0$ such that $a_i < 3$ for all $i \geq M$. We are
  left with two cases: either $a_i = 1$ for only finitely many $i$ or the sequence $(a_i)$ takes values $1$ and $2$
  infinitely often; otherwise we are done.
  
  Suppose first that $a_i = 1$ for finitely many $i$. It follows that $(a_i)$ eventually takes only the value $2$, so
  $\alpha$ is equivalent to $\sqrt{2} = [2;\overline{2}]$. Therefore, $\lambda(\alpha) = \lambda(\sqrt{2})$. It is
  routine computation to show that $\lambda(\sqrt{2}) = \sqrt{8}$, so $\act(s_\alpha) = \sqrt{8} > \sqrt{5}$; a
  contradiction.

  Assume finally that the sequence $(a_i)$ takes values $1$ and $2$ infinitely often. It follows that the sequence
  $(a_i)$ contains either of the patterns $2,1,1$ or $2,1,2$ infinitely often. Since an odd convergent of a number
  $\beta$ is always strictly less than $\beta$, it follows that $$[2,1,1,a_k,a_{k+1},\ldots] > [2,1,1] = \frac52$$ and
  $$[2,1,2,a_k,a_{k+1},\ldots] > [2,1,2] = \frac83.$$ Thus, $\act(s_\alpha) \geq 5/2 > \sqrt{5}$, which is impossible.
\end{proof}

In general, two numbers having the same Lagrange constant need not be equivalent. Any two numbers having unbounded partial
quotients have Lagrange constant $\infty$, but obviously such numbers are not necessarily equivalent.

As a consequence of Theorem \ref{theor:sqrt5} we have the following.

\begin{corollary}\label{cor:fil_new}
Let $s_{\alpha}$ be a Sturmian word of angle $\alpha$. For every $\delta>0$ there exists an increasing sequence of integers $(m_i)_{i\geq 0}$ such that for every $i$ there is in $s_\alpha$ an abelian power of period $m_i$ and length greater than $(\sqrt{5}-\delta)m_i^2$ (i.e., with exponent greater than $(\sqrt{5}-\delta)m_i$).
\end{corollary}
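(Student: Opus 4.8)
The plan is to derive this directly from Theorem \ref{theor:sqrt5} together with the exact value of the maximum abelian power exponent given in Theorem \ref{the:main1}; there is essentially no new idea required, only an unwinding of the definition of the abelian critical exponent. First I would dispose of the trivial case $\delta \geq \sqrt{5}$: then $(\sqrt{5}-\delta)m^2 \leq 0$ for every $m$, so any nonempty abelian power has length exceeding this bound, and by the proposition recovering the result of Richomme, Saari and Zamboni such abelian powers (of arbitrarily large exponent) occur in $s_\alpha$. So from now on assume $0 < \delta < \sqrt{5}$.

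Next I would recall that, by definition, $\act(s_\alpha) = \limsup_{m\to\infty} k_m/m$, where $k_m$ is the maximum exponent of an abelian power of period $m$ in $s_\alpha$, and that Theorem \ref{theor:sqrt5} gives $\act(s_\alpha) \geq \sqrt{5}$. By the definition of the superior limit, the inequality $k_m/m > \sqrt{5}-\delta$ holds for infinitely many integers $m$; listing these in increasing order yields an increasing sequence $(m_i)_{i\geq 0}$ with $k_{m_i} > (\sqrt{5}-\delta)m_i$ for every $i$. Discarding the finitely many terms with $m_i < 2/(\sqrt{5}-\delta)$, we may further assume $k_{m_i} \geq 2$ for all $i$.

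Finally I would invoke Theorem \ref{the:main1}. For each $m_i$ we have $k_{m_i} = \lfloor 1/\|m_i\alpha\|\rfloor \geq 2$, hence $\|m_i\alpha\| < 1/k_{m_i}$, so Theorem \ref{the:main1} guarantees that $s_\alpha$ contains an abelian power of period $m_i$ and exponent $k_{m_i}$. This abelian power has length $m_i k_{m_i} > m_i\,(\sqrt{5}-\delta)m_i = (\sqrt{5}-\delta)m_i^2$, which is precisely the assertion; the parenthetical remark about the exponent being greater than $(\sqrt{5}-\delta)m_i$ is just the same inequality divided by $m_i$. The only steps demanding any care are the trivial case $\delta \geq \sqrt{5}$ and the truncation ensuring $k_{m_i} \geq 2$ so that we are in the regime where Theorem \ref{the:main1} applies; there is no substantive obstacle beyond this bookkeeping.
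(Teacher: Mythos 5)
Your proof is correct and follows exactly the route the paper intends: the corollary is stated as an immediate consequence of Theorem \ref{theor:sqrt5} via the definition of $\act(s_\alpha)$ as $\limsup k_m/m$, which is precisely your unwinding. The extra bookkeeping (the case $\delta \geq \sqrt{5}$ and the truncation to ensure $k_{m_i} \geq 2$) is harmless but not really needed, since $\|m\alpha\| < 1/2$ for all $m$ when $\alpha$ is irrational, so $k_m = \lfloor 1/\|m\alpha\|\rfloor \geq 2$ automatically and an abelian power of period $m$ and exponent $k_m$ exists by the very definition of $k_m$.
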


Notice that the result of the previous corollary about abelian powers is in sharp contrast with the analogous situation
for ordinary repetitions. Indeed, it is known (see \cite{MignosiPirillo}) that there exist Sturmian words that are
$\beta$-power-free for any real number $\beta \geq 2+\phi$, so with respect to both the length and the exponent the
difference with the abelian setting is of one order of magnitude.

\begin{remark}
  By Proposition \ref{prp:act_formula} it is in principle possible to explicitly compute the abelian critical exponent
  for a given Sturmian word. This is especially true if the angle $\alpha$ is a quadratic irrational, as then the
  continued fraction expansion of $\alpha$ is ultimately periodic
  \cite[Chap.~X, Theorem~176, Theorem~177]{Hardy_and_Wright}. Observe that this implies that the partial quotients of
  the expansion are bounded, so by Theorem \ref{theor:act_finite} the abelian critical exponent is finite in this case.
  Notice that if a Sturmian word is a fixed point of a substitution, then its angle is a quadratic irrational called a
  Sturm number \cite{yasutomi}.

  Suppose for an example that $\alpha = [0;\overline{2,1}]$. It is routine to compute that
  $[0;\overline{2,1}] = (-1+\sqrt{3})/2$ and $[0;\overline{1,2}] = -1+\sqrt{3}$. Combining the fact that odd
  convergents of $[0;\overline{1,2}]$ approximate $[0;\overline{1,2}]$ from below with the fact that
  $[2;\overline{1,2}] > [1;\overline{2,1}] + 1$, it follows from Proposition \ref{prp:act_formula} that
  $A(s_\alpha) = [2;\overline{1,2}] + [0;\overline{1,2}] = 2\sqrt{3} \approx 3.46$ for a Sturmian word $s_\alpha$ of
  angle $\alpha$.
\end{remark}

\section{Abelian Repetitions in the Fibonacci Word}\label{sec:abrepFibo}

In this section we apply our results to the Fibonacci word and study in detail its abelian powers and
repetitions. We begin with the following simple observation.

\begin{proposition}\label{pro:max}
  The maximum exponent of an abelian power of period $F_{j}$, $j>0$, in the Fibonacci word is equal to
  $\lfloor \phi F_j+F_{j-1} \rfloor$.
\end{proposition}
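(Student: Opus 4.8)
The plan is to reduce the statement to Theorem~\ref{the:main1}, which, taking $\alpha = \phi - 1$ and $m = F_j$, asserts that the maximum exponent of an abelian power of period $F_j$ in the Fibonacci word equals $\lfloor 1/\|F_j(\phi-1)\|\rfloor$. It thus suffices to prove the clean identity $1/\|F_j(\phi-1)\| = \phi F_j + F_{j-1}$; in fact I will show the two facts $\|F_j(\phi-1)\| = \phi^{-(j+1)}$ and $\phi^{j+1} = \phi F_j + F_{j-1}$, so that taking the floor yields exactly the claimed value $\lfloor \phi F_j + F_{j-1}\rfloor$.

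First I would record two companion power identities. Writing $\psi = 1 - \phi = -1/\phi$ for the algebraic conjugate of $\phi$ (both roots of $x^2 = x+1$), I claim that $\phi^n = \phi F_{n-1} + F_{n-2}$ and $\psi^n = \psi F_{n-1} + F_{n-2}$ for all $n \geq 2$, with the paper's indexing $F_0 = F_1 = 1$. Each follows by an immediate induction: the base case $n = 2$ reads $\phi^2 = \phi + 1 = \phi F_1 + F_0$ (and likewise for $\psi$), and the inductive step multiplies the identity for $n$ by the relevant root and replaces its square via $x^2 = x+1$, using $F_n = F_{n-1} + F_{n-2}$. Specialising to $n = j+1$ (legitimate since $j \geq 1$) gives $\phi^{j+1} = \phi F_j + F_{j-1}$, which already identifies the quantity in the Proposition with $\phi^{j+1}$.

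Next I would compute $\|F_j(\phi-1)\|$. Since $\phi - 1 = -\psi$, the $\psi$-identity with $n = j+1$ gives $F_j(\phi-1) = -\psi F_j = F_{j-1} - \psi^{j+1}$. For $j \geq 1$ we have $|\psi^{j+1}| = \phi^{-(j+1)} \leq \phi^{-2} = 2 - \phi < 1/2$, so $F_{j-1}$ is the integer nearest to $F_j(\phi-1)$ and hence $\|F_j(\phi-1)\| = |\psi|^{j+1} = \phi^{-(j+1)}$. Therefore $1/\|F_j(\phi-1)\| = \phi^{j+1} = \phi F_j + F_{j-1}$, and Theorem~\ref{the:main1} gives $k_{F_j} = \lfloor \phi^{j+1}\rfloor = \lfloor \phi F_j + F_{j-1}\rfloor$, as desired.

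There is essentially no obstacle here: the only care needed is bookkeeping with the shifted Fibonacci convention $F_0 = F_1 = 1$ and verifying the inequality $\phi^{-(j+1)} < 1/2$ over the whole range $j \geq 1$, so that the nearest-integer function behaves as stated — for instance, for $j = 1$ the integer nearest to $F_1(\phi-1) = \phi - 1 \approx 0.618$ is $1 = F_0$ (not $0$), and $\|\phi - 1\| = 2 - \phi = \phi^{-2}$, matching the formula.
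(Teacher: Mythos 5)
Your proof is correct, and its overall skeleton is the same as the paper's: both reduce the statement to Theorem~\ref{the:main1} and then establish the identity $\|F_j(\phi-1)\| = 1/(\phi F_j + F_{j-1})$. The only genuine difference is how that identity is obtained. The paper gets it in one line from Equation~\eqref{eq:difference}, the general convergent-difference formula $\alpha - n_i/m_i = (-1)^i/\bigl(m_i(\alpha_{i+1}m_i+m_{i-1})\bigr)$, using that the Fibonacci numbers are the denominators of the convergents to $\phi-1$ and that $\alpha_{i+1}=\phi$ for all $i$. You instead derive it from scratch via the conjugate-root identities $\phi^{n} = \phi F_{n-1}+F_{n-2}$ and $\psi^{n} = \psi F_{n-1}+F_{n-2}$ with $\psi = 1-\phi$, which give $F_j(\phi-1) = F_{j-1}-\psi^{j+1}$ and hence $\|F_j(\phi-1)\| = \phi^{-(j+1)} = 1/(\phi F_j+F_{j-1})$ once you check $\phi^{-(j+1)}<1/2$. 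Your route is more self-contained and elementary (no continued-fraction machinery, and it incidentally exhibits the exact value $k_{F_j}=\lfloor\phi^{j+1}\rfloor$), whereas the paper's is shorter given that Section~\ref{sec:approx} has already set up the convergent apparatus, which is reused elsewhere (e.g.\ in Lemma~\ref{lem:ratio} and Proposition~\ref{pro:maximal_exponent}). All the bookkeeping you flag (the convention $F_0=F_1=1$, the bound $\phi^{-2}=2-\phi<1/2$ guaranteeing the nearest integer is $F_{j-1}$) checks out.
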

\begin{proof}
  The sequence of the denominators of the convergents of $\phi-1$ coincide with the sequence of Fibonacci numbers.
  Therefore, it is an immediate consequence of Equation \eqref{eq:difference} that
  $$\|F_{j}(\phi-1)\|= \frac{1}{\phi F_{j}+F_{j-1}}.$$ The claim follows now from Theorem \ref{the:main1}.
\end{proof}

Next we turn our attention to the prefixes of the Fibonacci word which are abelian repetitions. Consider such a prefix
of abelian period $m$. The head of the abelian decomposition of this prefix has length at most $m-1$. Therefore, in
order to find the longest abelian repetition of period $m$ that is a prefix, we have to check the maximum length of a
compatible head of all the abelian powers that start at positions $i$ such that $0 \leq i \leq m-1$.

\begin{proposition}\label{pro:maximal_exponent}
  Let $j>1$. In the Fibonacci word $f$, the longest abelian power of period $F_j$ starting at a position $i< F_j$
  has an occurrence starting at position $F_j-1$ and has exponent $$\lfloor \phi F_j+F_{j-1} \rfloor -1=
  \begin{cases}
   F_{j+1}+F_{j-1} -1 &  \mbox{ if $j$ is even;}\\
   F_{j+1}+F_{j-1} -2 &  \mbox{ if $j$ is odd.}
  \end{cases}
  $$
\end{proposition}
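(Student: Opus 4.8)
The plan is to exploit the structure given by Corollary \ref{cor:gab2} together with the special arithmetic of $\alpha = \phi-1$, whose continued-fraction convergents have denominators $F_j$. Since $m = F_j$ is a denominator of a convergent, Proposition \ref{pro:main} and Proposition \ref{prop:unique} apply; in particular the points $\{-iF_j(\phi-1)\}$ for small $i$ are very close to $0$ (or to $1$, depending on parity of $j$), because $\|F_j(\phi-1)\| = 1/(\phi F_j + F_{j-1})$ is tiny and the consecutive multiples $\{-iF_j(\phi-1)\}$ march away from $0$ in steps of $\{F_j(\phi-1)\}$ (or $\{-F_j(\phi-1)\}$) by Lemma \ref{lem:ordered}. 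So among the positions $i$ with $0 \le i < F_j$, the point $\{\rho + i(\phi-1)\}$ that gives the largest abelian power of period $F_j$ is the one closest to the "good" endpoint of the relevant subinterval, and I would identify that this optimal point is realized at position $i = F_j - 1$.

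First I would make the parity distinction explicit: $\{F_j(\phi-1)\} < 1/2$ exactly when $j$ is even (equivalently $\{-F_j(\phi-1)\}$ is the small quantity when $j$ is odd), using $\phi-1 = [0;\overline{1}]$ and the sign pattern in Equation \eqref{eq:difference}. In the case $\{F_j(\phi-1)\} < 1/2$, Corollary \ref{cor:gab2}(i) says $k_{F_j,n} = \max(A,B)$ with $A = \lfloor \{-\rho-n(\phi-1)\}/\{F_j(\phi-1)\} \rfloor$; to maximize this over $n$ in a window of length $F_j$ I want $\{\rho+n(\phi-1)\}$ as small as possible among those positions, i.e.\ $\{-\rho-n(\phi-1)\}$ as close to $1$ as possible. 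Then I would show that the prefix constraint — the abelian power must be preceded by a compatible head of length $< F_j$, i.e.\ start at some $i < F_j$ — combined with the ordering of the points $\{(\phi-1)\},\{2(\phi-1)\},\dots$ among the first $F_j$ values, forces the relevant minimal point to be attained at $i = F_j - 1$, and that the corresponding exponent is exactly $k_{F_j} - 1 = \lfloor \phi F_j + F_{j-1}\rfloor - 1$ rather than the global maximum $k_{F_j}$, because the change-of-coding / head-compatibility costs exactly one unit (this is precisely the phenomenon captured in the proof of Proposition \ref{prop:mec} and Corollary \ref{cor:gab2}(iv) with $\gamma$).

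For the closed form I would invoke Proposition \ref{pro:max}: $\lfloor \phi F_j + F_{j-1}\rfloor$ equals $\phi F_j + F_{j-1}$ rounded down, and since $\phi F_j = F_{j+1} + (\phi-1)F_j - F_j \cdot 0$... more cleanly, using $\phi F_j = F_{j+1} + \phi^{-1}(-1)^{?}$-type identities: one has $\phi F_j + F_{j-1} = F_{j+1} + F_{j-1} + (\phi - 1)F_j - F_j$... I would instead just use the known identity $\phi F_j = F_{j+1} - (-\phi)^{-j}$ (equivalently $F_{j+1} - \phi F_j = (-1)^j(\phi-1)^j$ is exponentially small with sign $(-1)^j$) to conclude $\lfloor \phi F_j + F_{j-1}\rfloor = F_{j+1} + F_{j-1}$ when the fractional correction pushes down (i.e.\ $j$ even) and $F_{j+1} + F_{j-1} - 1$ when it pushes the sum just below an integer (i.e.\ $j$ odd); subtracting $1$ then yields the two displayed values $F_{j+1}+F_{j-1}-1$ ($j$ even) and $F_{j+1}+F_{j-1}-2$ ($j$ odd).

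The main obstacle, I expect, is the bookkeeping at the exceptional points: positions $i$ with $\{\rho+i(\phi-1)\} \in \{\{-rF_j(\phi-1)\} : r \ge 0\}$ fall under cases (ii)–(iv) of Corollary \ref{cor:gab2}, and position $i = F_j - 1$ in the characteristic Fibonacci word is likely to be exactly such an exceptional point (it should correspond to $\{\rho + (F_j-1)(\phi-1)\}$ being one of the $\{-rF_j(\phi-1)\}$), so I must carefully track whether $0 \in I_{\sa b}$ for the Fibonacci word and hence whether $\gamma = 1$ or $0$, and verify that this is exactly what turns $k_{F_j}$ into $k_{F_j}-1$ and not $k_{F_j}-2$. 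I would handle the two parities separately and, if convenient, verify the claim for the smallest few $j$ by direct inspection to pin down the coding convention, then give the general argument via Corollary \ref{cor:gab2} and the monotonicity from Lemma \ref{lem:ordered}. The identification that the optimal starting position in the window $[0, F_j)$ is precisely $F_j - 1$ (and not merely "some position in the window") is the other point requiring care, but it follows because the $F_j$ points $\{\rho + i(\phi-1)\}$, $0 \le i < F_j$, are essentially equidistributed and the one landing closest to the critical endpoint is the one whose index differs from $0$ by the convergent denominator, i.e.\ $i = F_j - 1$ after accounting for the initial point $\rho = \phi - 1$.
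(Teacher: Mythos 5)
Your overall plan coincides with the paper's proof: the rotation point coded at position $F_j-1$ of $f$ is $\{\rho+(F_j-1)(\phi-1)\}=\{F_j(\phi-1)\}$, which by Theorem~\ref{theor:lang} and Corollary~\ref{cor:cor} is the point nearest the relevant endpoint among all positions in the window $[0,F_j)$; the exponent there is read off from Theorem~\ref{the:maingab}, and the closed form follows from Equation~\eqref{eq:difference} exactly as you compute it. (The paper phrases this in $s_{\phi-1,0}$ at position $F_j$, which is the same rotation point.) Note, though, that ``essentially equidistributed'' is not the argument you need for optimality of $F_j-1$: what you need, and what the paper uses, is precisely Corollary~\ref{cor:cor}, together with the observation that for, say, $j$ even the branch $B=\lfloor \{\rho+n\alpha\}/\{-F_j\alpha\}\rfloor$ is at most $1$ for every $n$ (since $\{-F_j\alpha\}>1/2$), so no position can win via the other inequality of Theorem~\ref{the:maingab}.

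The genuine flaw is your explanation of the ``$-1$'', and the obstacle you single out as the main difficulty does not exist. You predict that position $F_j-1$ is an exceptional point of the form $\{-rF_j(\phi-1)\}$ and that the loss of one unit of exponent is the change-of-coding cost $\gamma$ of Corollary~\ref{cor:gab2}\emph{(iv)}. Neither is true: for the Fibonacci word $\{\rho+n\alpha\}=\{(n+1)(\phi-1)\}$, and an equality $\{(n+1)(\phi-1)\}=\{-rm(\phi-1)\}$ would force $(n+1+rm)(\phi-1)\in\mathbb{Z}$, which is impossible since $\phi-1$ is irrational and $n+1+rm\geq 1$. Hence \emph{no} position of $f$ is ever exceptional for any period, case \emph{(i)} of Corollary~\ref{cor:gab2} always applies, and the $\gamma$-bookkeeping is vacuous here. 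The ``$-1$'' is purely metric: writing $x=\{F_j(\phi-1)\}=\|F_j(\phi-1)\|=1/(\phi F_j+F_{j-1})$ for $j$ even, the condition of Theorem~\ref{the:maingab}\emph{(i)} at this point reads $x<1-kx$, i.e.\ $(k+1)x<1$, giving $k=\lfloor 1/x\rfloor-1=\lfloor \phi F_j+F_{j-1}\rfloor-1$ --- one less than the global maximum $k_{F_j}=\lfloor 1/x\rfloor$ simply because the best starting point available in the window lies exactly one step $\|F_j(\phi-1)\|$ from the endpoint of the interval, not because of any ambiguity in the coding of $0$. If you carry out your plan through case \emph{(i)} you land on the correct answer; as written, the key step is justified by a mechanism that never triggers.
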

\begin{proof}
For simplicity we consider the word $s_{\phi-1,0}$ and show that in $s_{\phi-1,0}$ the longest abelian power having period $F_j$ starting at a position $i$ such that $i\leq F_j$ has an occurrence starting at position $F_j$ and has the claimed exponent. By Theorem \ref{the:maingab}, an abelian power with period $F_{j}$ starting at position $F_{j}$ in $s_{\phi-1,0}$ has exponent $k$ if and only if $$\{F_{j}(\phi-1)\}<1-k\{F_{j}(\phi-1)\} \ \mbox{ or } \ \{F_{j}(\phi-1)\}>k\{-F_{j}(\phi-1)\}.$$ 

Suppose that the first case holds (the other case is analogous); then $j$ is even. We hence have
\begin{equation}\label{eq:uno}
 \{F_{j}(\phi-1)\}<\frac{1}{k+1}.
\end{equation}
As in the proof of Proposition \ref{pro:max}, we derive from Equation \eqref{eq:difference} that
$$\{F_{j}(\phi-1)\}=\frac{1}{\phi F_{j}+F_{j-1}}.$$
Therefore, the largest integer $k$ for which \eqref{eq:uno} holds is $\lfloor \phi F_j+F_{j-1} \rfloor -1$.

Then, the abelian power of period $F_{j}$ starting at position $F_{j}-1$ in the Fibonacci word has exponent $\lfloor \phi F_j+F_{j-1} \rfloor -1$. By Corollary \ref{cor:cor} any abelian power starting at a position $i$ such that $i<F_{j}-1$ has a smaller exponent, so the proof is complete if we derive the formula of the statement. By Equation \eqref{eq:difference} 
\begin{equation*}
  \phi F_j - F_{j+1} = \frac{(-1)^j}{\phi F_j + F_{j-1}},
\end{equation*}
so we obtain that
\begin{equation*}
  \lfloor \phi F_j \rfloor = F_{j+1} + 
  \begin{cases}
    0 & \text{ if $j$ is even;}\\
    -1 & \text{ if $j$ is odd.}
  \end{cases}
\end{equation*}
This gives the formula of the statement.
\end{proof}

The following theorem provides a formula for computing the length of the longest abelian repetition occurring as a prefix in the Fibonacci word.

\begin{theorem}\label{pro:longest_prefix}
Let $j>1$. The longest prefix of the Fibonacci word that is an abelian repetition of period $F_j$ has length 
$$\lp(F_{j}) =
\begin{cases}
 F_j( F_{j+1}+F_{j-1} +1)-2 &  \mbox{ if $j$ is even;}\\
 F_j( F_{j+1}+F_{j-1} )-2   &  \mbox{ if $j$ is odd.}
\end{cases}
$$
\end{theorem}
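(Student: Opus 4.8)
The plan is to combine Proposition~\ref{pro:maximal_exponent}, which identifies the longest abelian \emph{power} of period $F_j$ occurring at a position $i<F_j$ (it starts at position $F_j-1$ and has exponent $\lfloor\phi F_j+F_{j-1}\rfloor-1$), with Proposition~\ref{prop:mec}, which says that any abelian power of period $F_j$ starting at a position $n\geq F_j-1$ can be extended to an abelian repetition of period $F_j$ with maximal head \emph{and} maximal tail, both of length $F_j-1$. First I would argue that the longest abelian repetition of period $F_j$ that is a prefix of the Fibonacci word is obtained precisely by extending the occurrence from Proposition~\ref{pro:maximal_exponent}: the head consumes at most $F_j-1$ letters, so a prefix abelian repetition of period $F_j$ must contain an abelian power of period $F_j$ starting at some position $i$ with $0\leq i\leq F_j-1$; by Corollary~\ref{cor:cor} (monotonicity of $\|m\alpha\|$ along convergents) and Proposition~\ref{pro:maximal_exponent} the maximal such power is the one at position $F_j-1$, and by Proposition~\ref{prop:mec} it extends to the left by exactly $F_j-1$ letters (filling the head) and to the right by $F_j-1$ letters (the tail), with no possibility of doing better on either side.

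Once this structure is fixed, the length computation is essentially bookkeeping. Write $k=\lfloor\phi F_j+F_{j-1}\rfloor-1$ for the exponent of that longest power, so the power itself has length $kF_j$. Prepending the head of length $F_j-1$ (the occurrence starts at position $F_j-1$, so positions $0,\dots,F_j-2$ form the head) and appending the maximal tail of length $F_j-1$, the total length is
\begin{equation*}
  \lp(F_j) = (F_j-1) + kF_j + (F_j-1) = (k+2)F_j - 2.
\end{equation*}
Now substitute the value of $k$. From Proposition~\ref{pro:maximal_exponent} we have, using $\lfloor\phi F_j\rfloor=F_{j+1}$ if $j$ is even and $F_{j+1}-1$ if $j$ is odd (derived there from Equation~\eqref{eq:difference}),
\begin{equation*}
  k+2 = \lfloor\phi F_j+F_{j-1}\rfloor+1 =
  \begin{cases}
    F_{j+1}+F_{j-1}+1 & \text{if $j$ is even,}\\
    F_{j+1}+F_{j-1} & \text{if $j$ is odd,}
  \end{cases}
\end{equation*}
which plugged into $\lp(F_j)=(k+2)F_j-2$ yields exactly the two cases in the statement.

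The only genuinely delicate point—the main obstacle—is justifying that the optimal prefix abelian repetition really does come from the \emph{power} at position $F_j-1$ and that both the head and the tail achieve their full length $F_j-1$ simultaneously. For the head this is automatic since positions $0,\dots,F_j-2$ are available and the Parikh-vector containment needed for a head is guaranteed by Proposition~\ref{prop:mec}; for the tail one must invoke that $F_j$ is a denominator of a convergent to $\phi-1$ so that Proposition~\ref{prop:unique} applies (there is a unique factor of length $F_j$ of the minority Parikh type, and it begins and ends with the same fixed letter), which is exactly the input to Proposition~\ref{prop:mec}. One should also check the boundary case behavior implicit in Theorem~\ref{the:maingab}: the position $F_j-1$ corresponds, under $s_{\phi-1,0}$, to a point of the form $\{-rF_j\alpha\}$, so in principle one of the exceptional cases \emph{(ii)}–\emph{(iv)} of Theorem~\ref{the:maingab} could intervene; but Proposition~\ref{pro:maximal_exponent} has already resolved this by working with $s_{\phi-1,0}$ and the explicit inequality \eqref{eq:uno}, so here I would simply cite it. I expect no further surprises beyond carefully matching the parity cases.
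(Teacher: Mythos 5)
Your proposal is correct and follows essentially the same route as the paper: take the maximal abelian power of period $F_j$ at position $F_j-1$ from Proposition~\ref{pro:maximal_exponent}, extend it via Proposition~\ref{prop:mec} to an abelian repetition with head and tail of length $F_j-1$, and compute $(k+2)F_j-2$ with $k=\lfloor\phi F_j+F_{j-1}\rfloor-1$. Your extra care about why no other starting position $i<F_j$ can do better and about the boundary cases of Theorem~\ref{the:maingab} is sound and in fact slightly more explicit than the paper's own two-line argument.
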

\begin{proof}
Let $w$ be the abelian power in $f$ of period $F_j$ having maximum exponent described in Proposition
\ref{pro:maximal_exponent}. By Proposition \ref{prop:mec} this occurrence of $w$ can be extended to an abelian
repetition with maximum head and tail length. The claim thus follows from the formula of Proposition
\ref{pro:maximal_exponent}.
\end{proof}

\begin{proposition}\label{corr:last}
Let $j>1$ and $k_{j}$ be the maximum exponent of a prefix of the Fibonacci word that is an abelian repetition of period $F_{j}$. Then $$\lim_{j\to \infty}\frac{k_{j}}{F_{j}}=\sqrt{5}.$$
\end{proposition}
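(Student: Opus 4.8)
The plan is to read the answer off directly from Theorem~\ref{pro:longest_prefix}. By definition, an abelian repetition of period $F_j$ and length $\ell$ has exponent $\ell/F_j$, so the longest prefix of the Fibonacci word that is an abelian repetition of period $F_j$ has exponent $k_j=\lp(F_j)/F_j$. Substituting the formula of Theorem~\ref{pro:longest_prefix}, I would write
\begin{equation*}
  k_j=\frac{\lp(F_j)}{F_j}=
  \begin{cases}
    F_{j+1}+F_{j-1}+1-\dfrac{2}{F_j} & \text{if $j$ is even,}\\
    F_{j+1}+F_{j-1}-\dfrac{2}{F_j} & \text{if $j$ is odd,}
  \end{cases}
\end{equation*}
and then divide through by $F_j$, obtaining $k_j/F_j=\dfrac{F_{j+1}}{F_j}+\dfrac{F_{j-1}}{F_j}+\varepsilon_j$, where $\varepsilon_j$ collects the terms $\pm1/F_j$ and $-2/F_j^2$, which tend to $0$.

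The remaining step is the elementary limit. As recalled in Section~\ref{sec:approx}, the fractions $\left(F_{j+1}/F_j\right)_{j\ge0}$ and $0,\left(F_j/F_{j+1}\right)_{j\ge0}$ are the sequences of convergents to $\phi$ and to $\phi-1$ respectively, so $F_{j+1}/F_j\to\phi$ and $F_{j-1}/F_j\to\phi-1$ as $j\to\infty$. Hence $k_j/F_j\to\phi+(\phi-1)=2\phi-1=\sqrt5$, using $\phi=(1+\sqrt5)/2$, which is the claim.

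There is essentially no obstacle: the substance of the proposition is already contained in Theorem~\ref{pro:longest_prefix}, and what is left is a routine limit of ratios of Fibonacci numbers. A slightly more conceptual alternative is to combine the corollary following Proposition~\ref{prop:mec} with Proposition~\ref{pro:max} to get $k_j=\lfloor\phi F_j+F_{j-1}\rfloor+1-2/F_j$, whence $k_j/F_j=\big(\phi+F_{j-1}/F_j\big)+o(1)\to\phi+(\phi-1)=\sqrt5$; this route makes transparent the link with $\act(f)=\lambda(\phi-1)=\sqrt5$ from Theorem~\ref{theor:sqrt5}, the Fibonacci numbers being exactly the denominators of convergents to $\phi-1$ that realize the $\limsup$ defining $\lambda(\phi-1)$.
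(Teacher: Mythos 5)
Your proof is correct and follows essentially the same route as the paper: both rest on the explicit formula of Theorem~\ref{pro:longest_prefix} together with the limits $F_{j+1}/F_j\to\phi$ and $F_{j-1}/F_j\to\phi-1$, giving $(F_{j+1}+F_{j-1})/F_j\to\sqrt5$. The only difference is cosmetic: the paper first invokes Theorem~\ref{theor:sqrt5} to bound the limit above by $\sqrt5$ before computing, whereas you read both bounds directly off the exact value of $\lp(F_j)/F_j$, which is slightly more self-contained.
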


\begin{proof} 
We proved in Theorem \ref{theor:sqrt5} that over a bigger set of repetitions the superior limit tends to $\sqrt{5}$.
Therefore, the limit must be smaller than or equal to $\sqrt{5}$. The equality follows from the fact that the
sequence $(F_{j+1}+F_{j-1})/F_j$ converges to $\sqrt{5}$. Indeed, since the sequence $F_{j+1}/F_j$ converges to $\phi$
and the sequence $F_{j-1}/F_j$ converges to $\phi-1$, the sequence $$\frac{F_{j+1}+F_{j-1}}{F_j}$$ converges to
$\phi + \phi-1= \sqrt{5}.$ 
\end{proof}

\begin{figure}[ht!]
\centering
\fbox{
\includegraphics[scale=0.8]{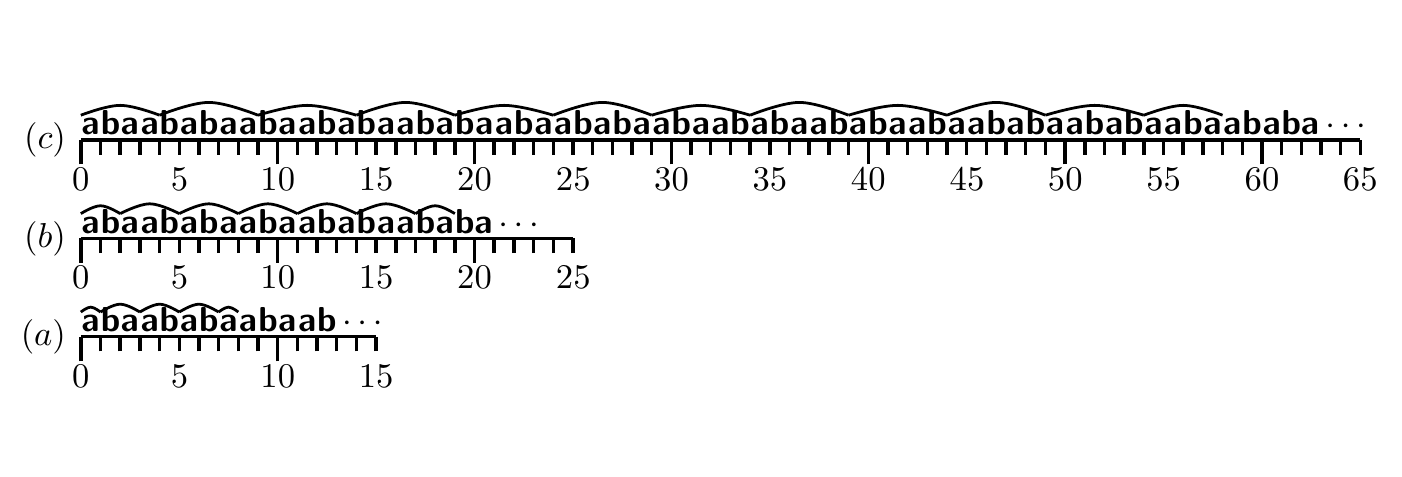} 
}\caption{Longest abelian repetition of period $m$ that is a  prefix of the  Fibonacci word for $m=2,3,5$. 
$(a)$ For $m=2$, the longest abelian repetition has length $8=1+3m+1$. 
$(b)$ For $m=3$, the longest abelian repetition has length $19=2+5m+2$.
$(c)$ For $m=5$, the longest abelian repetition has length $58=4+10m+4$. }
\label{Fig:prefixlenghts}
\end{figure}

In \figurename~\ref{Fig:prefixlenghts} we give a graphical representation of the longest prefix of the Fibonacci word that is an abelian repetition of period $m$ for $m=2,3,5$. In Table \ref{tab:fici} we give the length $\lp(F_{j})$ of the longest prefix of the Fibonacci word that is an abelian repetition of period $F_{j}$, for $j=2,\ldots, 11$, computed using the formula of Theorem \ref{pro:longest_prefix}. We also give the distance between $\sqrt{5}$ and the ratio between the maximum exponent $\lp(F_{j})/F_{j}$ of a prefix of the Fibonacci word having abelian period $F_{j}$ and the number $F_{j}$.

\begin{table}
\centering  
\begin{small}
\begin{raggedright}
\begin{tabular}{c *{30}{@{\hspace{3.1mm}}l}}
$j$\hspace{2mm} & 2& 3& 4& 5& 6& 7& 8& 9& 10& 11 
\\
\hline \\
$F_{j}$\hspace{2mm} & 2& 3& 5& 8& 13& 21& 34& 55& 89& 144 
\\
\hline \\
$\lp(F_{j})$ \hspace{2mm}   & 8 & 19  & 58  & 142  & 388  & 985  & 2616\hspace{1ex} & 6763  & 17798 & 46366 
\\
 \hline\\
$|\sqrt{5}-k_{j}/F_{j}|\times 10^2$ \hspace{2mm}   & $23.6 $ & $12.5 $ & $8.393 $ & $1.732 $ & $5.98 $ & $0.25 $ & $2.69 $ & $0.037 $ & $1.087$ & $0.005 $ 
\\
\hline \rule[0pt]{0pt}{12pt}
\end{tabular}
\end{raggedright}\caption{\label{tab:fici}The length of the longest prefix ($\lp(F_{j})$) of the Fibonacci word having abelian period $F_{j}$ for $j=2,\ldots, 11$.  The table also reports rounded distances (multiplied by $10^2$) between $\sqrt{5}$ and the ratio between the exponent $\lp(F_{j})/F_{j}$ of the longest prefix of the Fibonacci word having abelian period $F_{j}$ and the number $F_{j}$ (see Proposition~\ref{corr:last}).}
\end{small}
\end{table}

Next we extend a classical result on the periods of the factors of the Fibonacci word to the abelian setting. Currie
and Saari proved the following \cite{CuSa09}.

\begin{proposition}
 The minimum period of any factor of the Fibonacci infinite word is a Fibonacci number.
\end{proposition}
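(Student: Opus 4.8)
The plan is to read off periods directly from the Sturmian bijection and then reduce the minimality question to the theory of best rational approximations already recorded in Corollary~\ref{cor:cor}. Fix a factor $w=a_n\cdots a_{n+\ell-1}$ of $f=s_{\phi-1}$ and an integer $p$ with $1\le p<\ell$. By definition $p$ is a period of $w$ exactly when the two factors of length $\ell-p$ occurring at positions $n$ and $n+p$ coincide. By Proposition~\ref{prima} the factor of length $m:=\ell-p$ occurring at position $q$ is determined solely by which cell of the partition $L_0(m),\dots,L_m(m)$ contains $\{\rho+q\alpha\}$. Hence $p$ is a period of $w$ if and only if $\{\rho+n\alpha\}$ and $\{\rho+(n+p)\alpha\}$ lie in the same cell $L_i(\ell-p)$, and the least period of $w$ equals $\ell-m$ for the largest $m=\ell-p$ (the \emph{longest border}) for which these two points still fall in a common cell of the length-$m$ partition.

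The second step is to turn this into an approximation statement. The points $\{\rho+n\alpha\}$ and $\{\rho+(n+p)\alpha\}$ differ by the rotation $p\alpha$, so they share a cell precisely when the arc of length $\|p\alpha\|$ swept out by this rotation contains none of the $\ell-p+1$ partition points $\{-j\alpha\}$, $0\le j\le \ell-p$ (the cells being exactly the maximal arcs free of these points). Thus a small value of $\|p\alpha\|$ favours $p$ being a period, and one expects the period-minimizing $p$ to be a value making $\|p\alpha\|$ exceptionally small. This is exactly what Corollary~\ref{cor:cor} controls: below any bound, the quantity $\|m\alpha\|$ is minimized precisely at the denominators of the convergents to $\alpha$, which for $\alpha=\phi-1$ are the Fibonacci numbers. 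I would therefore take $p$ to be the least period and show that if $p$ were not a convergent denominator, then replacing it by the largest convergent denominator below it would yield a strictly longer border, contradicting minimality.

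I expect the main obstacle to lie exactly in making this replacement rigorous, because lowering $p$ is not a monotone operation: it simultaneously shrinks the swept arc (good) and enlarges the length-$(\ell-p)$ partition, introducing more forbidden points $\{-j\alpha\}$ that the arc must avoid (bad). Moreover, for $\alpha=\phi-1$ the sign of $\{F_k\alpha\}-\tfrac12$ alternates with $k$, so the swept arc sits on the opposite side of the boundary for a non-convergent $p$ and for the neighbouring convergent denominator; a naive ``smaller forbidden set'' argument therefore fails, and the heart of the proof is a three-distance-theorem analysis of the gaps in the orbit $\{-j\alpha\}$, organized around Corollary~\ref{cor:cor}, pinning the border-maximizing shift to a convergent denominator. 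An alternative route, whose crux turns out to be the same arithmetic subtlety in a different guise, is to use that $f$ is the fixed point of the substitution $\sigma$ given by $\sa{a}\mapsto\sa{ab}$, $\sa{b}\mapsto\sa{a}$ (so that $\sigma(f_j)=f_{j+1}$ and $|\sigma(u)|=|u|+|u|_{\sa{a}}$) and to induct on $|w|$ via desubstitution: a least period $F_k$ of the preimage $w'$ transforms into a period $|\sigma(B)|$ of $w$, where $B$ is the length-$F_k$ period block, and the heavy/light dichotomy of Proposition~\ref{pro:main} makes $|\sigma(B)|$ equal to $F_{k+1}$ up to an additive error of one; ruling out that off-by-one, again by minimality, is the key step.
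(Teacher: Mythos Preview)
The paper does not prove this proposition at all: it is stated without proof and attributed to Currie and Saari \cite{CuSa09}, serving only as motivation for the abelian analogue (Theorem~\ref{the:abperFib}) that the paper then proves in detail. There is therefore no in-paper argument to compare your attempt against.

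As for your proposal on its own terms: it is a plan, not a proof, and you say so yourself. The translation into the Sturmian bijection is correct---$p$ is a period of $w=a_n\cdots a_{n+\ell-1}$ iff $\{\rho+n\alpha\}$ and $\{\rho+(n+p)\alpha\}$ lie in the same cell $L_i(\ell-p)$---and your reformulation as an arc-avoidance condition is essentially right (modulo the harmless point that ``same cell'' means \emph{some} arc between the two points is free of partition points, not specifically the short one; once $\ell-p\ge 1$ the long arc always meets $\{0,\{-\alpha\}\}$, so the distinction does not bite here). Your diagnosis of the obstacle is also accurate: passing from a non-convergent $p$ to the nearest $F_k\le p$ simultaneously shortens the swept arc and refines the partition, and these effects pull in opposite directions. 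This is the genuine content of the theorem, and nothing you have written resolves it. The same is true of the desubstitution route: you correctly compute that a period block $B$ of length $F_k$ maps under $\sigma$ to a block of length $F_{k+1}$ or $F_{k+1}\pm 1$ according to the heavy/light type of $B$, and ruling out the $\pm 1$ case by minimality is exactly the step that needs an argument. So both approaches are reasonable starting points, but the proposal leaves its hard step explicitly open; it is a sketch of where the difficulty lies rather than a proof.
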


We prove an analogous result for abelian periods: the minimum abelian period of every factor of the Fibonacci word is
a Fibonacci number (Theorem \ref{the:abperFib}). For the proof we need two lemmas.

\begin{lemma}\label{lem:ratio}
  For all $j > 2$ we have
  \begin{equation*}
    \frac{\|F_{j-1}(\phi-1)\|}{\|F_j(\phi-1)\|} = \phi \ \text{ and } \ \frac{\|F_{j-2}(\phi-1)\|}{\|F_j(\phi-1)\|} = 1 + \phi.
  \end{equation*}
\end{lemma}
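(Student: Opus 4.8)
The plan is to compute both ratios directly from Equation \eqref{eq:difference}, using the fact that the denominators of the convergents to $\phi-1$ are exactly the Fibonacci numbers $F_j$, together with the basic identity $F_{j+1} = F_j + F_{j-1}$ and the defining relation $\phi^2 = \phi + 1$.

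**First I would** record the closed form for $\|F_j(\phi-1)\|$. As already derived in the proof of Proposition \ref{pro:max} (from Equation \eqref{eq:difference} applied to $\alpha = \phi-1$, whose $i$th convergent denominator is $F_{i+1}$ in the paper's indexing, so that the relevant expression simplifies), one has
\begin{equation*}
  \|F_j(\phi-1)\| = \frac{1}{\phi F_j + F_{j-1}}.
\end{equation*}
Thus the first ratio becomes
\begin{equation*}
  \frac{\|F_{j-1}(\phi-1)\|}{\|F_j(\phi-1)\|} = \frac{\phi F_j + F_{j-1}}{\phi F_{j-1} + F_{j-2}}.
\end{equation*}
Now I would use $F_j = F_{j-1} + F_{j-2}$ to write the numerator as $\phi(F_{j-1}+F_{j-2}) + F_{j-1} = (\phi+1)F_{j-1} + \phi F_{j-2}$, and then invoke $\phi+1 = \phi^2$ to get $\phi^2 F_{j-1} + \phi F_{j-2} = \phi(\phi F_{j-1} + F_{j-2})$. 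The denominator cancels, leaving $\phi$ exactly.

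**Next** the second ratio: similarly
\begin{equation*}
  \frac{\|F_{j-2}(\phi-1)\|}{\|F_j(\phi-1)\|} = \frac{\phi F_j + F_{j-1}}{\phi F_{j-2} + F_{j-3}}.
\end{equation*}
One route is to expand the numerator in terms of $F_{j-2}, F_{j-3}$ using $F_{j-1} = F_{j-2}+F_{j-3}$ and $F_j = 2F_{j-2} + F_{j-3}$, obtaining $(2\phi+1)F_{j-2} + (\phi+1)F_{j-3} = (\phi^2 + \phi)F_{j-2} + \phi^2 F_{j-3} = \phi(\phi F_{j-2} + F_{j-3}) + \phi^2 F_{j-2}$; a cleaner route is simply to observe that the second ratio is the product of the two consecutive first ratios, i.e.
\begin{equation*}
  \frac{\|F_{j-2}(\phi-1)\|}{\|F_j(\phi-1)\|} = \frac{\|F_{j-2}(\phi-1)\|}{\|F_{j-1}(\phi-1)\|} \cdot \frac{\|F_{j-1}(\phi-1)\|}{\|F_j(\phi-1)\|} = \phi \cdot \phi = \phi^2 = 1 + \phi,
\end{equation*}
valid for $j - 2 > 2$, and then handle the small remaining case $j = 4$ (and, if needed, a sanity check at $j=3$ for the first identity) by direct evaluation of $\phi F_j + F_{j-1}$ from the formula above.

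**I do not expect a genuine obstacle here** — the statement is a short arithmetic identity. The only thing requiring care is bookkeeping: making sure the closed form $\|F_j(\phi-1)\| = 1/(\phi F_j + F_{j-1})$ is correctly aligned with the paper's Fibonacci indexing ($F_0 = F_1 = 1$) and with which convergent index is in play, and confirming the identity holds down to the smallest allowed $j$ rather than only asymptotically. Once the closed form is in hand, everything reduces to the single substitution $\phi + 1 = \phi^2$ applied after a Fibonacci recurrence step.
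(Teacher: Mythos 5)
Your proposal is correct and follows essentially the same route as the paper: both arguments rest on the closed form $\|F_j(\phi-1)\| = 1/(\phi F_j + F_{j-1})$ coming from Equation \eqref{eq:difference}, the Fibonacci recurrence, and $\phi^2=\phi+1$ (the paper merely packages the first identity as an induction, where you factor the numerator as $\phi(\phi F_{j-1}+F_{j-2})$ directly, and it derives the second identity from the first exactly as your product-of-ratios argument does). The only blemish is the intermediate rewriting $(\phi^2+\phi)F_{j-2}+\phi^2 F_{j-3}=\phi(\phi F_{j-2}+F_{j-3})+\phi^2 F_{j-2}$ in your optional direct expansion of the second ratio, which does not hold in general (the correct factorization is $(1+\phi)(\phi F_{j-2}+F_{j-3})$), but this is immaterial since the product route you actually rely on is sound.
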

\begin{proof}
  It is straightforward to verify that $\|F_1(\phi-1)\|/\|F_2(\phi-1)\| = \phi$. By applying induction and Equation
  \eqref{eq:difference} we obtain that
  \begin{align*}
    \frac{\|F_{j-1}(\phi-1)\|}{\|F_j(\phi-1)\|} &= \frac{\phi F_j + F_{j-1}}{\phi F_{j-1} + F_{j-2}}
    = \frac{\phi F_{j-1} + \phi F_{j-2} + F_{j-2} + F_{j-3}}{\phi F_{j-1} + F_{j-2}} \\
    &= 1 + \frac{\|F_{j-1}(\phi-1)\|}{\|F_{j-2}(\phi-1)\|} = 1 + \frac{1}{\phi} = \phi.
  \end{align*}
  Similarly,
  \begin{equation*}
    \frac{\|F_{j-2}(\phi-1)\|}{\|F_j(\phi-1)\|} = 1 + \frac{\|F_{j-2}(\phi-1)\|}{\|F_{j-1}(\phi-1)\|} = 1 + \phi.
  \end{equation*}
\end{proof}

For the next lemma we need the following corollary of the famous Three Distance Theorem; see, e.g., \cite{AlBe}.

\begin{proposition}\label{prp:tdt}
  Let $\alpha = [0;a_1,a_2,\ldots]$ be an irrational number and $(n_k/m_k)$ be its sequence of convergents. If $k > 1$, then the
  lengths of the $m_k$ subintervals $L_0(m_k-1)$, $L_1(m_k-1)$, $\ldots$, $L_{m_k-1}(m_k-1)$ of the torus take two
  values: $\|m_{k-1}\alpha\|$ or $\|((a_k - 1)m_{k-1} + m_{k-2})\alpha\|$.
\end{proposition}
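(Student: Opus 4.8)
The plan is to derive Proposition~\ref{prp:tdt} from the classical Three Distance (Steinhaus) Theorem; the only real work is to translate the present set-up into its language and then to rewrite the resulting gap lengths in the asserted closed form.

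I would first reduce to a rotation of the point $0$. By construction the endpoints of the intervals $L_0(m_k-1),L_1(m_k-1),\ldots,L_{m_k-1}(m_k-1)$ are exactly the $m_k$ points $0,\{-\alpha\},\{-2\alpha\},\ldots,\{-(m_k-1)\alpha\}$ of the torus $I$, so the multiset of their lengths is the multiset of gap lengths of the finite orbit $\{\{-i\alpha\}\colon 0\le i<m_k\}$. Applying the length-preserving involution $x\mapsto 1-x$ of $I$, this multiset equals the multiset of gap lengths of $\{\{i\alpha\}\colon 0\le i<m_k\}$, so the Three Distance Theorem may be applied to the rotation by $\alpha$ with $N=m_k$ points. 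Since $N=m_k$ is the denominator of a convergent, the relevant decomposition of $N$ in terms of the previous convergent denominators is $m_k=a_km_{k-1}+m_{k-2}$, which is precisely the ``vanishing remainder'' situation in which the Three Distance Theorem produces only \emph{two} gap lengths (see \cite{AlBe}): the short gap $\|m_{k-1}\alpha\|$ and the long gap $\|m_{k-2}\alpha\|-(a_k-1)\|m_{k-1}\alpha\|$.

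It then remains to put the long gap in the asserted form. Equation~\eqref{eq:difference} gives $|m_i\alpha-n_i|^{-1}=\alpha_{i+1}m_i+m_{i-1}$, from which one reads off that $\|m_i\alpha\|=|m_i\alpha-n_i|$ for $i\ge 1$, that $m_{k-1}\alpha-n_{k-1}$ and $m_k\alpha-n_k$ have opposite signs, and the recurrence $\|m_{k-2}\alpha\|=a_k\|m_{k-1}\alpha\|+\|m_k\alpha\|$ (valid for $k\ge 3$; the case $k=2$ is checked separately). The recurrence turns the long gap into $\|m_{k-1}\alpha\|+\|m_k\alpha\|$, and the opposite-sign fact gives $|(m_k-m_{k-1})\alpha-(n_k-n_{k-1})|=\|m_k\alpha\|+\|m_{k-1}\alpha\|$, so that $\|(m_k-m_{k-1})\alpha\|=\|m_k\alpha\|+\|m_{k-1}\alpha\|$ as soon as this sum does not exceed $1/2$ (the finitely many small values of $k$ for which it might can be checked by hand). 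Since $(a_k-1)m_{k-1}+m_{k-2}=a_km_{k-1}+m_{k-2}-m_{k-1}=m_k-m_{k-1}$, the long gap equals $\|((a_k-1)m_{k-1}+m_{k-2})\alpha\|$, which completes the argument.

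The main obstacle is bookkeeping rather than any genuine difficulty: one must make sure that the decomposition $m_k=a_km_{k-1}+m_{k-2}$ really places us in the two-distance rather than the three-distance case of the Three Distance Theorem and that the two values are the stated ones, and then carry out the sign / no-wrap-around argument that collapses $\|m_{k-1}\alpha\|+\|m_k\alpha\|$ into a single term $\|\cdot\,\alpha\|$. Both points are routine once the continued-fraction conventions are fixed.
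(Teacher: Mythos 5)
Your route is the same one the paper takes: Proposition \ref{prp:tdt} is presented there as a corollary of the Three Distance Theorem with a bare citation of \cite{AlBe} and no proof, so the translation you carry out --- reflecting by $x \mapsto 1-x$ to pass from the orbit of $-\alpha$ to that of $\alpha$, observing that $N = m_k = a_k m_{k-1} + m_{k-2}$ is the zero-remainder case in which only two gap lengths occur, and rewriting the long gap --- is exactly the content that is missing from the paper, and for $k \geq 3$ it is correct. In fact for $k \geq 3$ no hand-checking is needed at the final step: your recurrence $\|m_{k-2}\alpha\| = a_k\|m_{k-1}\alpha\| + \|m_k\alpha\|$ gives $\|m_{k-1}\alpha\| + \|m_k\alpha\| \leq \|m_{k-2}\alpha\| \leq 1/2$, so the collapse of the long gap into the single term $\|(m_k - m_{k-1})\alpha\|$ is automatic.

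The one real problem is the step you defer. The $k=2$ case cannot always ``be checked by hand'', because there the asserted conclusion is actually false whenever $a_1 = a_2 = 1$. Take $\alpha = \phi - 1$, so $m_1 = 1$ and $m_2 = 2$: the two intervals $L_0(1), L_1(1)$ have lengths $1-\alpha \approx 0.382$ and $\alpha \approx 0.618$, whereas the proposition predicts the values $\|m_1\alpha\| = \|\alpha\|$ and $\|((a_2-1)m_1 + m_0)\alpha\| = \|\alpha\|$, both equal to $0.382$. The failure occurs precisely at the point you isolated: the long gap equals $\eta_1 + \eta_2 = \{\alpha\}$ with $\eta_i = |m_i\alpha - n_i|$, and when $a_1 = a_2 = 1$ this exceeds $1/2$, so it is not of the form $\|j\alpha\|$ for any $j$. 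Thus your argument establishes the statement for $k \geq 3$, and for $k = 2$ except when $a_1 = a_2 = 1$; what remains is a defect of the statement rather than of your method. The safe formulation writes the long gap as $\|m_{k-1}\alpha\| + \|m_k\alpha\|$ (equivalently as $|((a_k-1)m_{k-1}+m_{k-2})\alpha - ((a_k-1)n_{k-1}+n_{k-2})|$), or simply restricts to $k \geq 3$. Note that the paper's sole application, Lemma \ref{lem:kmi_fibonacci}, invokes the proposition for the Fibonacci word at $j = 2$, i.e.\ exactly in the bad case; its conclusion happens to survive only because of the $\max(1,\cdot)$ in Theorem \ref{theor:kmi}.
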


\begin{lemma}\label{lem:kmi_fibonacci}
  For the Fibonacci word we have $k_{F_j}^{(F_j - 1)} = F_{j+1} + F_{j-1} - 3$ for all $j > 1$.
\end{lemma}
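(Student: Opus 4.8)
The plan is to apply Theorem~\ref{theor:kmi} with $m = F_j$ and anticipation $i = F_j - 1$. By that theorem,
\begin{equation*}
  k_{F_j}^{(F_j-1)} = \max\left(1, \left\lfloor \frac{1 - l_{F_j-1}}{\|F_j(\phi-1)\|} \right\rfloor\right),
\end{equation*}
where $l_{F_j-1}$ is the maximum length of an interval among $L_0(F_j-1), \ldots, L_{F_j-1}(F_j-1)$ in the Sturmian bijection. So the crux is to compute $l_{F_j-1}$ for the Fibonacci angle $\alpha = \phi-1 = [0;\overline 1]$, whose convergent denominators are exactly the Fibonacci numbers $m_k = F_k$. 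First I would invoke Proposition~\ref{prp:tdt} (the Three Distance Theorem corollary) with $m_k = F_j$: the $F_j$ subinterval lengths take only the two values $\|F_{j-1}(\phi-1)\|$ and $\|((a_j-1)F_{j-1} + F_{j-2})(\phi-1)\|$. Since all partial quotients of $\phi-1$ equal $1$, we have $a_j = 1$, so the second value is $\|F_{j-2}(\phi-1)\|$. Hence the two interval lengths are precisely $\|F_{j-1}(\phi-1)\|$ and $\|F_{j-2}(\phi-1)\|$, and since $F_{j-2} < F_{j-1}$ and both are convergent denominators, Corollary~\ref{cor:cor} gives $\|F_{j-2}(\phi-1)\| > \|F_{j-1}(\phi-1)\|$. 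Therefore $l_{F_j-1} = \|F_{j-2}(\phi-1)\|$.

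Next I would substitute this into the formula and use Lemma~\ref{lem:ratio}. Writing $\delta = \|F_j(\phi-1)\|$, we have $l_{F_j-1} = \|F_{j-2}(\phi-1)\| = (1+\phi)\delta$ by Lemma~\ref{lem:ratio}, and also, from the proof of Proposition~\ref{pro:max}, $\delta = 1/(\phi F_j + F_{j-1})$. Thus
\begin{equation*}
  \frac{1 - l_{F_j-1}}{\|F_j(\phi-1)\|} = \frac{1 - (1+\phi)\delta}{\delta} = \frac{1}{\delta} - (1+\phi) = \phi F_j + F_{j-1} - 1 - \phi.
\end{equation*}
It remains to take the floor. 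Using $\lfloor \phi F_j \rfloor = F_{j+1}$ if $j$ is even and $F_{j+1} - 1$ if $j$ is odd (as derived in the proof of Proposition~\ref{pro:maximal_exponent} via Equation~\eqref{eq:difference}), together with the fact that $\phi F_j + F_{j-1} = F_{j+1} + F_{j-1} \pm \delta$ where $\delta$ is small, one checks that $\lfloor \phi F_j + F_{j-1} - 1 - \phi \rfloor = F_{j+1} + F_{j-1} - 3$ in both parities: the fractional parts arrange so that subtracting $1 + \phi \approx 2.618$ always removes exactly $3$. Finally, since $F_{j+1} + F_{j-1} - 3 \geq 1$ for all $j > 1$, the maximum with $1$ is attained by the floor term, giving $k_{F_j}^{(F_j-1)} = F_{j+1} + F_{j-1} - 3$.

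The main obstacle I anticipate is the floor computation at the end: one must be careful about the sign of $\phi F_j - F_{j+1} = (-1)^j\delta$ and track whether $\{ \phi F_j + F_{j-1} \}$ lies above or below $1+\phi - \lfloor 1+\phi\rfloor = \phi - 1$, so that the floor drops by $3$ rather than $2$ or $4$. This is a routine but slightly delicate case analysis on the parity of $j$, driven entirely by Equation~\eqref{eq:difference}. A secondary point worth stating explicitly is the justification that $l_{F_j - 1}$ equals the \emph{larger} of the two Three-Distance values, which is immediate from Corollary~\ref{cor:cor} once we know both values are $\|F\cdot(\phi-1)\|$ for convergent denominators $F$.
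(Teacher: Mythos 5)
Your proposal is correct and follows essentially the same route as the paper: Theorem~\ref{theor:kmi} combined with Proposition~\ref{prp:tdt} and Corollary~\ref{cor:cor} to identify $l_{F_j-1}=\|F_{j-2}(\phi-1)\|$, then Lemma~\ref{lem:ratio} and Equation~\eqref{eq:difference} to reduce the problem to evaluating $\lfloor \phi F_j + F_{j-1} - 1 - \phi\rfloor$. The only difference is at the final floor: where you anticipate a parity case analysis, the paper avoids it with the uniform bound $|\phi F_j - F_{j+1}| \leq 2-\phi$, which forces $\lfloor \phi F_j - \phi\rfloor = F_{j+1}-2$ regardless of the sign of $(-1)^j$.
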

\begin{proof}
  By Proposition \ref{prp:tdt} and Corollary \ref{cor:cor} the maximum size of the intervals of factors of length
  $F_j - 1$ is $\|F_{j-2}(\phi-1)\|$. Therefore, Theorem \ref{theor:kmi}, Lemma \ref{lem:ratio} and Equation
  \eqref{eq:difference} imply that
  \begin{equation*}
    k_{F_j}^{(F_j-1)} = \max\left( 1, \left\lfloor \phi F_j + F_{j-1} - 1 - \phi \right\rfloor \right).
  \end{equation*}
  Further, Equation \eqref{eq:difference} implies that
  \begin{equation*}
    \phi F_j = F_{j+1} + \frac{(-1)^j}{\phi F_j + F_{j-1}}.
  \end{equation*}
  Since $1/(\phi F_j + F_{j-1})$ is at most $2-\phi$, it follows that
  \begin{equation*}
    F_{j+1} - 2 \leq \phi F_j - \phi \leq F_{j+1} + 2 - 2\phi
  \end{equation*}
  Since $3 < 2\phi < 4$, we have that $\left\lfloor \phi F_j - \phi \right\rfloor = F_{j+1} - 2$. The claim follows.
\end{proof}

\begin{theorem}\label{the:abperFib}
  The minimum abelian period of any factor of the Fibonacci word is a Fibonacci number.
\end{theorem}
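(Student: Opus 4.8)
The plan is to show, for an arbitrary factor $v$ of the Fibonacci word, that $v$ has an abelian period that is a Fibonacci number and that is at most $\mu_v$; by Lemma \ref{lem:ap} this forces $\mu_v$ itself to be a Fibonacci number. Since every factor of length $m$ of the Fibonacci word is, by Proposition \ref{prima}, coded by the interval $L_k(m)$ containing the point $\{\rho+n\alpha\}$ for the position $n$ where it occurs, the statement reduces to a purely arithmetic claim about the quantities $\|F_j(\phi-1)\|$ and about which exponents of abelian powers of period $F_j$ are \emph{guaranteed} to occur in any window of consecutive positions. This is exactly what Lemma \ref{lem:kmi_fibonacci} provides.

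First I would fix a factor $v$ and let $m = \mu_v$ be its minimum abelian period; we may assume $m$ is not a Fibonacci number, so $F_j < m < F_{j+1}$ for some $j$. The goal is to produce a contradiction by exhibiting an abelian period $F_j \leq |v|$ of $v$. The key observation is that $v$ occurs at some position $n$ in the Fibonacci word, and $v$ is long enough to witness an abelian power of period $F_j$: indeed the abelian exponent of $v$ is $|v|/m > |v|/F_{j+1}$, and one should check, using Lemma \ref{lem:ratio} together with Theorem \ref{the:main1} (so $k_m = \lfloor 1/\|m\alpha\|\rfloor$ and $k_{F_j} = \lfloor 1/\|F_j\alpha\|\rfloor$, with $\|F_j\alpha\| \le \|m\alpha\|$ by Corollary \ref{cor:cor}), that any factor admitting an abelian decomposition of period $m$ is long enough that within it, or within a short left-extension of it, one can read an abelian power of period $F_j$. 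Here the guaranteed-exponent machinery enters: by Lemma \ref{lem:kmi_fibonacci}, $k_{F_j}^{(F_j-1)} = F_{j+1}+F_{j-1}-3 \ge 2$ for $j > 1$, meaning that in \emph{every} window of $F_j$ consecutive positions there is a position where a genuine (non-degenerate) abelian power of period $F_j$ and exponent at least $2$ begins. Combining this with the head/tail-extension result of Proposition \ref{prop:mec} (valid since $F_j$ is a convergent denominator), such an abelian power extends to an abelian repetition of period $F_j$ whose total length covers the relevant stretch of the Fibonacci word.

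The crux, then, is a length comparison: I must show that the factor $v$ of minimum abelian period $m$, $F_j < m < F_{j+1}$, is long enough to contain — as a factor, at a suitable offset within $v$ — an abelian decomposition of period $F_j$; equivalently, that $|v| \geq$ (the minimal length of a factor having abelian period $m$ but no abelian period $\le F_j$) is impossible. Concretely, a factor with abelian period $m$ has length at least $3m-2 > 3F_j$ if it is a genuine abelian repetition, or exactly $|v|$ with $|v|/m \ge 1$; one pushes on the inequality $3m - 2 > 3F_j > F_j - 1$ to see that $|v|$ spans more than $F_j-1$ consecutive starting positions, hence by the $k_{F_j}^{(F_j-1)}$ bound an abelian power of period $F_j$ and exponent $\ge 2$ starts inside this span, and by Proposition \ref{prop:mec} its head/tail extensions stay inside $v$ (or can be trimmed to do so) — yielding an abelian decomposition of $v$ of period $F_j < m$, contradicting minimality.

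I expect the main obstacle to be the bookkeeping at the \emph{boundary} cases: small $j$ (where $k_{F_j}^{(F_j-1)}$ may drop to values where the "$\ge 2$" is tight or fails), and the precise handling of the head and tail of $v$'s own abelian decomposition — one must be careful that the claimed abelian power of period $F_j$ guaranteed by the window argument actually lies within $v$ and not partly in its head or tail, and that Proposition \ref{prop:mec}'s extension (which works in the infinite word) can be restricted to $v$ without the head/tail exceeding length $F_j-1$. Handling this cleanly will likely require separating the argument according to whether $m > 2F_j$ or $F_j < m < 2F_j$, since in the latter regime $v$ is comparatively short and one leans more heavily on the exact value $k_{F_j}^{(F_j-1)} = F_{j+1}+F_{j-1}-3$ and on Lemma \ref{lem:ratio} to guarantee that even the minimal-length witness of abelian period $m$ still exceeds $F_j(\text{something})$.
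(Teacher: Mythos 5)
Your plan follows the paper's proof in outline: use the guaranteed exponent $k_{F_j}^{(F_j-1)}$ of Lemma \ref{lem:kmi_fibonacci} to locate an abelian power of period $F_j$ starting within the first $F_j$ positions of an occurrence of $v$, extend it via Proposition \ref{prop:mec} to an abelian repetition with head and tail of length $F_j-1$, and conclude with Lemma \ref{lem:ap}. As written, however, the argument has a directional error. Lemma \ref{lem:ap} yields $F_j$ as an abelian period of $v$ only if $v$ is a \emph{factor of} a word $w'$ having abelian period $F_j\le|v|$; it is of no use that $v$ \emph{contains} an abelian power of period $F_j$. Your phrases ``contain --- as a factor, at a suitable offset within $v$ --- an abelian decomposition of period $F_j$'' and ``its head/tail extensions stay inside $v$'' have the containment the wrong way around. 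The whole point of the anticipation-$(F_j-1)$ bound is to place the start of the guaranteed power at most $F_j-1$ letters after the start of $v$, so that after prepending the head of length $F_j-1$ the repetition $w'$ begins at or before $v$ does; what must then be verified is that $w'$ reaches past the \emph{right} end of $v$.

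That verification is the quantitative core of the proof, and it is missing from your sketch: the danger is not that $v$ is too short, but that it is too \emph{long}. A factor of abelian period $m$ has length at most $m(k_m+2)-2$, while in the worst case $w'$ covers only $F_j\bigl(k_{F_j}^{(F_j-1)}+1\bigr)-1$ letters to the right of the start of $v$; so one must prove $m(k_m+2)-2\le F_j\bigl(k_{F_j}^{(F_j-1)}+1\bigr)-1$. This in turn requires an upper bound on $k_m$ for $F_j<m<F_{j+1}$ (the paper establishes $\|m(\phi-1)\|>\|F_{j-2}(\phi-1)\|$ via Theorem \ref{theor:lang}, whence $k_m\le F_{j-1}+F_{j-3}$), followed by the Fibonacci inequality $F_{j+1}(F_{j-1}+F_{j-3}+2)\le F_j(F_{j+1}+F_{j-1}-2)$ for $j\ge6$ and a direct check of the small cases. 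Your inequality $3m-2>3F_j>F_j-1$ only addresses the easy lower bound on $|v|$ (ensuring the window argument applies) and cannot close the proof; without the upper bound on $k_m$ the length comparison is unresolved. A minor further slip: the definition of abelian period requires only one full block, so a factor of abelian period $m$ need only have length at least $m$, not $3m-2$.
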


\begin{proof}
Let $w$ be a factor of the Fibonacci infinite word $f$, and suppose that $w$ has an abelian period $m>0$. We will show
that $w$ has also period $F_{n}$ where $F_{n}$ is the largest Fibonacci number such that $F_n \leq m$. If $m = F_n$,
then there is nothing to prove, so we can suppose that $F_n < m < F_{n+1}$. In particular, we have that $n \geq 3$. We
will show that given a suitable occurrence of $w$ in $f$ there is an earlier occurrence of an abelian repetition $w'$
of period $F_n$ such that $w$ is a factor of $w'$. The conclusion follows then from Lemma \ref{lem:ap}.

Suppose that $w$ occurs in $f$ at position $i$. By Theorem \ref{theor:kmi} there is an abelian power of period $F_n$ of
length $F_n k_{F_n}^{(F_n - 1)}$ starting at position $i + j$ for some $j$ such that $0 \leq j \leq F_n - 1$. By
Proposition \ref{prop:mec} this abelian power can be extended to an abelian repetition with maximum head and tail
length $F_n - 1$, so we only need to ensure that this abelian repetition is long enough to have $w$ as a factor. Since
$w$ has length at most $m(k_m + 2) - 2$, we thus need to establish that
\begin{equation*}
  m(k_m + 2) - 2 \leq F_n(k_{F_n}^{(F_n - 1)} + 1) - 1.
\end{equation*}
By Lemma \ref{lem:kmi_fibonacci} this inequality holds if and only if the inequality
\begin{equation}\label{eq:equiv1}
  m(k_{m}+2) \leq F_n(F_{n+1} + F_{n-1} - 2) + 1
\end{equation}
is satisfied. The rest of the proof consists of showing that \eqref{eq:equiv1} holds.

First we derive the following upper bound on $m(k_m + 2)$:
\begin{equation}\label{eq:tech1}
  m(k_{m}+2) < F_{n+1}(F_{n-1}+F_{n-3}+2).
\end{equation}
Let us first show that $\|m(\phi-1)\|>\|F_{n-2}(\phi-1)\|$. By Theorem \ref{theor:lang} we have
$$\|F_{n-2}(\phi-1)\|=\min_{i< F_{n-1}}\|i(\phi-1)\|.$$ Equation \eqref{eq:difference} implies that either
$\{F_n(\phi-1)\}<1/2$ and $\{F_{n+1}(\phi-1)\}>1/2$ or $\{F_n(\phi-1)\}>1/2$ and $\{F_{n+1}(\phi-1)\}<1/2$. Suppose
first that $\{m(\phi-1)\}<1/2$. If $\{F_n(\phi-1)\}<1/2$, then we have
 \begin{eqnarray*}
 \|m(\phi-1)\|&=&\|m(\phi-1)\|-\|F_{n}(\phi-1)\|+\|F_{n}(\phi-1)\| \\ 
&=&  \|(m - F_{n})(\phi-1)\|+\|F_{n}(\phi-1)\| \\
&=&  \|(F_{n}-m)(\phi-1)\|+\|F_{n}(\phi-1)\| \\
&>&  \|F_{n-2}(\phi-1)\|+\|F_{n}(\phi-1)\| \\
&>&  \|F_{n-2}(\phi-1)\|.
 \end{eqnarray*}
If instead $\{F_n(\phi-1)\}>1/2$, then we can apply the same manipulation with $F_{n+1}$ in place of $F_n$. Indeed,
since the difference of $F_{n+1}$ and $F_n$ is $F_{n-1}$ and since $F_n<m<F_{n+1}$, we have that $F_{n+1} -m <F_{n-1}$,
so we can still apply Theorem \ref{theor:lang} to derive that $\|(F_{n+1}-m)(\phi-1)\| \geq \|F_{n-2}(\phi-1)\|$. The
case $\{m(\phi-1)\}>1/2$ is symmetric. Thus, we have shown that $\|m(\phi-1)\|>\|F_{n-2}(\phi-1)\|$.
Therefore, by Equation \eqref{eq:difference} we have $k_m < \phi F_{n-2} + F_{n-3}$. Again, by applying Equation
\eqref{eq:difference} as in the proof of Lemma \ref{lem:kmi_fibonacci}, we obtain that
$k_m \leq F_{n-1} + F_{n-3}$. As $m < F_{n+1}$, inequality \eqref{eq:tech1} follows.

By the inequality \eqref{eq:tech1}, in order to establish the inequality \eqref{eq:equiv1}, it is sufficient to
show that
\begin{equation*}
  F_{n+1}(F_{n-1}+F_{n-3}+2) \leq F_n(F_{n+1} + F_{n-1} - 2).
\end{equation*}
This inequality, however, is easily seen to be true whenever $F_{n-1}+F_{n-3}+2 \leq F_n$, that is, when $n \geq 6$.
By a direct computation it can be seen that the above inequality holds also for $n = 5$. Suppose then that $n = 4$. We
proved above that $k_m \leq F_{n-1} + F_{n-3} = 4$. Plugging the estimates $k_m \leq 4$ and $m \leq 7$ into
\eqref{eq:equiv1} shows that the conclusion holds also in this case. Suppose finally that $n = 3$, that is, $m = 4$.
Now $k_m = 2$, and a direct substitution to the inequality \eqref{eq:equiv1} shows that the conclusion holds. This ends
the proof.
\end{proof}

\begin{corollary}\label{cor:prefix}
The minimum abelian period of any prefix of the Fibonacci word is a Fibonacci number.
\end{corollary}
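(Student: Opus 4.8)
The plan is to observe that Corollary~\ref{cor:prefix} is an immediate specialization of Theorem~\ref{the:abperFib}. Indeed, every prefix of the Fibonacci word $f$ is in particular a factor of $f$, so the statement that the minimum abelian period of any factor of $f$ is a Fibonacci number applies verbatim to prefixes. First I would note that Theorem~\ref{the:abperFib} is stated for all factors with no restriction on their length or position of occurrence, so nothing further needs to be verified.

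There is essentially no obstacle here: the content has already been carried out in the proof of Theorem~\ref{the:abperFib}. If one wished to give a self-contained argument for prefixes rather than citing the theorem, one could rerun that proof with the occurrence position $i = 0$ fixed; this is in fact slightly easier, since for a prefix $w$ of abelian period $m$ with $F_n < m < F_{n+1}$ one can directly invoke Theorem~\ref{theor:kmi} to obtain an abelian power of period $F_n$ starting within the first $F_n$ positions, extend it by Proposition~\ref{prop:mec} to an abelian repetition of period $F_n$ whose length, by Lemma~\ref{lem:kmi_fibonacci}, dominates $m(k_m+2)-2 \geq |w|$, and conclude via Lemma~\ref{lem:ap} that $w$ has abelian period $F_n$. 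But since all of this is already subsumed by Theorem~\ref{the:abperFib}, the corollary requires only the one-line observation above.
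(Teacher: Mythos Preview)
Your proposal is correct and matches the paper's approach exactly: the paper states Corollary~\ref{cor:prefix} with no proof, treating it as an immediate consequence of Theorem~\ref{the:abperFib} since every prefix is a factor.
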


\begin{remark}
  Theorem \ref{the:abperFib} does not generalize to hold for every Sturmian word. Consider for instance Sturmian words
  of angle $\alpha = [0;\overline{2,1}] = (\sqrt{3}-1)/2$. It can be verified that the factor
  $$\sa{aabab} \cdot \sa{aabaabaababaabaabaababaabaabaa} \cdot \sa{babaa}$$ starting at position $35$ of
  $s_{\alpha,\alpha}$ is an abelian repetition of minimum period $6$ with maximum head and tail length. However, the
  number $6$ is not a denominator of a convergent of $\alpha$ since the sequence of convergents starts
  $0,1/2,1/3,3/8,\ldots$
\end{remark}

Recall that the (finite) Fibonacci words are defined by $f_{0}=\sa{b}$, $f_{1}=\sa{a}$ and $f_{j+1}=f_{j}f_{j-1}$ for every $j > 1$. Hence, for every $j\geq 0$, we have $|f_{j}|=F_{j}$.

From Corollary \ref{cor:prefix}, we know that every finite Fibonacci word has an abelian period that is a Fibonacci number. The following theorem, stated without proof in \cite{FiLaLeLeMiPG13}, provides an explicit formula for the minimum abelian period of the finite Fibonacci words.

\begin{theorem}
For $j \geq 3$, the minimum abelian period of the word $f_j$ is the $n$th Fibonacci number $F_n$, where  
$$n =
\begin{cases}
 \lfloor{j/2}\rfloor &  \mbox{ if $j = 0, 1, 2\mod{4}$;}\\
 \lfloor{j/2}\rfloor +1&  \mbox{ if  $ j  = 3\mod{4}$.}
\end{cases}
 $$
\end{theorem}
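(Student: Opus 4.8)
The plan is to compute the minimum abelian period of $f_j$ directly from Corollary~\ref{cor:prefix}, which already tells us that the minimum abelian period of $f_j$ is some Fibonacci number $F_n$; the task is only to pin down $n$. First I would reformulate the problem: $F_n$ is an abelian period of $f_j$ if and only if $f_j$ admits an abelian decomposition $f_j = u_0 u_1 \cdots u_{k-1} u_k$ with $|u_r| = F_n$ for $1 \le r \le k-1$ and head/tail Parikh vectors contained in the common one. Since $|f_j| = F_j$, writing $F_j = q F_n + s$ with $0 \le s < F_n$, the tail has length $s$ (or $s + F_n$ if we shrink the number of full blocks by one) and the head length is then forced; so the existence of such a decomposition is equivalent to the existence of a starting offset within the prefix of $f_j$ from which an abelian power of period $F_n$ and exponent $q$ (or $q-1$) can be read, with the residual head short enough. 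This is exactly the quantity controlled by the guaranteed exponent with anticipation: $F_n$ is an abelian period of $f_j$ essentially when $\lp(F_n) \ge F_j$ after accounting for the head-length budget, i.e. when the longest prefix of $f$ that is an abelian repetition of period $F_n$ is at least as long as $f_j$ (with a small correction because a prefix of $f_j$ need not have its tail reach all the way to position $F_j$). So the core inequality to analyze is a comparison between $\lp(F_n)$ from Theorem~\ref{pro:longest_prefix} and $F_j$, together with the head-budget correction.

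Concretely, I would proceed in the following steps. (1) Establish the "if" direction: show that for the claimed value of $n$, namely $n = \lfloor j/2 \rfloor$ when $j \equiv 0,1,2 \bmod 4$ and $n = \lfloor j/2 \rfloor + 1$ when $j \equiv 3 \bmod 4$, one has $F_n \le F_j \le \lp(F_n)$ (up to the head correction), using the closed forms $\lp(F_n) = F_n(F_{n+1}+F_{n-1}+1) - 2$ for $n$ even and $F_n(F_{n+1}+F_{n-1}) - 2$ for $n$ odd, together with the identity $F_n(F_{n+1}+F_{n-1}) = F_n \cdot L_n$ (a Lucas number) and the standard product formulas $F_n L_n = F_{2n}$, $F_{n}F_{n+1} + F_{n-1}F_n = \ldots$; the point is that $\lp(F_n)$ grows like $\sqrt5\,F_n^2 \approx F_n \cdot F_{n+?}$, and one checks by a short Fibonacci-identity computation that this just barely exceeds $F_j$ for the indicated $n$ and just barely fails for $F_{n+1}$. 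This is where the residue of $j$ modulo $4$ enters: the parity of $n$ controls which of the two $\lp$ formulas applies, and the $+1$ in the odd branch is precisely the correction that makes the inequality go the right way. (2) Establish the "only if" direction: show that $F_{n-1}$ (the next smaller Fibonacci number) is \emph{not} an abelian period of $f_j$, by showing $\lp(F_{n-1}) < F_j$ in the relevant sense — again a finite Fibonacci-identity check — and invoking Corollary~\ref{cor:prefix} to conclude that no non-Fibonacci number smaller than $F_n$ can be an abelian period either. (3) Handle the head-length correction carefully: a prefix of $f$ of length $\lp(F_n)$ has head and tail both of length $F_n - 1$, but when we ask for $f_j$ as a prefix we are not obliged to use a full tail, so I would phrase the criterion as: $F_n$ is an abelian period of $f_j$ iff $f_j$ can be covered by an abelian repetition prefix of period $F_n$, i.e. iff there is an occurrence of an abelian power of period $F_n$ starting at some position $< F_n$ whose length plus a head of length $\le F_n - 1$ reaches $F_j$ — and this is governed by $k_{F_n}^{(F_n-1)}$ via Theorem~\ref{theor:kmi} and Lemma~\ref{lem:kmi_fibonacci}, which gives $k_{F_n}^{(F_n-1)} = F_{n+1}+F_{n-1}-3$.

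I expect the main obstacle to be the bookkeeping in Step (3) and the boundary interplay between the head budget and the exact length $F_j$: the inequalities $F_n \le F_j \le \lp(F_n)$ are tight — they hold with almost no slack for the correct $n$ and fail with almost no slack for $n\pm1$ — so the argument must track the $-2$ in the $\lp$ formula, the $F_n-1$ head allowance, and the exact residues $F_j \bmod F_n$ with care, and must separately verify the small cases $j = 3,4,5$ (and perhaps up to $j=8$ or so) by direct computation, exactly as in the proof of Theorem~\ref{the:abperFib}. Once the criterion "$F_n$ is an abelian period of $f_j$ iff $F_j \le \lp(F_n) + (F_n - 1) - (F_n - 1) = \lp(F_n)$" (or its precise corrected form) is pinned down, the rest reduces to the purely arithmetic lemma that, among Fibonacci numbers, $F_n$ is the smallest one $m$ with $\lp(m) \ge F_j$, and that this smallest $n$ is given by the stated case distinction on $j \bmod 4$. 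That arithmetic lemma I would prove by substituting the Fibonacci recurrences and the Binet-type growth rate $\lp(F_n) \sim \sqrt5 F_n^2$, comparing with $F_j \sim \phi^{j}/\sqrt5$, to see that the threshold sits between $j = 2n-1$ and $j = 2n+1$ with the parity of $n$ deciding which side of $2n$ is included.
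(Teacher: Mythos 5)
Your proposal is correct and follows essentially the same route as the paper: reduce via Corollary~\ref{cor:prefix} and Lemma~\ref{lem:ap} to finding the smallest $n$ with $\lp(F_n)\geq F_j$, then settle the tight inequalities with the identity $F_n(F_{n+1}+F_{n-1})=F_{2n+1}$ (your $F_nL_n=F_{2n}$ in standard indexing), checking both the claimed $n$ and the failure at $n-1$. The head/tail bookkeeping you worry about in Step (3) collapses to exactly the clean criterion $F_j\leq\lp(F_n)$ that the paper uses, so no correction term is needed.
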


\begin{proof}
From Corollary \ref{cor:prefix} using Theorem \ref{pro:longest_prefix}, it is sufficient to find the smallest integer
$n$ such that $\lp(F_{n})$ is greater than or equal to $F_{j}$. In other words, we need to find the smallest integer
$n$ such that
\begin{equation*}
  F_n \left(F_{n+1} + F_{n-1} + \gamma \right) - 2 \geq F_j
\end{equation*}
where $\gamma$ equals $1$ if $n$ is even and $0$ if $n$ is odd.

We need the following well-known formula:
\begin{equation}\label{eq:fibo_2j}
  F_j(F_{j+1} + F_{j-1}) = F_{2j+1}.
\end{equation}
This identity follows easily from the matrix identity
\begin{equation*}
  \begin{pmatrix}
    1 &1 \\
    1 &0
  \end{pmatrix}^j
  = \begin{pmatrix}
      F_j     & F_{j-1} \\
      F_{j-1} & F_{j-2}
    \end{pmatrix}
\end{equation*}
and the fact that $A^n A^m = A^{n+m}$ for a matrix $A$.

It is now straightforward to verify the claim using Equation \eqref{eq:fibo_2j}. We will prove the claim in the case
that $j = 2 \mod 4$; the other cases are similar. Choose $n = \lfloor j/2 \rfloor$. Now $n$ is odd and $2n+1 = j+1$, so
by the Equation \eqref{eq:fibo_2j} we need to verify that $F_{j+1} - 2 \geq F_j$, which is clearly true. Choose then
$n = \lfloor j/2 \rfloor - 1$. Then $n$ is even and $2n + 1 = j - 1$, so $F_{2n+1} + F_n - 2 \geq F_j$ if and only if
$F_{\lfloor j/2 \rfloor - 1} - 2 \geq F_{j-2}$. This latter inequality, however, cannot hold as $F_{4k} \geq F_{2k}$
for all $k \geq 0$. This shows that the value $\lfloor j/2 \rfloor$ is minimal in this case.
\end{proof}

\begin{example}
The minimum abelian period of the word $f_{4}=\sa{abaab}$ is $2=F_{2}=F_{\lfloor{4/2}\rfloor}$, since $f_{4}=\sa{a}\cdot \sa{ba} \cdot \sa{ab}$; the minimum abelian period of $f_{5}=\sa{a}\cdot\sa{ba}\cdot\sa{ab}\cdot\sa{ab}\cdot \sa{a}$ is $2=F_{2}=F_{\lfloor{5/2}\rfloor}$; the minimum abelian period of $f_{6}=\sa{aba}\cdot\sa{aba}\cdot\sa{baa}\cdot\sa{baa}\cdot\sa{b}$ is $3=F_{3}=F_{\lfloor{6/2}\rfloor}$; the minimum abelian period of $f_{7}=\sa{abaab}\cdot\sa{abaab}\cdot\sa{aabab}\cdot\sa{aabab}\cdot\sa{a}$ is $5=F_{4}=F_{1+\lfloor{7/2}\rfloor}$.
In Table~\ref{tab:val} we report the minimum abelian periods of the first Fibonacci words. 
\end{example}

\begin{table}
\centering  
\begin{small}
\begin{raggedright}
\begin{tabular}{c *{30}{@{\hspace{2.1mm}}l}}
 $j$\hspace{2mm}  & 3\hspace{1ex} & 4\hspace{1ex} & 5\hspace{1ex} & 6\hspace{1ex} & 7\hspace{1ex} & 8\hspace{1ex} & 9\hspace{1ex} & 10 & 11 & 12 & 13 & 14 & 15 & 16 \\
 \hline \rule[-6pt]{0pt}{22pt}
abelian period~of $f_{j}$\hspace{2mm}  & $F_{2}$ & $F_{2}$ & $F_{2}$ & $F_{3}$ & $F_{4}$ & $F_{4}$ & $F_{4}$ & $F_{5}$ & $F_{6}$ & $F_{6}$ & $F_{6}$ & $F_{7}$ & $F_{8}$ & $F_{8}$\\
\hline \rule[-2pt]{0pt}{12pt}
\end{tabular}
\end{raggedright}\caption{\label{tab:val}The minimum abelian periods of the Fibonacci words $f_{j}$ for $j=3,\ldots, 16$.}
\end{small}
\end{table}

\subsection*{Acknowledgements}

This work has been partially supported by Italian MIUR Project PRIN~2010LYA9RH, ``Automi e Linguaggi Formali: Aspetti Matematici e Applicativi''.

\end{document}